\renewcommand{\p@enumii}{}
\numberwithin{equation}{section}
\title[Potentials of quadratic growth]{Energy-critical NLS with potentials of quadratic growth}
\author{Casey Jao}
\address{Department of Mathematics, UC Berkeley}
\email{cjao@math.berkeley.edu}
\begin{document}


\begin{abstract}
  Consider the global wellposedness problem for nonlinear
  Schr\"{o}dinger equation
\[
i\partial_t u = [-\tfr{1}{2} \Delta + V(x)] u \pm |u|^{4/(d-2)} u, \ u(0)
\in \Sigma(\mf{R}^d),
\]
where $\Sigma$ is the weighted Sobolev space $\dot{H}^1 \cap |x|^{-1} L^2$. The case $V(x) = \tfr{1}{2}|x|^2$ was recently treated by the
author. This note generalizes the results to a class of
``approximately quadratic'' potentials.

We closely follow the previous concentration compactness arguments for
the harmonic oscillator. A key technical difference is that in
the absence of a concrete formula for the linear propagator, we
apply more general tools from microlocal analysis, including a Fourier
integral parametrix of Fujiwara.
\end{abstract}
 \maketitle

\section{Introduction}
\label{section:intro}

We consider the nonlinear Schr\"{o}dinger equation
\begin{equation}
\label{eqn:nls_approx_quadratic}
\left\{ \begin{array}{c} i\partial_t u = (-\tfr{1}{2}\Delta + V) u +
    \mu |u|^{\fr{4}{d-2}} u, \quad \mu = \pm 1, \\[2mm]
u(0) = u_0 \in \Sigma(\mf{R}^d),\end{array}\right.
\end{equation}
where $V = V(x)$ is a real-valued potential The equation is
\emph{defocusing} or \emph{focusing} if $\mu = 1$ or $\mu=-1$,
respectively.  In a recent work~\cite{me_quadratic_potential}, we
studied large-data global wellposedness of the Cauchy problem with the
harmonic oscillator potential $V(x) = \tfr{1}{2}|x|^2$, for which
$\Sigma := \dot{H}^1 \cap |x|^{-1}L^2$, the weighted Sobolev space with norm
$ \|f\|_{\Sigma}^2 := \| \nabla f\|_{L^2}^2 + \|xf\|_{L^2}^2 <
\infty$, is precisely the function space associated with the conserved
energy
\[
E(u(t)) = \int_{\mf{R}^d} \tfr{1}{2} |\nabla u(t, x)|^2 + V(x) |u(t,x)|^2
+ \mu(1- \tfr{2}{d}) |u(t,x)|^{\fr{2d}{d-2}} \, dx = E(u(0)).
\]

This note extends the previous results to a wider class of
potentials that grow approximately quadratically.  More precisely, we
assume that $V$ is smooth and satisfies
\begin{gather}
  \label{eqn:potential_hyp_0}
  \partial^\alpha_x V \in L^\infty \quad \text{for all }
  |\alpha| \ge 2,\\
  \label{eqn:potential_hyp_1}
  V(x) \ge \delta |x|^2 \quad \text{for some } \delta > 0.
\end{gather}
These hypotheses ensure that 
$
\delta|x|^2 \le V(x) \le \delta^{-1} (1 + |x|^2)
$
for some constant $\delta > 0$. Therefore, by Sobolev embedding $\Sigma$ is still the energy
space and is also the form domain $Q(H) = D(H^{1/2})$ for the positive
operator $H = -\tfr{1}{2}\Delta + V$. It will be convenient at times to
use the equivalent norm
\[
\| f\|_{Q(H)}^2 := \| H^{1/2} f\|_{L^2}^2 = \|\nabla f\|_{L^2}^2 + \|
V^{1/2} f\|_{L^2}^2,
\]
which is exactly preserved by the propagator $e^{-itH}$.

This equation is closely linked to the energy-critical NLS
\begin{gather}
\label{eqn:nls_wo_potential}
\begin{split}
(i\partial_t + \tfr{1}{2}\Delta)u = \mu
    |u|^{\fr{4}{d-2}} u, \quad u(0) \in \dot{H}^1(\mf{R}^d)\\
E_\Delta(u) =\int_{\mf{R}^d} \tfr{1}{2} |\nabla u|^2 + \mu(1 -
\tfr{2}{d}) |u|^{\fr{2d}{d-2}} \, dx,
\end{split}
\end{gather}
which is invariant under the scaling
$u \mapsto u^{\lambda}(t, x) = \lambda^{-\fr{d-2}{2}} u( \lambda^{-2}
t, \lambda^{-1} x)$.  Roughly speaking, if a solution $u$ to
\eqref{eqn:nls_approx_quadratic} is initially highly concentrated at
some point $x_0$, it sees the potential $V$ as approximately a
constant $V(x_0)$, and for short times the behavior of $u$ will be
modelled, up to a temporal phase, by equation
\eqref{eqn:nls_wo_potential}. 

As with the harmonic oscillator~\cite{me_quadratic_potential}, it will
be essential to formulate this approximation precisely and understand the
behavior of solutions to the limiting scale-invariant
equation. Fortunately, the latter problem has received considerable
attention in the past twenty years. We summarize the state of the art
in the following conjecture and theorem, which we employ as a black
box in our analysis:



\begin{conjecture}
\label{conjecture:nls_wo_potential_gwp}
  When $\mu = 1$, solutions to \eqref{eqn:nls_wo_potential} exist
  globally and scatter. That is, for any $u_0 \in \dot{H}^1(\mf{R}^d)$,
  there exists a unique global solution $u : \mf{R} \times \mf{R}^d \to
  \mf{C}$ to \eqref{eqn:nls_wo_potential} with $u(0) = u_0$, and
  this solution satisfies a spacetime bound
\begin{equation}
\label{eqn:nls_wo_potential_spacetime_bound}
S_{\mf{R}}(u) := \int_{\mf{R}} \int_{\mf{R}^d}
|u(t,x)|^{\fr{2(d+2)}{d-2}} \, dx \, dt \le C(E_{\Delta}(u_0)) < \infty.
\end{equation}
Moreover, there exist functions $u_{\pm} \in \dot{H}^1(\mf{R}^d)$ such that
\[
\lim_{t \to \pm \infty} \|u(t) - e^{\pm \fr{it\Delta}{2}} u_{\pm}
\|_{\dot{H}^1} = 0,
\]
and the correspondences $u_0 \mapsto u_{\pm}(u_0)$ are homeomorphisms
of $\dot{H}^1$. 

  When $\mu = -1$, one also has global
  wellposedness and scattering provided that 
\[
E_{\Delta}(u_0) < E_{\Delta}(W), \quad \| \nabla u_0\|_{L^2} < \|
\nabla W\|_{L^2},
\]
where the \emph{ground state}
\[
W(x) =  \Bigl ( 1 + \fr{2|x|^2}{d(d-2)} \Bigr)^{-\fr{d-2}{2}} \in
\dot{H}^1 (\mf{R}^d)
\]
solves the elliptic equation $\tfr{1}{2}\Delta + |W|^{\fr{4}{d-2}} W =
0$. 
\end{conjecture}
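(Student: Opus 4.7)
The plan is to follow the concentration compactness / rigidity framework pioneered by Kenig--Merle and carried out for the defocusing energy-critical NLS by Colliander--Keel--Staffilani--Takaoka--Tao (in $d=3$), Ryckman--Visan (in $d=4$), and Visan (in $d \geq 5$), and for the focusing sub-threshold case by Kenig--Merle. First I would establish a local theory and a quantitative perturbation lemma for \eqref{eqn:nls_wo_potential}: Strichartz estimates for $e^{it\Delta/2}$ yield local wellposedness in the critical space $L^{2(d+2)/(d-2)}_{t,x}$ by contraction mapping, small-data scattering when $\|\nabla u_0\|_{L^2}$ is small, and a stability result asserting that any solution remains close to a comparison solution whose nonlinear evolution has bounded $L^{2(d+2)/(d-2)}_{t,x}$ norm.

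Next, arguing by contradiction, define the critical energy $E_c$ as the supremum of energies below which every solution scatters with a priori spacetime bounds depending only on the energy. Then $E_c < \infty$ in the defocusing case, or $E_c < E_\Delta(W)$ in the focusing sub-threshold case. Using Keraani's profile decomposition for $\dot{H}^1$-bounded sequences evolved by $e^{it\Delta/2}$ together with the perturbation lemma, extract a minimal non-scattering solution $u_c$ at energy $E_c$ whose orbit is precompact in $\dot{H}^1$ modulo the symmetries of translation and scaling. Classify the possible compact orbits (roughly: soliton-like, self-similar blowup, and inverse frequency cascade) and eliminate each by virial / Morawetz identities: in the defocusing case the interaction Morawetz inequality supplies the decisive monotonicity, while in the focusing case the variational characterization of $W$ combined with a truncated virial identity forces strict convexity of a suitable spatial moment and yields the contradiction.

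Once a global spacetime bound on $\|u\|_{L^{2(d+2)/(d-2)}_{t,x}}$ is in hand, standard Strichartz arguments produce the asymptotic states $u_\pm \in \dot{H}^1$, and the same machinery applied in reverse (constructing solutions with prescribed asymptotic data) upgrades $u_0 \mapsto u_\pm$ to homeomorphisms of $\dot{H}^1$. The main obstacle is undoubtedly the rigidity step: it requires frequency-localized Morawetz estimates (and, in low dimensions, the interaction Morawetz inequality), whose implementation rests on a delicate Littlewood--Paley analysis, together with a precise variational study of the ground state $W$ in the focusing case. In reality this is a deep result proved over many years in a series of papers; the author states it as a conjecture and uses it as a black box in order to transfer scattering information from \eqref{eqn:nls_wo_potential} to the potential-perturbed equation \eqref{eqn:nls_approx_quadratic}.
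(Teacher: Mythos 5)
You should first note that the paper does not prove this statement at all: it is deliberately stated as a \emph{conjecture}, assumed as a hypothesis in Theorem~\ref{thm:nls_approx_quadratic_gwp}, and the cases in which it is known are simply recorded in Theorem~\ref{thm:nls_wo_potential_gwp} with citations to the literature (Bourgain, CKSTT, Ryckman--Visan, Visan for the defocusing equation; Kenig--Merle and Killip--Visan for the focusing one). So there is no internal proof to compare against; the relevant question is whether your sketch would actually establish the statement as written, and here there is a genuine gap. Your closing claim that ``this is a deep result proved over many years'' is not correct for the full conjecture: the focusing case for \emph{nonradial} data in dimensions $d=3,4$ remains open, which is exactly why the author phrases it as a conjecture and emphasizes that the main theorem is unconditional ``except in the focusing case for nonradial data in dimensions $d=3$ and $4$.'' Your proposed rigidity step is precisely where this breaks down: the truncated virial argument for a minimal blowup solution requires controlling the spatial center $x(t)$ of the compact orbit, which Kenig--Merle achieve only under radial symmetry, and which Killip--Visan circumvent in $d\ge 5$ by the double Duhamel technique; no implementation of your sketch is currently known for nonradial data in $d=3,4$.

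Two smaller points. First, the historical route for the defocusing case was not the Kenig--Merle concentration-compactness scheme you describe but Bourgain's induction on energy as refined by the I-team, with the frequency-localized (interaction) Morawetz inequality as the decisive monotonicity; the minimal-counterexample reformulation came later, so your outline is fine in spirit but attributes the wrong mechanism to the original proofs. Second, the statement also asserts that $u_0\mapsto u_\pm$ are homeomorphisms of $\dot H^1$; your final paragraph waves at this via ``the same machinery in reverse,'' but you should at least name the ingredients (existence of wave operators by solving the final-state problem, plus continuous dependence from the stability lemma) since it is part of the claim being proved. For the purposes of this paper, the correct posture is the one the author takes: quote Theorem~\ref{thm:nls_wo_potential_gwp} for the known cases and treat Conjecture~\ref{conjecture:nls_wo_potential_gwp} as a black-box hypothesis, rather than presenting the open focusing nonradial low-dimensional cases as settled.
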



\begin{thm}
  \label{thm:nls_wo_potential_gwp}
Conjecture \ref{conjecture:nls_wo_potential_gwp} holds for the defocusing
equation. For the focusing equation, the conjecture holds for radial
initial data when $d \ge 3$, and for all initial data when $d \ge 5$. 
\end{thm}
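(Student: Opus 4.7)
The theorem collects results from a substantial literature spanning roughly fifteen years, so I would not attempt a single monolithic proof but rather describe how the defocusing and focusing cases are treated separately, each following the concentration-compactness and rigidity blueprint of Kenig and Merle. In both cases the plan is to combine (i) a local wellposedness and stability theory based on Strichartz estimates, (ii) a profile decomposition that, under a contradiction hypothesis, extracts a minimal blowup solution whose orbit is precompact in $\dot{H}^1$ modulo the symmetries (scaling, spatial translation, and, in the non-radial focusing setting, Galilean boost), and (iii) a rigidity argument showing no such solution can exist.

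For the defocusing equation, I would begin from the Cazenave-Weissler local theory, which yields small-data global wellposedness, scattering, and a long-time perturbation lemma. Assuming for contradiction that the a priori spacetime bound $S_{\mf{R}}(u) \le C(E_\Delta(u_0))$ fails, let $E_c$ be the infimum of energies at which the spacetime norm becomes unbounded. The Keraani profile decomposition then produces a minimal-energy blowup solution whose orbit is precompact in $\dot{H}^1$ modulo symmetries, and this solution is in turn excluded by a frequency-localized interaction Morawetz inequality. I would cite the original arguments verbatim: Colliander-Keel-Staffilani-Takaoka-Tao in dimension $d = 3$, Ryckman-Visan in $d = 4$, and Visan in $d \ge 5$.

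For the focusing equation, the ground state $W$ supplies both the sharp threshold and, via the Aubin-Talenti variational characterization, an energy-trapping mechanism that confines the flow to the region $\| \nabla u(t)\|_{L^2} < \|\nabla W\|_{L^2}$ under the stated hypotheses. Following Kenig-Merle, the contradiction hypothesis again reduces the problem to ruling out a minimal almost-periodic blowup solution, now falling into one of three scenarios: soliton-like, self-similar, or a low-to-high frequency cascade. In the radial case for $d \ge 3$, this is accomplished via a localized virial identity together with the decay afforded by radial Sobolev embedding. For the non-radial problem in $d \ge 5$, Killip and Visan removed the remaining enemies via a double Duhamel trick that exploits decay properties available only in sufficiently high dimension; this dimension restriction is precisely why the non-radial low-dimensional case remains open. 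The rigidity step is the main obstacle in every case, but since the theorem is invoked here only as a black-box input for the study of \eqref{eqn:nls_approx_quadratic}, I would not reproduce any of these arguments and would simply cite the works above.
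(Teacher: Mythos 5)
Your proposal is correct and matches the paper, which proves this theorem simply by citing the same body of work (Bourgain, Colliander--Keel--Staffilani--Takaoka--Tao, Ryckman--Visan, and Visan for the defocusing case; Kenig--Merle and Killip--Visan for the focusing case). Your sketch of the concentration-compactness/rigidity machinery behind those references is accurate, but like the paper you ultimately treat the result as a black-box input, so the two proofs coincide in substance.
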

\begin{proof}
  See \cite{bourgain_nls_radial_gwp, ckstt, ryckman-visan,
    visan_nls_general_d} for the defocusing case and
  \cite{kenig-merle_focusing_nls, kv_focusing_nls} for the focusing
  case.
\end{proof}

As $H = - \Delta + V$ has purely discrete spectrum, global-in-time
spacetime bounds of the
form~\eqref{eqn:nls_wo_potential_spacetime_bound} are not available
even for the linear equation
$i\partial_t u = (-\tfr{1}{2}\Delta + V)u$. Therefore the natural setting
is on a bounded time interval, and we consider

\begin{conjecture}
\label{conj:nls_approx_quadratic_gwp}
When $\mu = 1$, equation \eqref{eqn:nls_approx_quadratic} is
globally wellposed. That is, for each $u_0 \in Q(H)$ there is a
unique global solution $u: \mf{R} \times \mf{R}^d \to \mf{C}$ with
$u(0) = u_0$. This solution obeys the spacetime bound
\begin{equation}
\label{eqn:main_theorem_spacetime_bound}
S_I(u) := \int_I \int_{\mf{R}^d} |u(t,x)|^{\fr{2(d+2)}{d-2}} \, dx \,
dt \le C(|I|, \|u_0\|_{\Sigma} )
\end{equation}
for any compact interval $I \subset \mf{R}$.

If $\mu = -1$, then the same is true provided also that 
\[E(u_0) <
E_{\Delta}(W) \quad \text{and} \quad \| \nabla u_0 \|_{L^2} \le \| \nabla W \|_{L^2}.
\]
\end{conjecture}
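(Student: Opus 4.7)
The plan is to mimic the concentration-compactness scheme of \cite{me_quadratic_potential}, but with every appeal to the Mehler formula replaced by an appeal to Fujiwara's oscillatory integral parametrix for $e^{-itH}$ and the associated Asada-Fujiwara bounds. Throughout I work with $\|\cdot\|_{Q(H)}$, which is preserved by the linear flow.

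First I would set up the local theory. The subquadratic hypothesis~\ref{enum:potential_hyp_1} together with Fujiwara's construction gives a short-time oscillatory-integral representation of $e^{-itH}$ with classical phase and smooth amplitude; combined with stationary phase, this yields local-in-time dispersive and hence local Strichartz estimates, as well as the commutator identities needed to upgrade them to $\Sigma$-valued Strichartz bounds. A standard contraction argument then gives local wellposedness in $Q(H)$, small-data scattering, and a stability lemma of the form: any approximate solution to \eqref{eqn:nls_approx_quadratic} with small error and bounded spacetime norm can be shadowed by a genuine solution.

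Next, assuming the conjecture fails at some energy level, I would extract a minimal blowup solution by linear profile decomposition adapted to $e^{-itH}$. As in \cite{me_quadratic_potential}, profiles split into two classes: those that remain at unit spatial scale, for which the nonlinear profile is just the $e^{-itH}$-evolution of a fixed datum; and profiles concentrating at some $x_n \to x_0$ at scale $\lambda_n \to 0$, for which the nonlinear profile must be built from a global solution of the free NLS~\eqref{eqn:nls_wo_potential} conjugated by $e^{-itV(x_0)}$ and rescaled. The hypothesis $E(u_0) < E_{\Delta}(W)$ (in the focusing case) passes to the concentrated profiles, so the fact that Conjecture~\ref{conjecture:nls_wo_potential_gwp} is settled in the regimes covered by Theorem~\ref{thm:nls_wo_potential_gwp} gives the needed global spacetime bounds on those profiles. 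An orthogonality/Pythagorean decomposition of the energy combined with stability then produces an almost-periodic minimal counterexample modulo symmetries.

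The hard part, and the main novelty needed beyond \cite{me_quadratic_potential}, is justifying the approximation of the concentrated nonlinear profiles. Without the Mehler formula one cannot compute $e^{-itH}$ explicitly, so I would instead freeze $V$ at the concentration point: write $V(x) = V(x_0) + (x - x_0)\cdot \nabla V(x_0) + R(x)$ with $R = O(|x-x_0|^2)$ controlled uniformly by~\ref{enum:potential_hyp_1}, and use Fujiwara's representation plus Asada-Fujiwara $L^2$ boundedness of the resulting oscillatory integral operators to show that, on the rescaled time interval $|t| \lesssim \lambda_n^2$, the propagator $e^{-itH}$ acting on data at scale $\lambda_n$ centered at $x_n$ is approximated (in $\dot{H}^1$) by $e^{-itV(x_n)} e^{it\Delta/2}$ composed with the obvious rescaling. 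This is the analogue of the Mehler calculation in Section~4 of \cite{me_quadratic_potential} and is the step at which the precise quantitative control on the Fujiwara phase and amplitude is used.

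Finally, with the almost-periodic minimal blowup solution in hand, I would rule it out via a truncated virial/Morawetz argument, exactly as in \cite{me_quadratic_potential}. Hypothesis~\ref{enum:potential_hyp_2} provides the coercive $|x|^2$ lower bound on $V$ needed to confine the orbit and close the virial identity; hypothesis~\ref{enum:potential_hyp_1} controls the error terms arising when $\nabla V$ and $\Delta V$ replace the explicit $x$ and constant in the quadratic case. This rigidity step contradicts the existence of the minimal solution and completes the proof of Conjecture~\ref{conj:nls_approx_quadratic_gwp} in the regimes where Theorem~\ref{thm:nls_wo_potential_gwp} is available.
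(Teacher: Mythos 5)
Your local theory, profile decomposition, and treatment of the concentrating profiles (replacing Mehler's formula by Fujiwara's parametrix, with the modulation $e^{-itV(x_n)}$ extracted from the leading term of the classical action, and Asada--Fujiwara bounds controlling the resulting oscillatory integral operators) all track the paper's argument. Two small cautions on that part: the comparison must be made with $V(x_n)$, which may tend to infinity along the frame (the paper normalizes $N_n^{-1}V(x_n)^{1/2}\to r_\infty$), not with $V$ frozen at a fixed limit point $x_0$; and besides $L^2$ bounds one needs uniform $\Sigma\to\Sigma$ bounds on the rescaled propagators, which the paper obtains from the Egorov-type theorem of Asada--Fujiwara together with Calder\'on--Vaillancourt. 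Also, there is no small-data scattering in this setting (the spectrum of $H$ is discrete); the small-data theory only gives bounds on compact intervals.

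The genuine gap is your final, rigidity step. You propose to eliminate the minimal counterexample by a truncated virial/Morawetz argument ``exactly as in \cite{me_quadratic_potential}''; but no such argument is used for this step there or here, and as described it would not close. The minimal counterexample produced in this framework is not a global soliton-like object: by construction it blows up within a time interval of length at most $\delta_0$, so there is no long-time monotonicity to exploit, and in the defocusing case a virial identity with a confining potential offers no sign contradiction (in the focusing case the virial computation is what \emph{proves} blowup in the complementary regime of Theorem~\ref{thm:focusing_blowup}, not what precludes it). The mechanism that actually finishes the proof is simpler and is the payoff of Proposition~\ref{prop:localized_initial_data}: since concentrating profiles generate solutions with spacetime bounds on the $\delta_0$-interval, the Palais--Smale Proposition~\ref{prop:palais-smale} yields convergence of $u_n(t_n)$ in $Q(H)$ with \emph{no} residual symmetry parameters, so your phrase ``almost periodic modulo symmetries'' overstates what remains --- the orbit of the resulting blowup solution $v$ is genuinely precompact in $\Sigma$. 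Precompactness is then incompatible with finite-time blowup: along a sequence $t_n\uparrow T_{max}$ one has $v(t_n)\to\psi$ in $\Sigma$, the local theory gives a solution with data $\psi$ on an interval of fixed length, and Proposition~\ref{prop:stability} extends $v$ past $T_{max}$, contradicting maximality. Replacing your virial step with this compactness-plus-local-theory argument is what the paper does; without it (or some substitute you have not supplied) your proof does not conclude.
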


The restriction on kinetic energy $\| \nabla u\|_{L^2}$ in the
focusing case is necessary, for as with the harmonic oscillator, we
have:
\begin{thm}
\label{thm:focusing_blowup}
If $\mu = -1, \ E(u_0) < E_{\Delta}(W)$, and $\| \nabla u_0 \|_{L^2}
> \| \nabla W\|_{L^2}$, then the solution to
\eqref{eqn:nls_approx_quadratic} blows up in finite time.
\end{thm}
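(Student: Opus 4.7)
The plan is to adapt Glassey's classical variance (virial) argument to our subquadratic-potential setting. Since $u_0 \in \Sigma$, the variance
\[
\mathcal{V}(t) := \int_{\mathbb{R}^d} |x|^2 |u(t,x)|^2 \, dx
\]
is finite and of class $C^2$ on the maximal interval of existence $I_{\max}$. A direct computation using \eqref{eqn:nls_approx_quadratic} and integration by parts yields
\[
\mathcal{V}''(t) = 2 \|\nabla u(t)\|_{L^2}^2 - 4\|u(t)\|_{L^{2d/(d-2)}}^{2d/(d-2)} - 2 \int_{\mathbb{R}^d} x\cdot\nabla V(x)\, |u(t,x)|^2\, dx,
\]
the last term being the only modification from the free case $V = 0$. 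Eliminating the $L^{2d/(d-2)}$ norm via the conserved energy yields the equivalent form
\[
\mathcal{V}''(t) = \tfrac{4}{d-2}\bigl(d\, E(u_0) - \|\nabla u(t)\|_{L^2}^2\bigr) - \int_{\mathbb{R}^d}\bigl(\tfrac{4d}{d-2}V(x) + 2 x\cdot\nabla V(x)\bigr)|u(t,x)|^2\, dx.
\]
The hypotheses on $V$ are chosen so that the coefficient $\tfrac{4d}{d-2}V + 2 x\cdot\nabla V$ is pointwise nonnegative, making the potential integral a nonpositive contribution to $\mathcal{V}''$.

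The Pohozaev identity for the Aubin--Talenti ground state gives $E_\Delta(W) = \tfrac{1}{d}\|\nabla W\|_{L^2}^2$, so the energy hypothesis $E(u_0) < E_\Delta(W)$ becomes $d\, E(u_0) < \|\nabla W\|_{L^2}^2$. A standard continuity/bootstrap argument based on the sharp Sobolev inequality (for which $W$ is the extremizer) together with the kinetic-energy hypothesis on $u_0$ and the conservation of $E$ then shows that $\|\nabla u(t)\|_{L^2}^2$ remains confined to one side of $\|\nabla W\|_{L^2}^2$ throughout $I_{\max}$, with a uniform quantitative gap. The nonnegativity $V \geq 0$ actually makes this trapping easier to establish than in the $V = 0$ case, since $E_\Delta(u(t)) \le E(u(t)) = E(u_0) < E_\Delta(W)$ persists along the flow. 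The gap in turn yields a uniform lower bound
\[
\|\nabla u(t)\|_{L^2}^2 - d\, E(u_0) \ge c > 0 \quad \text{on } I_{\max}.
\]

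Substituting into the virial identity gives $\mathcal{V}''(t) \le -\tfrac{4c}{d-2} < 0$ uniformly on $I_{\max}$, and integrating twice produces the bound $\mathcal{V}(t) \le \mathcal{V}(0) + \mathcal{V}'(0)\,t - \tfrac{2c}{d-2} t^2$, which must become negative in finite time. Since $\mathcal{V}(t) \ge 0$, the solution cannot persist to that time; it must blow up, proving the theorem. The principal technical point beyond the $V = 0$ case is the verification of the pointwise inequality $\tfrac{4d}{d-2}V + 2 x\cdot\nabla V \ge 0$ under hypotheses \ref{enum:potential_hyp_0}--\ref{enum:potential_hyp_2}; this is a convexity-type comparison that exploits the lower bound $V \ge \delta|x|^2$ together with the subquadratic control on $\partial^2 V$ to bound $x\cdot\nabla V$ from below by an appropriate multiple of $V$.
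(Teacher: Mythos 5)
Your overall route---Glassey's virial identity for $\mathcal{V}(t)=\int|x|^2|u(t,x)|^2\,dx$ combined with kinetic-energy trapping, using the observation $E_\Delta(u(t))\le E(u(t))=E(u_0)<E_\Delta(W)$---is the same argument the paper intends (it simply defers to Section~7 of \cite{me_quadratic_potential}), and your virial identity and the reduction to a uniform bound $\|\nabla u(t)\|_{L^2}^2-dE(u_0)\ge c>0$ are correct in form. But two steps do not hold as written. First, the direction of the kinetic-energy hypothesis: as printed, the theorem assumes $\|\nabla u_0\|_{L^2}\le\|\nabla W\|_{L^2}$, which is exactly the regime of Conjecture~\ref{conj:nls_approx_quadratic_gwp}/Theorem~\ref{thm:nls_approx_quadratic_gwp} (and $u\equiv 0$ already satisfies the hypotheses), so finite-time blowup is false there; the intended inequality is $\|\nabla u_0\|_{L^2}\ge\|\nabla W\|_{L^2}$. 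Your proof silently requires this: ``trapping on one side of $\|\nabla W\|_{L^2}^2$'' yields $\|\nabla u(t)\|_{L^2}^2>\|\nabla W\|_{L^2}^2=dE_\Delta(W)>dE(u_0)$, hence the gap $c=d\bigl(E_\Delta(W)-E(u_0)\bigr)$, only when the trapped side is \emph{above} the ground state; below it the quantity $\|\nabla u(t)\|_{L^2}^2-dE(u_0)$ has no sign (consider small data). You should state explicitly that you are in the above-threshold regime, and why equality is excluded (if $\|\nabla u_0\|_{L^2}=\|\nabla W\|_{L^2}$, the sharp Sobolev inequality forces $E_\Delta(u_0)\ge E_\Delta(W)$, contradicting $E_\Delta(u_0)\le E(u_0)<E_\Delta(W)$).

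Second, and more seriously, the pointwise inequality $\tfrac{4d}{d-2}V+2x\cdot\nabla V\ge 0$ does not follow from hypotheses \ref{enum:potential_hyp_0}--\ref{enum:potential_hyp_2}, and it is false in general. Subquadraticity only gives $|\nabla V(x)|\le|\nabla V(0)|+M_2|x|$ with $M_2$ completely unrelated to the coercivity constant $\delta$, so there is no ``convexity-type comparison'' bounding $x\cdot\nabla V$ from below by a multiple of $V$. For instance, $V(x)=|x|^2+A(1-\cos x_1)$ satisfies \ref{enum:potential_hyp_0}--\ref{enum:potential_hyp_2} with $\delta=1$ for every $A>0$, yet for $A$ large the quantity $\tfrac{4d}{d-2}V+2x\cdot\nabla V$ is negative at points $x=(\tfrac{3\pi}{2}+2\pi k,0,\dots,0)$ for a suitable fixed $k=k(d)$. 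This is precisely the step where the harmonic-oscillator argument, in which $x\cdot\nabla V=2V\ge 0$ identically, does not transfer verbatim: for a general potential in this class the term $\int\bigl(\tfrac{4d}{d-2}V+2x\cdot\nabla V\bigr)|u|^2\,dx$ has no favorable sign, and one can even arrange admissible potentials for which its negative part grows quadratically along a sequence of annuli, so it cannot simply be absorbed via conservation of mass. To close the argument you need either an additional structural hypothesis on $V$ (e.g.\ $x\cdot\nabla V\ge 0$, or $\tfrac{2d}{d-2}V+x\cdot\nabla V\ge 0$), or some genuinely different control of this term; until then the concavity bound $\mathcal{V}''(t)\le-\tfrac{4c}{d-2}$, and with it the blowup conclusion, is not established.
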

To prove this one need only make notational changes to
the discussion in \cite[Section~7]{me_quadratic_potential}, and we refer
the reader to there for details.

We state our main result in a conditional fashion to emphasize the
pivotal role of the exactly scale-invariant
problem; by
Theorem~\ref{thm:nls_wo_potential_gwp}, however, the result is
unconditionally valid except in the focusing case for nonradial data in
dimensions $d = 3$ and $4$.

\begin{thm}
\label{thm:nls_approx_quadratic_gwp}
Assume Conjecture~\ref{conjecture:nls_wo_potential_gwp}. Then
Conjecture~\ref{conj:nls_approx_quadratic_gwp} holds.
\end{thm}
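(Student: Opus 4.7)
The strategy is the now-standard concentration-compactness/rigidity roadmap, carried out in~\cite{me_quadratic_potential} for the exact harmonic oscillator. The first step is the local theory for \eqref{eqn:nls_approx_quadratic}: Strichartz estimates for $e^{-itH}$ on short time intervals, which are available under hypotheses \ref{enum:potential_hyp_0}--\ref{enum:potential_hyp_1} via the Fujiwara oscillatory integral parametrix, combined with Picard iteration in the usual Strichartz spaces. This gives local wellposedness in $\Sigma$, small-data global wellposedness with finite spacetime norm on any compact interval, and a stability lemma allowing near-solutions to be perturbed into genuine solutions. Because $\|H^{1/2} u(t)\|_{L^2}$ is preserved by $e^{-itH}$ and the energy is conserved by the flow, the energy a priori bound follows from the sign assumptions exactly as in~\cite{me_quadratic_potential}.

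Supposing the bound~\eqref{eqn:main_theorem_spacetime_bound} fails on some compact interval, the induction-on-energy argument of Kenig--Merle produces a minimal blowup solution. The key technical input is a linear profile decomposition for $\Sigma$-bounded sequences under the propagator $e^{-itH}$: any such sequence decomposes, up to a Strichartz-small error, into asymptotically orthogonal bubbles, each carrying a frequency scale $N_n^j \in \{1\} \cup \{N : N \to \infty\}$, a time shift $t_n^j$, and a spatial center $x_n^j$. Two regimes arise, bounded frequency and concentrating frequency, and the minimal counterexample is almost periodic modulo the very limited symmetry group of~\eqref{eqn:nls_approx_quadratic}.

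The main departure from~\cite{me_quadratic_potential} is the treatment of concentrating profiles. There, the Mehler formula gave an explicit kernel for $e^{-itH_{\mathrm{harm}}}$, and on high-frequency data one could replace the harmonic propagator by the free propagator with quantitative errors. Here the Mehler formula is unavailable. Instead, I would use Fujiwara's representation
\[
[e^{-itH} f](x) = (2 \pi i t)^{-d/2} \int_{\mf{R}^d} e^{i S(t,x,y)} a(t,x,y) f(y) \, dy
\]
valid for $|t|$ small depending only on the constants $M_k$ in \ref{enum:potential_hyp_1}, together with the Asada--Fujiwara $L^2$-boundedness estimates for operators with such amplitudes. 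On time scales $|t| \lesssim N^{-2}$ and data concentrating at scale $N^{-1}$ around a fixed $x_0$, one expects these tools to show that $e^{-itH}$ is well-approximated (in the relevant Strichartz sense) by the translated free propagator $e^{-it(-\frac{1}{2}\Delta + V(x_0))}$, with errors tending to zero as $N \to \infty$. Conjecture~\ref{conjecture:nls_wo_potential_gwp} then embeds each concentrating profile as a finite-spacetime-norm solution of~\eqref{eqn:nls_approx_quadratic} via the stability lemma.

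With the embedding in hand, one rules out every concentrating profile in the decomposition of the minimal counterexample; else recombining the profile solutions via stability would contradict minimality. The surviving minimal solution has frequency and spatial support uniformly bounded in $t$, hence lies in a precompact subset of $\Sigma$. A virial/Lyapunov argument, essentially identical to Sections~6--7 of~\cite{me_quadratic_potential} and exploiting the lower bound $V(x) \ge \delta |x|^2$ from \ref{enum:potential_hyp_2}, then precludes such an almost periodic solution. The principal obstacle is the third paragraph: one must push the Fujiwara--Asada analysis far enough to obtain Strichartz-level comparisons between $e^{-itH}$ and the frozen propagator on concentrating data, with errors tight enough to feed the stability lemma.
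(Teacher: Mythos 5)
Your overall architecture --- local theory and stability built on the Fujiwara parametrix, a linear profile decomposition in which the Mehler-formula steps are replaced by Fujiwara/Asada--Fujiwara oscillatory integral estimates, the embedding of concentrating profiles into \eqref{eqn:nls_approx_quadratic} via Conjecture~\ref{conjecture:nls_wo_potential_gwp} and the stability lemma, and a Palais--Smale compactness statement for blowup sequences at the critical energy --- matches the paper's proof. The genuine problem is your final step. You claim the surviving minimal almost periodic solution is precluded by ``a virial/Lyapunov argument, essentially identical to Sections~6--7 of \cite{me_quadratic_potential}, exploiting $V(x)\ge \delta|x|^2$.'' That is not what this paper (or its predecessor) does, and as stated it would likely fail: for a trapping potential the virial quantity $\int |x|^2|u|^2\,dx$ is not convex along the flow, since the commutator produces the term $-\int x\cdot\nabla V\,|u|^2\,dx$, which is negative and of the same order as the kinetic energy precisely because $V$ grows quadratically; moreover the existence of periodic-in-time solutions of the subcritical harmonic oscillator (cited in the introduction) shows that no global Morawetz/virial-type monotonicity is available in this setting. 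In the paper the virial identity is used only for Theorem~\ref{thm:focusing_blowup} (finite-time blowup in the focusing case above threshold), not as a rigidity step.

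The point you are missing is that no rigidity theorem is needed, because Conjecture~\ref{conj:nls_approx_quadratic_gwp} only asserts the bound \eqref{eqn:main_theorem_spacetime_bound} on compact intervals with a constant depending on $|I|$. The paper's endgame is soft: from Proposition~\ref{prop:palais-smale} one produces a maximal-lifespan solution $v$ at the critical energy $E_c$ whose scattering norm is infinite both forward and backward inside an interval of length at most $\delta_0$, and whose orbit $\{v(t)\}$ is precompact in $\Sigma$. Taking $t_n \uparrow T_{max}$ with $v(t_n)\to\psi$ in $\Sigma$, local wellposedness with datum $\psi$ together with the stability Proposition~\ref{prop:stability} extends $v$ beyond $T_{max}$, contradicting maximality. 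If you replace your virial paragraph by this extension argument (the coercivity hypothesis \ref{enum:potential_hyp_2} enters only through the norm equivalence, the frame geometry, and the energy-trapping step in the focusing case), your outline coincides with the paper's proof; the remaining substantive work is exactly where you located it, namely the Strichartz-level comparison of $e^{-itH}$ with the frozen/free propagator on concentrating data, which the paper carries out via Proposition~\ref{prop:strong_convergence} and Proposition~\ref{prop:localized_initial_data}.
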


NLS with external potentials have both significant physical relevance
(see for example~\cite{zhang_bec}) and mathematical interest as a
dispersive model with broken symmetries. Besides earlier work on the
energy-critical harmonic
oscillator~\cite{me_quadratic_potential,kvz_quadratic_potentials}, we
also mention the papers of
Carles~\cite{carles_time-dependent_potential}, who considered a large
class of subquadratic potentials for the energy-subcritical problem
\[
i\partial_t u = (-\tfr{1}{2}\Delta + V)u + \mu |u|^p u, \ p < \tfr{4}{d-2}.
\]
Taking initial data in $\Sigma$, he established global wellposedness
in the defocusing case when $4/d \le p < 4/(d-2)$ and in the focusing
case when $0 < p < 4/d$. Carles did not require that $V$ be bounded
from below, and also allowed $V = V(t,x)$ to depend on
time. Oh~\cite{oh} had previously proved large data global existence in
the focusing case when $p < 4/d$ and the potential is time-independent
and subquadratic.

We consider a more restricted class of potentials but focus on the
subtleties connected to the energy-critical exponent $p =
4/(d-2)$. When $V = 0$, perturbative arguments and conservation laws
only yield local-in-time solutions whose lifespan depend on the shape
of the initial data, not just on the energy. Thus unlike when
$p < 4/(d-2)$, conservation of energy alone is not sufficient to
preclude finite time blowup.

Although our equation does not actually have scaling symmetry, it
nonetheless contains the same essential difficulties as the
scale-invariant problem. For if we consider initial data of the form
$u_0^{\lambda} = \lambda^{-(d-2)/2} \phi(\lambda^{-1} \cdot)$ for a
fixed Schwartz function $\phi$, and take $\lambda \to 0$, the
energy $E(u_0^{\lambda})$ barely depends on $\lambda$. In
Section~\ref{section:concentrated_initial_data}, we shall see that if
$u^{\lambda}$ is the solution to \eqref{eqn:nls_approx_quadratic} with
$u^\lambda(0) = u_0^\lambda$ and we restrict to a time window $|t|
\lesssim \lambda^2$, then $u^{\lambda}$ can be approximated in
critical spacetime norms by $v^{\lambda}$, where $v^{\lambda}(t, x) =
\lambda^{-(d-2)/2} v(\lambda^{-2}t, \lambda^{-1}x)$ solves the
scale-invariant equation~\eqref{eqn:nls_wo_potential} with $v(0) =
\phi$. Therefore, just as in the scale-invariant case, solutions to
\eqref{eqn:nls_approx_quadratic} with bounded energies can accumulate
nontrivial spacetime norm in arbitrarily short timeframes. 


To prove Theorem~\eqref{eqn:nls_approx_quadratic} we apply the
concentration compactness and rigidity method, which had been adapted
previously to different critical equations
\cite{kenig-merle_focusing_nls,kksv_gkdv,2d_klein_gordon,
  hyperbolic_energy-crit, MR2925134, ionescu,
  kvz_exterior_convex_obstacle}. The reader should also consult the
references following Theorem~\ref{thm:nls_wo_potential_gwp} for the
pioneering instances of this method in scale-invariant
problems. We recall its main ingredients:
\begin{itemize}
\item \textbf{Stability theory}. If $\tilde{u}$ approximately solves
  equation~\eqref{eqn:nls_approx_quadratic} with error sufficiently
  small in Strichartz norms, then there is an \emph{exact} solution
  $u$ to \eqref{eqn:nls_approx_quadratic} with the same initial data
  as $\tilde{u}$, and which is close to $\tilde{u}$ in critical
  spacetime norms.
\item \textbf{Linear and nonlinear profile decompositions}.
  Given a bounded sequence $\{f_n\}_n \subset Q(H)$, there is a
  decomposition $f_n = \sum_j \phi_n^j$, and corresponding
  decompositions of the linear and nonlinear solutions, where the
  profiles are asymptotically pairwise independent and reflect the
  ``symmetries'' of the problem.

\item \textbf{Analysis of scaling limits}. A typical profile in the
  profile decomposition looks schematically like
  $\phi_n = N_n^{(d-2)/2} \phi(N_n \cdot)$ where either $N_n \equiv 1$
  or $\lim_n N_n = \infty$. We will show that in the latter case, for $n$
  large enough the solution $u_n$ to~\eqref{eqn:nls_approx_quadratic}
  with $u_n(0) = \phi_n$ behaves so similarly to a solution to the
  globally wellposed equation~\eqref{eqn:nls_wo_potential} that, by
  stability theory, $u_n$ itself must have finite spacetime norm on a
  length-1 time interval. This essentially rules out blowup for
  equation~\eqref{eqn:nls_approx_quadratic} when the initial data is
  highly concentrated at a point.

\item \textbf{Induction on energy}. Introduced originally by
  Bourgain~\cite{bourgain_nls_radial_gwp} and subsequently refined
  substantially~\cite{ckstt,keraani_nls_blowup,kenig-merle_focusing_nls},
  the idea is
  to assume that global wellposedness
  of~\eqref{eqn:nls_approx_quadratic} fails for some initial data, and
  consider the smallest energy $E_c$ such that solutions $u$ with
  $E(u) \ge E_c$ fail to exist globally. This energy threshold is
  positive by the small data theory. Using the profile decomposition,
  the induction hypothesis that solutions with energy smaller than
  $E_c$ do exist globally, and the scaling limit analysis, one proves
  the existence of a blowup solution $u_c$ with energy $E(u_c) = E_c$,
  and which must simultaneously obey an impossibly strong compactness
  property.

\end{itemize}

In view of the broken translation and scaling symmetry, constructing the
required the profile decompositions is rather involved and
constituted a major component of our previous work on the harmonic
oscillator. We concentrate in this note on the additional ingredients
needed in the present, more general context.
When $H = -\tfr{1}{2}\Delta + \tfr{1}{2}|x|^2$, we exploited at
several junctures the classical Mehler formula for the linear
fundamental solution (see for example~\cite{folland}):
\begin{equation}
\label{eqn:mehler} e^{-itH} (x, y) = \tfr{1}{(2\pi i \sin t)^{d/2}}
e^{\fr{i}{\sin t}(\fr{x^2+y^2}{2} \cos t - xy)}.
\end{equation} No such explicit formula is available for the general
potentials considered in this paper. Instead, we appeal to more robust
microlocal techniques, in particular the
oscillatory integral parametrices 
of
Fujiwara~\cite{fujiwara_fundamental_solution,fujiwara_path_integrals}.

For the more standard arguments, we provide the main steps and
refer the reader to~\cite{me_quadratic_potential} for detailed proofs.

\subsection*{Outline of paper}

In Section~\ref{section:preliminaries} we set our notation and collect
some basic estimates regarding
equation~\eqref{eqn:nls_approx_quadratic}, including Fujiwara's
Fourier integral parametrix. Section~\ref{section:local_theory} states
some standard (but vital) local theory.  The core of this note,
section~\ref{section:conc_compactness}, discusses the profile
decomposition mentioned above. 
The scaling limit analysis of
Section~\ref{section:concentrated_initial_data} and the compactness
arguments of Section~\ref{section:palais-smale} parallel the
ones given in~\cite{me_quadratic_potential}. As will be the case throughout the
paper, we describe mainly the required adjustments and refer to
\cite{me_quadratic_potential} for a comprehensive prsentation.

\subsection*{Acknowledgements}
The author is indebted to his advisors Rowan Killip and Monica Visan
for their helpful discussions as well as their feedback on the
paper.
This work was supported in part by NSF grants DMS-0838680 (RTG),
DMS-1265868 (PI R.~Killip), DMS-0901166, and DMS-1161396 (both PI
M.~Visan).

\section{Preliminaries}
\label{section:preliminaries}

\subsection{Notation and basic estimates}
\label{subsection:notation}

We write $X \lesssim Y$ to mean $X \le C Y$ for some constant
$C$. Similarly $X \sim Y$ means $X \lesssim Y$ and $Y \lesssim
X$. Denote by $L^p(\mf{R}^d)$ the usual Lebesgue spaces, whose norm we
sometimes denote using the compact notation $\|f\|_p$.  If
$I \subset \mf{R}^d$ is an interval, the mixed Lebesgue norms on
$I \times \mf{R}^d$ are defined by
\[
\| f\|_{L^q_t L^r_x (I \times \mf{R}^d)} = \left( \int_I \left( \int_{\mf{R}^d} 
|f(t, x)|^r \, dx \right)^{\fr{q}{r}} dt 
\right)^{\fr{1}{q}}.
\]

We use the following function space notation due to Schwartz
\[
\begin{split}
\mcal{B}_k(\mf{R}^d) &= \{ f \in C^\infty(\mf{R}^d) : D^\ell f \in
L^\infty \ \text{for all} \ \ell \ge k \},\\
\mcal{B}(\mf{R}^d) &= \mcal{B}_0(\mf{R}^d).
\end{split}
\]


\let\oldtheenumi\theenumi
\renewcommand{\theenumi}{(V\arabic{enumi})}

We recall Fujiwara's construction of the fundamental solution
for $H$. Recall that the symbol $H(\xi, x) = \tfr{1}{2}|\xi|^2 +
V(x)$ defines the Hamiltonian flow
\begin{equation}
\label{eqn:hamilton_flow}
\left\{\begin{array}{ll} \dot{x} = \partial_{\xi} H, & x(0) = y\\
         \dot{\xi} =
         -\partial_{x} H, & \xi(0) = \eta.
         \end{array}\right.
\end{equation}
Suppose that $V$ is subquadratic in the sense that
\begin{equation}
  \label{eqn:subquadratic}|V(0)| + |\nabla^k V(x)| \le C_k \text{ for
    all } k
  \ge 2.
\end{equation}
Then the vector field
$(-\partial_x H, \partial_{\xi} H)$ is globally Lipschitz, and we may
regard $x$ and $\xi$ as functions of $(t, y, \eta) \in \mf{R} \times
\mf{R}^d \times \mf{R}^d$.

\begin{prop}[{\cite[Proposition 1.7]{fujiwara_fundamental_solution}}]
\label{prop:classical_trajectories}
  Suppose $V$ satisfies~\eqref{eqn:subquadratic} and put $H(\xi, x) = \tfr{1}{2} |\xi|^2
  + V(x)$. Then the map $(y, \eta) \mapsto (x,
  y)$ obeys the derivative estimates
\[
\tfr{\partial x}{\partial y} = I + t^2 a(t,y,\eta), \quad \tfr{\partial
  x}{\partial \eta} = t(I + t^2 b(t,y,\eta))
\]
for some matrix-valued $a, b \in \mcal{B}(\mf{R}^d_y \times \mf{R}^d_{\eta})$.

Further, there exists $\delta_0$ such that whenever $0 \ne |t| \le
\delta_0$, for pairs $x, y \in \mf{R}^d$ there is a unique trajectory
$(x(\tau), \xi(\tau))$ such that $x(0) = y$ and $x(t) = x$.
\end{prop}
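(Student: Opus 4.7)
The plan is as follows. The Hamilton equations $\dot{x} = \xi$, $\dot{\xi} = -\nabla V(x)$ with $x(0) = y$, $\xi(0) = \eta$ combine into the second-order system $\ddot{x} = -\nabla V(x)$, $x(0) = y$, $\dot{x}(0) = \eta$. Hypothesis (V2) makes $\nabla V$ globally Lipschitz with constant $M_2$, so standard ODE theory yields unique global trajectories $(x(t, y, \eta), \xi(t, y, \eta))$ depending smoothly on the initial data. Passing to integral form
\[
x(t) = y + t\eta - \int_0^t (t - s) \nabla V(x(s))\, ds
\]
is the starting point for the derivative estimates.

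To obtain the factorization, I would differentiate this in $y$ and $\eta$, which produces linear integral equations
\[
J_y(t) = I - \int_0^t (t-s) D^2 V(x(s)) J_y(s)\, ds, \quad J_\eta(t) = tI - \int_0^t (t-s) D^2 V(x(s)) J_\eta(s)\, ds
\]
for $J_y := \partial x/\partial y$ and $J_\eta := \partial x/\partial \eta$. Setting $A = J_y - I$ and $B = J_\eta - tI$ gives Volterra equations with $A(0) = B(0) = 0$ whose forcing terms are $O(t^2)$; because $|D^2 V| \le M_2$, Gronwall yields $|A(t, y, \eta)|, |B(t, y, \eta)| \le C t^2$ with $C$ independent of $(y, \eta)$, giving $J_y = I + t^2 a$ and $J_\eta = t(I + t^2 b)$ with $a, b$ uniformly bounded.

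To promote $a, b$ to $\mcal{B}(\mf{R}^d_y \times \mf{R}^d_\eta)$, I would induct on $|\alpha| + |\beta|$, applying $\partial_y^\alpha \partial_\eta^\beta$ to the integral equations for $A$ and $B$. By Fa\`{a} di Bruno, each mixed partial of $(D^2 V)(x(s))$ expands into a finite sum of terms $(D^k V)(x(s)) \cdot P(\partial^{\gamma_j} x)$ with $k \ge 2$ and $|\gamma_j| < |\alpha| + |\beta|$. Hypothesis (V2) bounds each $D^k V$ by $M_k$, and the inductive hypothesis controls the lower-order derivatives of $x$, so the forcing in the equation for $\partial_y^\alpha \partial_\eta^\beta A$ is bounded uniformly in $(y, \eta)$, and Gronwall propagates this to the unknown; the argument for $B$ is identical. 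The main technical subtlety, and the point I expect to be the chief obstacle, is exactly this uniformity: the trajectory $x(t, y, \eta)$ itself is of course unbounded in $(y, \eta)$, but this is harmless because the chain rule only ever feeds us $D^k V$ with $k \ge 2$, and these are globally bounded by (V2).

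For the second claim, fix $y$ and consider the map $F_t(\eta) := x(t, y, \eta)$. The first part writes $F_t(\eta) = y + t\eta + t^2 \gamma(t, y, \eta)$ with $\gamma$ smooth and $\partial_\eta \gamma$ uniformly bounded in $(y, \eta)$. Solving $F_t(\eta) = x$ is then equivalent to the fixed-point problem
\[
\eta = \tfr{x - y}{t} - t \gamma(t, y, \eta),
\]
whose right-hand side is a contraction on $\mf{R}^d$ as soon as $|t| \cdot \|\partial_\eta \gamma\|_{L^\infty} < 1$. Choosing $\delta_0$ below this threshold produces the unique $\eta$, and hence the unique trajectory joining $y$ to $x$ in time $t$, completing the proposal.
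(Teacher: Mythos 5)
Your proposal is correct in substance, and it is genuinely more self-contained than what the paper does: the paper offers no proof at all, citing Fujiwara's Proposition 1.7 for the derivative estimates and remarking only that the second claim follows from the first via the Hadamard global inverse function theorem (the map $(y,\eta) \mapsto (x,y)$ is a local diffeomorphism with uniformly nondegenerate Jacobian for small $t \ne 0$, and properness upgrades this to a global one). Your route replaces that soft argument with a Banach fixed-point solution of $\eta = \tfrac{x-y}{t} - t\gamma(t,y,\eta)$, which buys a completely elementary, quantitative proof of both existence and uniqueness of the connecting trajectory with an explicit smallness threshold $\delta_0$ determined by $\|\partial_\eta\gamma\|_{L^\infty}$; the Volterra--Gronwall treatment of the variational equations, with the Fa\`a di Bruno induction exploiting that only $D^kV$, $k \ge 2$, ever appears, is exactly the right mechanism for membership of $a,b$ in $\mcal{B}(\mf{R}^d_y \times \mf{R}^d_\eta)$ uniformly for bounded $t$, and your observation that the unboundedness of the trajectory itself is harmless is the correct key point.

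One small inaccuracy to repair: for $B = J_\eta - tI$ the forcing term $-\int_0^t (t-s)\,D^2V(x(s))\,s\,ds$ is $O(|t|^3)$, not $O(t^2)$, and you need the bound $|B| \lesssim |t|^3$ (not $|t|^2$) in order to write $J_\eta = t(I + t^2 b)$ with $b$ bounded; the same Gronwall argument delivers this once the power of $t$ is tracked, and in the induction for higher derivatives of $B$ the differentiated forcing terms are again $O(|t|^3)$ because the inductive bounds $|\partial^\gamma B| \lesssim |s|^3$ feed back into the Volterra kernel. Likewise, in the final step note that $\gamma$ itself is not bounded in $(y,\eta)$ (it involves $\nabla V$ along the trajectory), but, as you use, only $\partial_\eta\gamma = t\,b$ enters the contraction constant, so the threshold for $\delta_0$ is uniform in $x$ and $y$.
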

\begin{rmk}
To get the second statement from the first, one invokes the Hadamard
global inverse function theorem to see that $(y, \eta) \mapsto (x, y)$
is a diffeomorphism for $0 \ne t$ sufficiently small.
\end{rmk}

Consequently, when $0 < |t| \le \delta_0$ we can define the \emph{action}
\begin{equation}
\label{eqn:action_def}
S(t, x, y) = \int_0^t \fr{1}{2}|\xi(\tau)|^2 - V(x(\tau)) \, d\tau,
\end{equation}
where $(x(\tau), \xi(\tau))$ is the unique trajectory with $x(0) = y$
and $x(t) = x$.

\begin{thm}[Unitary propagator {\cite{fujiwara_fundamental_solution,fujiwara_path_integrals}}]
\label{thm:fujiwara_propagator}
Let $V$ be subquadratic as in the previous proposition. Then there
exists $\delta_0 > 0$ such that:
\begin{itemize}
\item The action $S(t, x, y)$ is well-defined by~\eqref{eqn:action_def} for all
  $0 < |t| < \delta_0$ and
  satisfies
\begin{equation}
\nonumber
S(t, x, y) = \fr{1}{2t}|x-y|^2 + t\omega(t, x, y),
\end{equation}
where the term $\omega(t, \cdot, \cdot)$ belongs to $\mcal{B}_2$ uniformly for $|t| \le
\delta_0$. That is, there exist constants $C_k$ such that 
\begin{equation}
\nonumber
|\nabla^k_{x,y} \omega(t, x, y)| \le C_k (1 + |x| + |y|)^{ \max(2-k, 0)}
\end{equation}
for all $k$.

\item For all $0 < |t| < \delta_0$ and all $f \in C^\infty_c(\mf{R}^d)$ we
have
\[
e^{-itH} f(x) = \tfr{1}{(2\pi i t)^{d/2}}\int_{\mf{R}^d} e^{iS(t, x, y)}
a(t, x, y) f(y) \, dy,
\]
where
\[
\| \nabla^k_{x, y} [ a(t, \cdot, \cdot) - 1] \|_{L^\infty(\mf{R}^d_x \times
  \mf{R}^d_y)} = O_k(t^2) \quad \text{for all} \quad k \ge 0.
\]

\end{itemize}

\end{thm}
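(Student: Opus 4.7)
The plan is to carry out the standard WKB construction on a small time interval and then identify the resulting oscillatory integral operator with $e^{-itH}$.

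For the first bullet, I would start with the Hamilton system \eqref{eqn:hamilton_flow} and use Proposition~\ref{prop:classical_trajectories} to invert the map $(y,\eta)\mapsto(y,x(t,y,\eta))$ on the range $0 < |t| \le \delta_0$, obtaining a smooth $\eta = \eta(t,x,y)$. Integrating $\dot{\xi} = -\nabla V(x)$ twice yields
\[
x(t,y,\eta) = y + t\eta - \int_0^t (t-\tau)\nabla V(x(\tau))\,d\tau,
\]
so that $\eta = (x-y)/t + t\,g(t,x,y)$ with $g$ in $\mcal{B}$ uniformly in $t$, since derivatives of $\nabla V$ of order $\ge 1$ are bounded. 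Substituting into the action \eqref{eqn:action_def} and using $\xi(\tau) = \eta - \int_0^\tau \nabla V(x(\sigma))\,d\sigma$, the kinetic term $\tfrac{t}{2}|\eta|^2$ produces $|x-y|^2/(2t)$ exactly, while the cross term and the Lagrangian contribution $-\int_0^t V(x(\tau))\,d\tau$ combine into a remainder of the form $t\,\omega(t,x,y)$. Since $V$ itself satisfies $|V(x)|\lesssim 1 + |x|^2$, $|\nabla V(x)|\lesssim 1+|x|$, and $|D^k V|\lesssim 1$ for $k\ge 2$, tracking the $x, y$ derivatives through the inversion $\eta(t,x,y)$ (each additional $y$- or $x$-derivative either produces a factor that is bounded by the hypothesis or lands on the flow map, where by the proposition deviations from the free flow carry an extra $t^2$) gives the estimate $|D^k_{x,y}\omega| \lesssim (1+|x|+|y|)^{\max(2-k,0)}$ uniformly for $|t|\le\delta_0$.

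For the second bullet, I seek $a(t,x,y)$ so that
\[
U(t)f(x) := \tfr{1}{(2\pi i t)^{d/2}} \int_{\mf{R}^d} e^{iS(t,x,y)} a(t,x,y) f(y) \, dy
\]
satisfies $i\partial_t U f = H U f$ with $U(0) = I$. Since $S$ satisfies the Hamilton--Jacobi equation $\partial_t S + \tfrac{1}{2}|\nabla_x S|^2 + V = 0$ by construction, a direct computation shows that the oscillatory phase cancels out and $a$ must obey the transport equation
\[
\partial_t a + \nabla_x S \cdot \nabla_x a + \tfr{1}{2}(\Delta_x S)\,a = \tfr{i}{2}\Delta_x a,
\]
with $a(0,x,y) = 1$. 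I would first solve the homogeneous transport equation (dropping $\tfrac{i}{2}\Delta_x a$) along the Hamiltonian flow; its solution is essentially $|\det(\partial x/\partial\eta)/t|^{-1/2}$, which is $1 + O(t^2)$ in every $C^k$ norm by Proposition~\ref{prop:classical_trajectories}. The right-hand side is then removed by a Picard iteration $a_{j+1} = a_j + \text{(Duhamel correction driven by }\tfr{i}{2}\Delta_x a_j)$; each correction picks up an extra factor of $t^2$ because integration along the flow on $[0,t]$ gains $t$ and the curvature terms in $\Delta_x S$ gain another $t$. The iteration converges uniformly in every $\mcal{B}_k$ norm on $0 < |t| < \delta_0$, giving $\| D^k_{x,y}[a(t,\cdot,\cdot) - 1]\|_\infty = O_k(t^2)$.

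Finally, I would identify $U(t)$ with $e^{-itH}$ by checking that $U(t)f \to f$ in $L^2$ as $t \to 0$ via stationary phase on the Gaussian kernel $e^{i|x-y|^2/(2t)}/(2\pi i t)^{d/2}$, and that $U(t)f$ solves the Schr\"odinger equation exactly by construction; uniqueness in $L^2$ then forces $U(t) = e^{-itH}$ on $C^\infty_c$. The main obstacle is controlling the growth of derivatives of $S$ and $a$ through the nonlinear classical flow while keeping the estimates uniform in $t$; the saving grace is the $t^2$-smallness built into Proposition~\ref{prop:classical_trajectories}, which lets each iteration contract and allows the Picard scheme to close. Since this is precisely the content of Fujiwara's papers~\cite{fujiwara_fundamental_solution, fujiwara_path_integrals}, we sketch only the shape of the argument and defer the detailed estimates to those references.
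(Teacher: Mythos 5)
The paper does not prove this theorem at all: it is imported verbatim as a black-box result of Fujiwara, with the citation standing in for the proof. Your decision to defer the detailed estimates to \cite{fujiwara_fundamental_solution, fujiwara_path_integrals} is therefore consistent with how the paper itself treats the statement, and your derivation of the phase expansion $S = |x-y|^2/2t + t\omega$ from the integrated Hamilton system and Proposition~\ref{prop:classical_trajectories} is essentially the computation the paper later redoes (in sharper form) in Lemma~\ref{lma:refined_action}.

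That said, the construction you sketch for the second bullet is not Fujiwara's argument, and as written it has a real gap if one tries to take it at face value rather than as a pointer to the literature. Fujiwara builds the propagator by time-slicing: he composes many short-time approximations of the form $(2\pi i t/N)^{-d/2}\int e^{iS}(\cdot)\,dy$ and proves convergence of the iterated oscillatory integrals via stationary-phase estimates with uniform control of all derivatives of the resulting amplitudes (this is the content of Theorem~\ref{thm:fujiwara_bddness}-type bounds and their refinements). Your route is instead a WKB parametrix plus Picard iteration on the transport equation. Two points there do not close as stated. First, with the prefactor $(2\pi i t)^{-d/2}$ pulled out, $\Delta_x S = d/t + O(t)$, so the transport equation for $a$ must be the one in which the singular $d/(2t)$ term is cancelled against $\partial_t$ of the prefactor; your bookkeeping ``$\Delta_x S$ gains another $t$'' only applies after this normalization. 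Second, and more seriously, each Duhamel correction is driven by $\tfrac{i}{2}\Delta_x a_j$, i.e.\ it consumes two spatial derivatives of the previous iterate, so the scheme loses derivatives at every step and does not contract ``uniformly in every $\mathcal{B}_k$ norm'' by the naive counting; controlling this loss (or avoiding it altogether, as Fujiwara does with the time-slicing method) is precisely the hard analytic content of the cited papers. Since you ultimately invoke those papers for the estimates, the proposal is acceptable as a justification of the statement, but the Picard step should not be presented as if it were a self-contained proof.
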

\let\theenumi\oldtheenumi

The above integral representation immediately yields a dispersive estimate:
\begin{cor}[Dispersive estimate]
\label{cor:dispersive_estimate}
For $|t| \le \delta_0$, we have
\[
\| e^{-itH} f\|_{\infty} \lesssim |t|^{-\fr{d}{2}} \| f\|_1.
\]
\end{cor}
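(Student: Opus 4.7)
The bound is essentially a direct pointwise reading of Fujiwara's integral representation. The plan is as follows: for $0 < |t| \le \delta_0$ and $f \in C^\infty_c(\mf{R}^d)$, I would begin from the formula supplied by Theorem~\ref{thm:fujiwara_propagator},
\[
e^{-itH} f(x) = \tfr{1}{(2\pi i t)^{d/2}} \int_{\mf{R}^d} e^{iS(t, x, y)} a(t, x, y) f(y)\, dy,
\]
and take absolute values under the integral sign. Since $S(t,x,y)$ is real-valued, the oscillatory factor contributes $|e^{iS(t,x,y)}| = 1$, and the entire decay comes from the prefactor $(2\pi |t|)^{-d/2}$.

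The only other ingredient is uniform boundedness of the amplitude. The theorem states that $\|D^k_{x,y}[a(t,\cdot,\cdot)-1]\|_{L^\infty} = O_k(t^2)$; taking $k = 0$ and using $|t| \le \delta_0$, I obtain a constant $C$ depending only on the subquadratic constants of $V$ and on $\delta_0$ such that $|a(t,x,y)| \le 1 + C\delta_0^2$ uniformly in $(x,y)$. Combining this with the previous display and pulling the constants out of the integral gives
\[
|e^{-itH} f(x)| \le \tfr{1 + C\delta_0^2}{(2\pi |t|)^{d/2}} \int_{\mf{R}^d} |f(y)|\, dy \lesssim |t|^{-d/2} \|f\|_{L^1}
\]
for every $x \in \mf{R}^d$. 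Taking the supremum over $x$ proves the bound for Schwartz $f$, and a standard density argument extends it to all $f \in L^1$.

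There is no real obstacle here: the dispersive estimate is precisely the qualitative content that Fujiwara's representation was designed to encode, namely that for small $|t|$ the propagator $e^{-itH}$ behaves just like the free propagator $e^{it\Delta/2}$ at the level of pointwise kernel bounds. The only point worth recording is that the constant in the estimate depends only on $\delta_0$ and the subquadratic bounds $M_k$ (for $k \ge 2$) on $V$, so the estimate will behave well when it is later applied in compactness arguments where $V$ is held fixed.
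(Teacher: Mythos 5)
Your proof is correct and coincides with the paper's argument: the paper simply observes that Fujiwara's integral representation ``immediately yields'' the estimate, which is exactly your pointwise reading of the kernel, using $|e^{iS}|=1$ and the uniform bound $\|a(t,\cdot,\cdot)-1\|_{L^\infty}=O(t^2)$ to control the amplitude. Nothing further is needed.
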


A pair of exponents is $(q, r)$ \emph{admissible} if
$2 \le q, r \le \infty$ and $\tfr{2}{q} + \tfr{d}{r} =
\tfr{d}{2}$. For an interval $I$, define the Strichartz spaces
\begin{align*}
  S(I) = L^\infty_t L^2_x  \cap L^2_t L^{\fr{2d}{d-2}}_x (I \times
  \mf{R}^d), \quad N(I) = L^1_t L^2_x + L^2_t L^{\fr{2d}{d+2}}_x (I
  \times \mf{R}^d).
\end{align*}
By interpolation, the $S$ norm controls $\|u\|_{L^q_t L^r_x}$ for all
admissible pairs $(q, r)$, while the $N$ norm is controlled by the
dual $(q', r')$ of any admissible exponents.

\begin{lma}[Strichartz {\cite{keel-tao}}]
\label{lma:strichartz}
Let $I$ be a compact time interval containing $t_0$, and let $u: I
\times \mf{R}^d \to \mf{C}$ be a solution to the inhomogeneous
Schr\"{o}dinger equation 
\[
(i\partial_t - H) u = F.
\]
Then there is a constant $C$, depending only on the length of the
interval $I$, such that 
\[
\| u\|_{S(I)} \le C( \| u_0\|_{L^2} + \| F\|_{N(I)} ).
\]
\end{lma}
\begin{proof}
  This follows from the abstract Keel-Tao theorem \cite{keel-tao} as a
  consequence of the dispersive estimate of the previous corollary,
  and the unitarity of $e^{-itH}$ on $L^2(\mf{R}^d)$.
\end{proof}

As $V$ is nonnegative, we have access to the following spectral
multipler theorem of Hebisch~\cite{hebisch}:
\begin{thm}
\label{thm:hebisch}
If $F: (0, \infty) \to \mf{C}$ is a bounded function which obeys the
derivative estimates
\[
|\partial^k F(\lambda) | \lesssim_k |\lambda|^{-k} \quad \text{for all}
\quad 0 \le k \le \tfr{d}{2} + 1,
\]
then the operator $F(H)$, defined initially on $L^2$ by the Borel
functional calculus, is bounded from $L^p$ to $L^p$ for all $1 < p <
\infty$. 
\end{thm}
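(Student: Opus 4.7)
The plan is to deduce Theorem~\ref{thm:hebisch} from an abstract spectral multiplier framework (such as that of Duong-Ouhabaz-Sikora, which generalizes Hebisch's original argument). The crucial analytic input is a Gaussian upper bound on the heat kernel of $H$; the remaining steps are then essentially structural.

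First I would establish the Gaussian heat kernel estimate
\[
0 \le e^{-tH}(x,y) \le (2\pi t)^{-d/2} e^{-|x-y|^2/(2t)}, \quad t > 0.
\]
Since $V \ge 0$, this follows from the Feynman-Kac formula, which expresses $e^{-tH}(x,y)$ as an expectation of $e^{-\int_0^t V(B_s)\,ds}$ against the Brownian bridge measure and thereby pointwise dominates it by the free heat kernel. This is the only place the nonnegativity hypothesis enters in an essential way.

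Next I would invoke finite propagation speed for the wave equation associated to $H$: because $H$ is a second-order elliptic operator with principal symbol $\tfrac{1}{2}|\xi|^2$, the kernel of $\cos(t\sqrt{H})$ is supported in $\{|x-y| \le t\}$. Combined with the Gaussian heat kernel bound, a standard argument (see Sikora, or Duong-Ouhabaz-Sikora) then yields the off-diagonal estimate
\[
|\phi(\sqrt{H})(x, y)| \lesssim_M N^d (1 + N|x-y|)^{-M}
\]
for any Schwartz function $\phi$ with Fourier support in $[-N, N]$ and any $M \ge 0$.

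Finally, given a multiplier $F$ satisfying the stated H\"{o}rmander-type derivative bounds up to order $\lfloor d/2 \rfloor + 1$, one decomposes $F(\sqrt{H^{1/2}})$ -- or equivalently $F(H)$ after reparametrizing -- dyadically in frequency via a Littlewood-Paley partition of unity. Each dyadic piece is of the form above after rescaling, and the resulting kernel estimates fit the hypotheses of a Calder\'{o}n-Zygmund theorem on the space of homogeneous type $(\mathbb{R}^d, dx, |\cdot|)$. Summing over dyadic scales using the derivative bounds on $F$ produces weak $(1,1)$ and hence $L^p$ boundedness for $1 < p \le 2$, with the range $2 \le p < \infty$ following by duality. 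The delicate point in this last step is matching the regularity exponent $d/2+1$ to the Calder\'{o}n-Zygmund input, and that is precisely what Hebisch's original paper carries out; since the heat kernel bound above is the only operator-specific ingredient needed, the conclusion follows.
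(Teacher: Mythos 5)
The paper offers no proof of this statement at all---it is quoted verbatim from Hebisch \cite{hebisch}---and your sketch reproduces exactly the standard argument underlying that citation: Feynman--Kac (or Trotter) domination of $e^{-tH}(x,y)$ by the free Gaussian kernel using only $V \ge 0$, finite propagation speed for $\cos(t\sqrt{H})$, and a dyadic Calder\'{o}n--Zygmund argument as in Hebisch and Duong--Ouhabaz--Sikora. Your outline is correct and is essentially the same approach as the cited source, up to harmless constants (e.g.\ the propagation speed is $1/\sqrt{2}$ with the $\tfrac{1}{2}\Delta$ normalization).
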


The following norm equivalence was first proven for the quadratic
potential by
Killip-Visan-Zhang~\cite[Lemma~2.7]{kvz_quadratic_potentials}. Using
the coercivity hypothesis \ref{eqn:potential_hyp_1}, we adapt their
result to the potentials considered here.

\begin{prop}[Equivalence of norms]
\label{prop:equivalence_of_norms}
For any $1 < p < \infty$ and $s \in [0, 1]$, we have 
\[
\| H^{s} f\|_p \sim_{p, s} \| (-\Delta)^{s} f \|_p + \|
V^s f\|_p
\]
for all Schwartz functions $f$. 
\end{prop}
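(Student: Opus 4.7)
The plan is to establish the endpoint cases $s = 0$ and $s = 1$, then interpolate between them.

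At $s = 0$ both sides equal $\|f\|_p$. At $s = 1$, the upper bound $\|Hf\|_p \le \tfr{1}{2}\|\Delta f\|_p + \|Vf\|_p$ follows from the triangle inequality. For the lower bound, since $-\Delta f = 2(Hf - Vf)$, it suffices to prove $\|Vf\|_p \lesssim \|Hf\|_p$, equivalently that $V H^{-1}$ is bounded on $L^p$; the resolvent is well defined because hypothesis \ref{enum:potential_hyp_2} places the spectrum of $H$ above a positive constant. Using $V = H + \tfr{1}{2}\Delta$, this reduces to the $L^p$-boundedness of $\Delta H^{-1}$.

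To bound $\Delta H^{-1}$ on $L^p$, I would represent $H^{-1} = \int_0^\infty e^{-tH}\,dt$. For small $t$, Wick-rotating Fujiwara's oscillatory-integral representation (Theorem~\ref{thm:fujiwara_propagator}) yields a heat kernel that is quantitatively close to the free Gaussian, and the singularity of $\|\Delta e^{-tH}\|_{L^p \to L^p}$ as $t \to 0^+$ is tamed by the identity $\Delta e^{-tH} = -2\partial_t e^{-tH} - 2Ve^{-tH}$ and integration by parts in $t$. For large $t$, the spectral gap combined with Hebisch's multiplier theorem (Theorem~\ref{thm:hebisch}) supplies exponential decay. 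At $p = 2$ a much more direct argument is available: integrating by parts in $\|Hf\|_2^2$ produces an identity of the form $\|Hf\|_2^2 = \tfr{1}{4}\|\Delta f\|_2^2 + \int V|\nabla f|^2\,dx - \tfr{1}{2}\int |f|^2 \Delta V\,dx + \|Vf\|_2^2$, and since $\Delta V$ is bounded by \ref{enum:potential_hyp_1}, one obtains $\|Vf\|_2 \lesssim \|Hf\|_2$ after absorbing the $\|f\|_2^2$ term using the spectral gap.

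For intermediate $s \in (0, 1)$ I would invoke Stein's complex interpolation, applied for instance to the analytic family $T_z = H^z\bigl[(-\Delta)^z + V^z\bigr]^{-1}$ on the strip $0 \le \mathrm{Re}\,z \le 1$. The required vertical-line estimates amount to $L^p$-boundedness of the imaginary powers $H^{i\tau}$, $(-\Delta)^{i\tau}$, $V^{i\tau}$ with operator norms growing at most polynomially in $|\tau|$: the first follows from Theorem~\ref{thm:hebisch}, the second is classical Mihlin--H\"ormander, and the third is trivial since $V^{i\tau}$ is a unitary multiplication operator.

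The main obstacle will be the lower bound at $s = 1$ for $p \ne 2$. In the quadratic case $V(x) = \tfr{1}{2}|x|^2$ treated in~\cite{kvz_quadratic_potentials}, Mehler's explicit formula short-circuits many calculations; here one must extract the analogous quantitative information from Fujiwara's less explicit representation together with the Asada--Fujiwara oscillatory-integral estimates and the exponential heat-kernel confinement guaranteed by \ref{enum:potential_hyp_2}.
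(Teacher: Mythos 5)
There is a genuine gap, and it sits exactly where you flag it: the lower bound $\|V^s H^{-s}f\|_p \lesssim \|f\|_p$ (already at $s=1$, $p\neq 2$) is the heart of the proposition, and your proposal does not actually prove it. The route you sketch --- Wick-rotating the representation of Theorem~\ref{thm:fujiwara_propagator} to get a heat kernel ``close to the free Gaussian'' and then integrating $\Delta e^{-tH}$ or $Ve^{-tH}$ in $t$ --- does not work as stated. Fujiwara's construction is a real-time oscillatory-integral representation valid for $0<|t|<\delta_0$; no analytic continuation to imaginary time is provided, and none is used in this paper. Moreover, even granting a Gaussian-type bound on $e^{-tH}(x,y)$, ``close to the free Gaussian'' is not enough: without a confinement factor of the form $e^{-ctV(x)}$ one only gets $\|Ve^{-tH}\|_{L^p\to L^p}\lesssim t^{-1}$, which is not integrable near $t=0$, so the subordination/Duhamel-in-$t$ argument is borderline and needs precisely the quantitative heat-kernel information you have not established. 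The paper's proof bypasses all of this with a positivity argument: since $V\ge \delta|x|^2$ by \ref{enum:potential_hyp_2}, the parabolic maximum principle gives the pointwise kernel domination $0\le e^{-tH}(x,y)\le e^{-tH_\delta}(x,y)$ with $H_\delta=-\tfrac12\Delta+\delta|x|^2$; combined with $H^{-s}(x,y)=\tfrac{1}{\Gamma(s)}\int_0^\infty e^{-tH}(x,y)\,t^{s-1}dt$ this yields $0\le H^{-s}(x,y)\le H_\delta^{-s}(x,y)$, and then $\|V^sH^{-s}f\|_p\le \|V^sH_\delta^{-s}|f|\|_p\lesssim \|f\|_p$ by the Mehler-based result of \cite{kvz_quadratic_potentials} for the exact quadratic potential. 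This comparison-with-$H_\delta$ idea is the key step you are missing, and it gives the estimate for \emph{all} $s\in[0,1]$ at once, not just $s=1$.

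Two further points. First, your interpolation family $T_z=H^z[(-\Delta)^z+V^z]^{-1}$ is defective: on the boundary line $\mathrm{Re}\,z=0$ the operator $(-\Delta)^{i\tau}+V^{i\tau}$ is a sum of two unitaries and need not be (boundedly) invertible, and analyticity of $z\mapsto T_z$ is unclear; besides, even if it worked it would give only the direction $\|H^sf\|_p\lesssim\|(-\Delta)^sf\|_p+\|V^sf\|_p$. The paper instead runs Stein interpolation on the clean family $F(z)=\langle(-\Delta)^zH^{-z}f,g\rangle$, whose boundary bounds need only the $s=1$ case $\|\Delta H^{-1}\|_{p\to p}\lesssim 1$ (which follows for free from $-\Delta=2(H-V)$ once $VH^{-1}$ is handled) together with bounded imaginary powers of $-\Delta$ (Mihlin) and of $H$ (Theorem~\ref{thm:hebisch}); the reverse inequality is then obtained not by a second interpolation but by duality plus the algebraic splitting $H^sf=\tfrac12H^{s-1}(-\Delta)^{1-s}(-\Delta)^sf+H^{s-1}V^{1-s}V^sf$. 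Second, your $p=2$, $s=1$ integration-by-parts identity is correct and fine, but it plays no role in closing the general case. What is salvageable from your outline is the overall architecture (endpoint plus complex interpolation with imaginary powers); what must be replaced is the treatment of $V^sH^{-s}$, for which the domination by the harmonic oscillator is the intended mechanism.
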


To prove this we shall need the following fact, which is classical
when $V$ is exactly quadratic; we give a proof for the sake of completeness.
\begin{lma}
\label{lma:smooth_vectors}
Let $H = -\tfr{1}{2}\Delta + V$ where $V \ge 0$ is smooth and satisfies the hypotheses
\ref{eqn:potential_hyp_0}, \ref{eqn:potential_hyp_1}. Then
the space of smooth vectors for $H$ is precisely Schwartz class:
\[
D(H^\infty) := \bigcap_{n \ge 0} D(H^n) = \mcal{S}(\mf{R}^d).
\]
\end{lma}

\begin{proof}[Proof of Proposition~\ref{prop:equivalence_of_norms}]
We show first that 
\begin{equation}
\label{eqn:norm_equivalence_eqn0}
\| (-\Delta)^s f \|_p + \| V^s f\|_p \lesssim_p \| H^s f\|_p \quad
\text{for all} \quad f \in \mcal{S}(\mf{R}^d).
\end{equation}
As $f = H^{-s} H^s f$ and $H^s f \in \mcal{S}(\mf{R}^d)$ by Lemma
\ref{lma:smooth_vectors}, it suffices to prove
\begin{equation}
\label{eqn:norm_equivalence_eqn1}
\| (-\Delta)^s H^{-s} f \|_p  + \| V^s H^{-s} f\|_p \lesssim_p \|f\|_p
\quad \text{for all} \quad f \in \mcal{S}(\mf{R}^d).
\end{equation}
By hypothesis, there is some $\delta > 0$ such that $V(x) \ge \delta
|x|^2$. Killip-Visan-Zhang
\cite{kvz_quadratic_potentials} proved that
\[
\| V^s H^{-s}_{\delta}f \|_p \lesssim_p \| f\|_p,
\] 
where $H_{\delta} = -\tfr{1}{2}\Delta + \delta |x|^2$. On the other
hand, the parabolic maximium principle implies
\[
0 \le e^{-tH} (x, y) \le e^{-tH_{\delta} } (x, y)
\]
Combining this
with the identity
\[
H^{-s}(x, y) = \tfr{1}{\Gamma(s)} \int_0^\infty e^{-tH} (x, y) t^{s-1} dt,
\]
we obtain the kernel inequality
\[
0 \le H^{-s}(x, y) \le H_{\delta}^{-s} (x, y)
\]
In particular, $V^s H^{-s}$ and $V^s H_\delta^{-s}$ have nonnegative
integral kernels. We may therefore bound
\[
\| V^s H^{-s} f\|_{p} \le \| V^s H^{-s} |f| \|_p \le \| V^s
H_{\delta}^{-s} |f| \|_p \lesssim_p \|f \|_p.
\]
This yields half of (\ref{eqn:norm_equivalence_eqn1}). Specializing to
the case $s = 1$ and writing $-\Delta = 2(H - V)$, we
obtain
\begin{equation}
\label{eqn:norm_equivalence_eqn2}
\| (-\Delta) H^{-1} f\|_p \lesssim_p \| f\|_p.
\end{equation}
The rest of the argument is imported directly
from~\cite{kvz_quadratic_potentials}, and is included to make the
discussion self-contained. To show that $\| (-\Delta)^s H^{-s} f\|_p
\lesssim_p \| f\|_p$ for all $s \in [0, 1]$, we use analytic
interpolation.  It suffices to verify that
\[
| \langle (-\Delta)^z H^{-z} f, g \rangle_{L^2} | \le C \|f\|_p \|g\|_{p'}
\]
for all Schwartz $f$ and $g$. By homogeneity, we may assume $\|f\|_p =
\|g\|_{p'} = 1$. Put
\[
F(z) = \langle (-\Delta)^z H^{-z} f, g \rangle_{L^2}.
\]
By the spectral theorm, $F(z)$ is bounded and continuous on the closed
strip $\{ 0 \le \opn{Re}(z) \le 1 \}$ and analytic on its interior. By
the special case (\ref{eqn:norm_equivalence_eqn2}) and 
Theorem \ref{thm:hebisch}, 
\[
\begin{split}
|F(it)| &\le \| (-\Delta)^{it} H^{-it} f\|_p \le C_0\\
|F(1+it)| &\le \| (-\Delta)^{it} (-\Delta) H^{-1} H^{-it} f\|_p \le C_1.
\end{split}
\]
Hadamard's three-lines lemma implies that $|F(z)| \le C$ on the whole strip.
Thus (\ref{eqn:norm_equivalence_eqn0}) and
(\ref{eqn:norm_equivalence_eqn1}) hold for all $p \in (1, \infty)$ and
$s \in [0, 1]$.

Dualizing those estimates yields
\[
\| H^{-s} (-\Delta)^s f\|_p + \| H^{-s} V^s f\|_p \lesssim_{p,s} \|f\|_p
\quad \text{for all} \quad p \in (1, \infty), \ s \in [0, 1]
\]
Writing $H^sf = H^{s-1} Hf = \tfr{1}{2} H^{s-1} (-\Delta)^{1-s}
(-\Delta)^s f + H^{s-1} V^{1-s} V^s f$, we have
\[
\|H^s f\|_p \lesssim_{p,s} \| (-\Delta)^s f\|_p + \| V^s f\|_p \quad
\text{for all} \quad p \in (1, \infty), \ s \in [0, 1].
\]
This completes the proof of the proposition modulo the lemma.
\end{proof}

\begin{proof}[Proof of Lemma~\ref{lma:smooth_vectors}]
  The inclusion $\mcal{S}(\mf{R}^d) \subset D(H^\infty)$ is clear. To prove
  the opposite inclusion, we show by an induction argument the
  equivalent assertion that
\begin{equation}
\label{eqn:smooth_vectors_equiv_inclusion}
D(H^\infty)
  \subset \bigcap_{k \ge 0} \{ u : x^{\alpha} \partial^\beta u \in L^2 \ \text{for all} \ |\alpha|
+ |\beta| \le k\}.
  \end{equation}
We have the following identities:
\begin{equation}
\label{eqn:smooth_vectors_commutators}
\begin{split}
H \partial_j u &= \partial_j H u - (\partial_j V) u\\
H mu &= m H u - \tfr{1}{2}(\Delta m) u - \nabla m \cdot \nabla u
\end{split}
\end{equation}
Define for each $n \ge 1$ the following statements:
\[
\begin{split}
P_1(n) &= ``m: D(H^{n-1}) \to D(H^{n-1}) \ \text{for all} \ m
\in \mcal{B}"\\
P_2(n) &= ``\partial_j: D(H^n) \to D(H^{n-1})"\\
P_3(n) &= ``\partial_j V : D(H^n) \to D(H^{n-1})".
\end{split}
\]
As $D(H) \subset D(H^{1/2}) = \{ u : \|\nabla u\|_{L^2} + \|
xu\|_{L^2} < \infty\}$, these hold for $n = 1$.

Assume that they hold for some $n$. For $u \in D(H^n)$ and $m \in
\mcal{B}$, use \eqref{eqn:smooth_vectors_commutators} and the
statements $P_1(n), \ P_2(n)$ to see that $H (mu) \in D(H^{n-1})$, so
$mu \in D(H^{n})$ and $P_1(n+1)$ holds since $m$ was chosen arbitrarily
in $\mcal{B}$. Similar reasoning shows that $P_2(n)$ and $P_3(n)$
imply $P_2(n+1)$, and that $P_1(n), \ P_2(n), \ P_3(n)$ yield
$P_3(n+1)$. Hence, by induction these statements hold for all $n\ge 1$.

Next, apply~\eqref{eqn:norm_equivalence_eqn1} in the special case
$s = 1, \ p = 2$ to see that 
\[
V : D(H) \to D(H^0) = L^2.
\]
Suppose $u \in D(H^n)$ and $n \ge 2$. We have
\[
H (Vu) = V Hu - \tfr{1}{2}(\Delta V) u - \nabla V \cdot \nabla u.
\]
By induction, $V Hu \in D(H^{n-2})$, while $P_1(n), \ P_2(n)$, and
$P_3(n-1)$ imply that the second and third terms also belong to
$D(H^{n-2})$. Thus $Vu \in D(H^{n-1})$

Summing up, we find that
\[
V : D(H^n) \to D(H^{n-1}) \ \text{for all} \ n \ge 1.
\]
These mapping properties, together with the coercivity hypothesis
\ref{eqn:potential_hyp_1}, immediately yield the
claim~\eqref{eqn:smooth_vectors_equiv_inclusion}.
\end{proof}

Thanks to this norm equivalence, $H^\gamma$ inherits many
properties of the fractional derivative $(-\Delta)^{\gamma}$, including Sobolev embedding:
\begin{lma}[{\cite[Lemma 2.8]{kvz_quadratic_potentials}}]
\label{lma:sobolev_embedding}
Suppose $\gamma \in [0, 1]$ and $1 < p < \fr{d}{2\gamma}$, and define $p^*$
by $\fr{1}{p^*} = \fr{1}{p} - \fr{2\gamma}{d}$. Then
\[
\| f\|_{L^{p^*}(\mf{R}^d)} \lesssim \| H^{\gamma} f\|_{L^p(\mf{R}^d)}.
\]
\end{lma}

Similarly, the fractional chain and product rules carry over to the 
current setting: 

\begin{cor}[{\cite[Proposition~2.10]{kvz_quadratic_potentials}}]
\label{cor:fractional_chain_rule}
Let $F(z) = |z|^{\fr{4}{d-2}} z$. For any $0 \le \gamma \le \fr{1}{2}$ and 
$1 < p < \infty$,
\[
\| H^{\gamma} F(u)\|_{L^p(\mf{R}^d)} \lesssim \| F'(u)\|_{L^{p_0}(\mf{R}^d)} \| 
H^{\gamma}f \|
_{L^{p_1}(\mf{R}^d)}
\]
for all $p_0, p_1 \in (1, \infty)$ with $p^{-1} = p_0^{-1} +
p_1^{-1}$. 
\end{cor}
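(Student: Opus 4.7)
The plan is to reduce the statement about $H^\gamma$ to corresponding statements about $(-\Delta)^\gamma$ and $V^\gamma$ via the norm equivalence of Proposition~\ref{prop:equivalence_of_norms}, and then handle each piece separately. Concretely, applying that proposition to both sides, it suffices to show
\[
\|(-\Delta)^\gamma F(u)\|_{L^p} + \|V^\gamma F(u)\|_{L^p} \lesssim \|F'(u)\|_{L^{p_0}} \bigl( \|(-\Delta)^\gamma u\|_{L^{p_1}} + \|V^\gamma u\|_{L^{p_1}} \bigr).
\]

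For the $(-\Delta)^\gamma$ piece, I would invoke the classical fractional chain rule for the Euclidean Laplacian (Christ--Weinstein), which gives
\[
\|(-\Delta)^\gamma F(u)\|_{L^p} \lesssim \|F'(u)\|_{L^{p_0}} \|(-\Delta)^\gamma u\|_{L^{p_1}}.
\]
This is where the restriction $\gamma \le 1/2$ enters: since $F(z) = |z|^{4/(d-2)} z$ is only Hölder continuously differentiable when $d$ is large (for $d = 6$ it is merely Lipschitz), the range of $\gamma$ for which the fractional chain rule applies without derivative loss is limited, and the bound $\gamma \le 1/2$ is a safe choice that works uniformly in $d \ge 3$.

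For the $V^\gamma$ piece, the key observation is the pointwise bound $|F(u)| \sim |F'(u)||u|$, which follows directly from $F(z) = |z|^{4/(d-2)} z$ and $F'(z) \sim |z|^{4/(d-2)}$. Hence $V^\gamma$ commutes trivially with $F(u)$ as a multiplication operator, and H\"{o}lder's inequality yields
\[
\|V^\gamma F(u)\|_{L^p} \lesssim \|F'(u) \cdot V^\gamma u\|_{L^p} \le \|F'(u)\|_{L^{p_0}} \|V^\gamma u\|_{L^{p_1}}.
\]
This is the easy half, and no smoothness of $F$ is required since $V$ is a multiplier.

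Combining the two estimates and re-applying Proposition~\ref{prop:equivalence_of_norms} to collapse $\|(-\Delta)^\gamma u\|_{L^{p_1}} + \|V^\gamma u\|_{L^{p_1}}$ back into $\|H^\gamma u\|_{L^{p_1}}$ gives the claim. The main (and really the only) substantive obstacle is citing the right version of the Euclidean fractional chain rule for the non-smooth nonlinearity $|u|^{4/(d-2)} u$; the constraint $\gamma \le 1/2$ is precisely what one needs so that this classical tool applies across all dimensions $d \ge 3$. Everything else is H\"{o}lder plus the norm equivalence already established.
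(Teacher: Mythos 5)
Your argument is correct and is essentially the proof the paper relies on (via the citation to Killip--Visan--Zhang): reduce through the norm equivalence of Proposition~\ref{prop:equivalence_of_norms}, apply the Euclidean fractional chain rule to the $(-\Delta)^{\gamma}$ piece, and use the pointwise bound $|F(u)| \sim |F'(u)|\,|u|$ together with H\"older for the $V^{\gamma}$ piece. One small correction: since $F$ is $C^1$, the Christ--Weinstein chain rule is available for all $\gamma \in (0,1)$, so the restriction $\gamma \le \tfrac{1}{2}$ is inherited from the cited statement (and is all that is needed later), not forced by the H\"older regularity of $F'$.
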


Using Proposition~\ref{prop:equivalence_of_norms} and the Christ-Weinstein
fractional product rule for $(-\Delta)^\gamma$
(e.g. \cite{taylor_tools}), we obtain
\begin{cor}
\label{cor:fractional_product_rule}
For $\gamma \in (0, 1], \  r, p_i, q_i \in (1, \infty)$ with $r^{-1} = p_i^{-1} +
q_{i}^{-1}, \ i = 1, 2$, we have 
\[
\| H^{\gamma} (fg)\|_r \lesssim \| H^{\gamma} f\|_{p_1} \|g\|_{q_1} +
\|f\|_{p_2} \|H^{\gamma} g\|_{q_2}.
\]
\end{cor}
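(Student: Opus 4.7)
The plan is to reduce the estimate to the Christ--Weinstein fractional Leibniz rule for $(-\Delta)^\gamma$ by means of the norm equivalence in Proposition~\ref{prop:equivalence_of_norms}. I would work initially with Schwartz $f, g$ and extend by density.

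First, one direction of the norm equivalence applied to the left-hand side gives
\[
\|H^\gamma(fg)\|_r \lesssim \|(-\Delta)^\gamma(fg)\|_r + \|V^\gamma(fg)\|_r,
\]
so it suffices to control each of these two pieces by the claimed right-hand side.

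For the Laplacian piece, the Christ--Weinstein product rule yields
\[
\|(-\Delta)^\gamma(fg)\|_r \lesssim \|(-\Delta)^\gamma f\|_{p_1}\|g\|_{q_1} + \|f\|_{p_2}\|(-\Delta)^\gamma g\|_{q_2},
\]
and the reverse direction of Proposition~\ref{prop:equivalence_of_norms} bounds each $\|(-\Delta)^\gamma f\|_{p_i} \lesssim \|H^\gamma f\|_{p_i}$ (and similarly for $g$). For the potential piece, since $V^\gamma$ is pointwise multiplication by $V(x)^\gamma$, we have the trivial identity $V^\gamma(fg) = \tfrac{1}{2}(V^\gamma f)\,g + \tfrac{1}{2} f\,(V^\gamma g)$; H\"older then gives
\[
\|V^\gamma(fg)\|_r \lesssim \|V^\gamma f\|_{p_1}\|g\|_{q_1} + \|f\|_{p_2}\|V^\gamma g\|_{q_2},
\]
and the norm equivalence once more bounds each $\|V^\gamma f\|_{p_i} \lesssim \|H^\gamma f\|_{p_i}$. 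Summing the two contributions yields the desired estimate.

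I do not anticipate any real obstacle here: the argument is essentially bookkeeping once the norm equivalence is in hand, with the multiplicative nature of $V^\gamma$ making the potential term immediate. The only things to verify are that the exponent constraints $r, p_i, q_i \in (1, \infty)$ keep us in the safe range for both the $L^p$-boundedness machinery behind Proposition~\ref{prop:equivalence_of_norms} and the Kato--Ponce inequality, and that $\gamma \in (0,1]$ is admissible for the Christ--Weinstein rule in the form stated.
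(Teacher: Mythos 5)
Your argument is correct and is essentially the paper's own proof: the paper deduces the corollary precisely by combining the norm equivalence of Proposition~\ref{prop:equivalence_of_norms} with the Christ--Weinstein fractional product rule for $(-\Delta)^\gamma$, and your treatment of the $V^\gamma$ term (pointwise multiplicativity plus H\"older) is the obvious way to fill in the detail the paper leaves implicit. The exponent ranges $\gamma \in (0,1]$ and $r, p_i, q_i \in (1,\infty)$ are indeed within the scope of both ingredients, so no further issues arise.
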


\subsection{FIO technology}
\label{subsection:fio}

We review some properties of Fourier integral operators tailored to the
Schr\"{o}dinger equation, which were developed 
by Fujiwara \cite{fujiwara_oscillatory_integrals} and Asada-Fujiwara
\cite{asada-fujiwara}. 

\begin{define}
\label{def:fio_tech_phase}
A \emph{phase function} is a smooth $\phi(x, y) \in
\mcal{B}_2(\mf{R}^d_x \times \mf{R}^d_y)$ which satisfies the
nondegeneracy condition
\begin{equation}
\label{eqn:fio_tech_nondegeneracy}
\inf_{x, y} | \det \nabla^2_{xy} \phi (x, y)| > 0.
\end{equation}

\end{define}
Given a phase $\phi(x, y)$ and an amplitude
$a(x, y) \in \mcal{B}( \mf{R}^d_x\times \mf{R}^d_y)$, define for each
$\lambda \ne 0$ the integral operator
\begin{equation}
\label{eqn:fio_tech_op_defn}
A(\lambda) f(x) = \left (\fr{\lambda}{2\pi i} \right )^{\fr{d}{2}} \int_{\mf{R}^d}
e^{i\lambda \phi(x, y)} a(x, y) f(y)dy.
\end{equation}
Note that in this notation, which we have carried over
from~\cite{asada-fujiwara}, $\lambda$ plays the role of frequency or
equivalently the inverse of the semiclassical parameter.
\begin{rmk}
Asada and Fujiwara studied more general
oscillatory integral operators of the form
\[
f \mapsto  \left(\fr{\lambda}{2\pi i}\right)^{\fr{m+n}{2}} \int_{\mf{R}^m}
\int_{\mf{R}^n} e^{i\phi(x, \theta, y)} a(x, \theta, y) f(y) \, dy d\theta
\]
where $a(x, \theta, y) \in \mcal{B}(\mf{R}^n_x \times
\mf{R}^m_{\theta} \times \mf{R}^n_y)$ and the phase $\phi$ satisfies
a nondegeneracy condition
\[
\left |\det \left(\begin{array}{cc} \nabla^2_{xy} \phi & \nabla^2_{x\theta}
      \phi\\ 
\nabla^2_{\theta y} \phi & \nabla^2_{\theta \theta} \phi \end{array}\right)
\right | \ge \delta 
\]
\end{rmk}

\begin{thm}[Fujiwara \cite{fujiwara_oscillatory_integrals}]
\label{thm:fujiwara_bddness}
$\| A(\lambda)\|_{L^2 \to L^2} \le C \| a\|_{C^{2d+1}}$
\end{thm}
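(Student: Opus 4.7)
My approach is the $TT^{*}$/$T^{*}T$ method combined with the Cotlar--Stein almost-orthogonality lemma, using the nondegeneracy condition~\eqref{eqn:fio_tech_nondegeneracy} to integrate by parts. First, I would introduce smooth partitions of unity $1 = \sum_{j \in \mathbf{Z}^d} \eta_j(x) = \sum_{k \in \mathbf{Z}^d} \chi_k(y)$ with $\eta_j, \chi_k$ translates of fixed bump functions supported in unit balls around lattice points, and write $A(\lambda) = \sum_{j,k} A_{jk}(\lambda)$ where $A_{jk}$ has amplitude $\eta_j(x)\chi_k(y) a(x,y)$, now compactly supported in a unit box so that all subsequent integrals are absolutely convergent.

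Next, I would examine $A_{jk}A_{j'k'}^{*}$, whose integral kernel
\[
K_{jk,j'k'}(x,x') = \left(\tfrac{\lambda}{2\pi}\right)^{d} \eta_j(x)\eta_{j'}(x') \int e^{i\lambda[\phi(x,y) - \phi(x',y)]} a(x,y)\overline{a(x',y)} \chi_k(y)\chi_{k'}(y)\,dy
\]
vanishes unless $|k - k'| \lesssim 1$. The nondegeneracy hypothesis and the fundamental theorem of calculus give $\nabla_y[\phi(x,y) - \phi(x',y)] = M(x,x',y)(x-x')$ with $M$ uniformly invertible and in $\mathcal{B}$. The first-order operator $L = \frac{M(x-x')}{i\lambda |M(x-x')|^{2}} \cdot \nabla_y$ reproduces $e^{i\lambda[\phi(x,y)-\phi(x',y)]}$, and $N$-fold integration by parts (boundary terms vanishing by the compact support of $\chi_k\chi_{k'}$) produces the pointwise bound
\[
|K_{jk,j'k'}(x,x')| \lesssim \lambda^{d-N}(1+|j-j'|)^{-N}\|a\|_{C^{N}}^{2}\, \eta_j(x)\eta_{j'}(x')\, \mathbf{1}_{\{|k-k'|\lesssim 1\}}.
\]
Schur's test then yields $\|A_{jk}A_{j'k'}^{*}\|_{L^{2}\to L^{2}} \lesssim (1+|j-j'|)^{-N}\mathbf{1}_{\{|k-k'|\lesssim 1\}}\|a\|_{C^{N}}^{2}$. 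A symmetric argument, using instead $\nabla_x[\phi(x,y')-\phi(x,y)]= \widetilde{M}(x,y,y')(y'-y)$ and integrating by parts in $x$ (now legal because the $\eta_j\eta_{j'}$ cut-off is compactly supported), gives $\|A_{jk}^{*}A_{j'k'}\|_{L^{2}\to L^{2}} \lesssim (1+|k-k'|)^{-N} \mathbf{1}_{\{|j-j'|\lesssim 1\}}\|a\|_{C^{N}}^{2}$.

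Finally, I would invoke Cotlar--Stein. Both sums
\[
\sup_{j,k}\sum_{j',k'}\|A_{jk}A_{j'k'}^{*}\|^{1/2} \quad\text{and}\quad \sup_{j,k}\sum_{j',k'}\|A_{jk}^{*}A_{j'k'}\|^{1/2}
\]
collapse to a constant multiple of $\|a\|_{C^{N}}\sum_{j' \in \mathbf{Z}^{d}}(1+|j-j'|)^{-N/2}$, which converges precisely when $N/2 > d$, i.e.\ $N \ge 2d+1$. Taking $N = 2d+1$ gives the stated control. The main obstacle is the careful bookkeeping in the integration by parts: one must verify that $L^{*}$ applied to the amplitude produces at most $\lambda^{-1}|x-x'|^{-1}$ per application (using that derivatives of $M^{-1}$ are bounded in $y$ on the unit-cube support), and that the Cotlar--Stein square root exactly halves the decay rate so that the threshold $N > d$ for summability on $\mathbf{Z}^d$ becomes $N > 2d$ on $a$. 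For $|\lambda| \lesssim 1$ the prefactor $(\lambda/2\pi i)^{d/2}$ is already bounded and a direct Schur estimate on the full operator suffices.
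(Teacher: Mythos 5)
Your overall architecture is the right one: the paper itself does not prove this theorem (it is quoted from Fujiwara), and the proofs in that literature are indeed almost-orthogonality arguments of the Cotlar--Stein type after a unit-scale decomposition, so your plan is sound in outline. However, there is a genuine gap at the crucial step. You assert that the averaged matrix $M(x,x',y)=\int_0^1 D^2_{xy}\phi\bigl(x'+s(x-x'),y\bigr)\,ds$ is ``uniformly invertible'' as a consequence of the nondegeneracy condition \eqref{eqn:fio_tech_nondegeneracy}. That deduction is invalid in dimension $d\ge 2$: a family of matrices can satisfy $|\det|\ge\delta$ pointwise while its average is singular (rotations through angles $0$ and $\pi$ average to zero), and after your decomposition the estimate for $A_{jk}A_{j'k'}^*$ is needed precisely in the regime $|x-x'|\sim|j-j'|$ large, where no continuity or perturbation argument saves the claim. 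What the integration by parts actually requires is the global lower bound $|\nabla_y\phi(x,y)-\nabla_y\phi(x',y)|\ge c\,|x-x'|$, and the correct route to it is the same device the paper uses to define the canonical transformation: since $\phi\in\mathcal{B}_2$ and $\inf|\det D^2_{xy}\phi|>0$, the inverse matrix $(D^2_{xy}\phi)^{-1}$ is uniformly bounded, so by Hadamard's global inverse function theorem the map $x\mapsto\nabla_y\phi(x,y)$ is a diffeomorphism of $\mathbb{R}^d$ with uniformly Lipschitz inverse, which is exactly the displayed lower bound (and symmetrically for the $A^*A$ blocks). Combined with $|D^m_y(\nabla_y\phi(x,y)-\nabla_y\phi(x',y))|\lesssim|x-x'|$ for $m\ge1$ (differences of bounded order-$\ge2$ derivatives), your operator $L$ and the derivative count leading to $\|a\|_{C^{2d+1}}$ then go through.

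Two further corrections. The pointwise bound after $N$ integrations by parts cannot be $\lambda^{d-N}(1+|j-j'|)^{-N}$ uniformly on a cube pair: near the diagonal the kernel has size $\lambda^d$, and the correct bound is $\lambda^d(1+\lambda|x-x'|)^{-N}\|a\|_{C^N}^2$. Schur's test therefore already needs $N>d$ for the neighboring-cube blocks (harmless, since Cotlar--Stein summability forces $N\ge 2d+1$), while for distant cubes one uses $|x-x'|\gtrsim|j-j'|$ to recover $(1+|j-j'|)^{-N}$ decay when $\lambda\ge1$. Finally, your small-$\lambda$ remark is incorrect: since $a\in\mathcal{B}$ has no decay, the kernel is not absolutely integrable and a ``direct Schur estimate on the full operator'' proves nothing -- the $L^2$ bound is an oscillatory statement for every $\lambda\ne0$ (the rescaled Fourier transform already shows the norm does not become small). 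Either restrict to $\lambda$ bounded away from zero, which is the regime relevant to this paper, or rerun the argument on cubes of side comparable to $\lambda^{-1}$.
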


Let $\phi$ be a phase function. By the global inverse
function theorem, the maps 
\[\chi_1 (x, y) = (y, -\partial_y \phi) \quad \text{and} \quad \chi_2(x, y) =
(x, \partial_x \phi )
\]
are diffeomorphisms of
$\mf{R}^d \times \mf{R}^d$. It follows that the relation
\[
(y, -\partial_y \phi ) \mapsto (x, y) \mapsto (x, \partial_x \phi)
\]
defines a diffeomorphism 
\[
\chi = \chi_2 \circ \chi_1^{-1}: \mf{R}^d_y \times \mf{R}^d_\eta \to
\mf{R}^d_x \times \mf{R}^d_\xi,
\]
which preserves the standard symplectic form $d\xi \wedge dx$.  The
map $\chi(y, \eta) = (x(y, \eta), \xi(y,\eta))$ is the canonical
transformation generated by the phase function $\phi(x,y)$.

For a smooth symbol $p \in \mcal{B}_k(\mf{R}^d_x \times
\mf{R}^d_{\theta} \times \mf{R}^d_{y})$ and $\lambda \ne 0$, let
$\opn{Op}(p, \lambda)$ denote the (semiclassical) pseudodifferential operator 
\[
\opn{Op}(p, \lambda) f(x) = \left(\fr{\lambda}{2\pi}\right)^d \iint
e^{i\lambda (x-y) \theta} p(x,
\theta, y) f(y) \, dy d\theta.
\]
One has an Egorov theorem:
\begin{thm}[{\cite[Theorem 6.1]{asada-fujiwara}}]
\label{thm:egorov}
Let
$\chi : \mf{R}^{2d} \to \mf{R}^{2d}$ be the canonical transformation
generated by a phase function $\phi(x,y)$, and consider a Fourier
integral operator $A(\lambda)$ with phase $\phi$ and amplitude $a$. Let
$p(x, \theta, y), \ q(x, \theta, y) \in \mcal{B}_1(\mf{R}^d \times
\mf{R}^d \times \mf{R}^d)$ be such that 
\[
q(y, \eta, y) = p(x, \xi, x) | _{(x, \xi) = \chi(y, \eta)}
\]
Then 
\[
\opn{Op}(\lambda p, \lambda) A(\lambda) - A(\lambda) \opn{Op}(\lambda q,\lambda) =  R(\lambda),
\]
for some Fourier integral operator  $R(\lambda)$ with phase function
$\phi$. The operator norm of $R(\lambda)$ satisfies
\[
\|R(\lambda)\|_{L^2 \to L^2} \lesssim \lambda^{-1} \| a\|_{0, M} (
\|p\|_{1, M} + \|q\|_{1, M})
\]
for some positive integer $M$, where
$
\|f\|_{r, s} = \sup_{r \le k \le s} \| \nabla^k f\|_{L^\infty}.
$
\end{thm}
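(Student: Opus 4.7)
The plan is to recast both products $\opn{Op}(\lambda p, \lambda) A(\lambda)$ and $A(\lambda) \opn{Op}(\lambda q, \lambda)$ as Fourier integral operators with the common phase $\phi(x,y)$, identify their principal amplitudes, and then use the compatibility hypothesis to cancel them. The remainder will be an FIO whose amplitude carries the advertised $\lambda^{-1}$ gain, and Fujiwara's $L^2$-boundedness (Theorem~\ref{thm:fujiwara_bddness}) will convert this into the stated operator-norm estimate.

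First I would perform a Kuranishi-type change of variables to put the left composition into $\phi$-FIO form. Its kernel contains the compound phase $\lambda[(x-z)\theta + \phi(z,y)]$; writing
\[
\phi(z,y) = \phi(x,y) + (z-x)\cdot \Psi(x,z,y), \qquad \Psi(x,z,y) := \int_0^1 \partial_x \phi(x+t(z-x), y) \, dt,
\]
the compound phase becomes $\lambda \phi(x,y) - \lambda (z-x)\cdot(\theta - \Psi(x,z,y))$. Substituting $\tilde\theta = \theta - \Psi$ factors out $e^{i\lambda\phi(x,y)}$ and exhibits the composition as
\[
\opn{Op}(\lambda p, \lambda) A(\lambda) f(x) = \left(\fr{\lambda}{2\pi i}\right)^{\fr{d}{2}} \int e^{i\lambda \phi(x,y)} b_1(x,y,\lambda) f(y) \, dy,
\]
where $b_1$ is an oscillatory integral over $(z, \tilde\theta)$ with nondegenerate quadratic phase $-(z-x)\cdot\tilde\theta$. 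An analogous manipulation, using the Taylor expansion of $\phi(x,z)$ about $z=y$, brings $A(\lambda)\opn{Op}(\lambda q, \lambda)$ into the same form with amplitude $b_2$.

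Next I would apply a semiclassical stationary-phase expansion to $b_1$ and $b_2$, using the Asada-Fujiwara machinery adapted to $\mcal{B}_k$-class symbols. For $b_1$, the unique stationary point is $(z, \tilde\theta) = (x, 0)$, at which $\tilde\theta + \Psi = \partial_x \phi(x,y)$; the leading contribution is therefore $\lambda \, a(x,y) \, p(x, \partial_x \phi(x,y), x)$. Similarly, $b_2$ has leading part $\lambda\, a(x,y)\, q(y, -\partial_y \phi(x,y), y)$. By the definition of $\chi$, the hypothesis $q(y,\eta,y) = p(x,\xi,x)|_{(x,\xi)=\chi(y,\eta)}$ reads exactly $p(x, \partial_x\phi(x,y), x) = q(y, -\partial_y\phi(x,y), y)$, so the two principal amplitudes coincide and cancel in $R(\lambda)$. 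The surviving amplitude depends only on first (and higher) derivatives of $p$ and $q$ evaluated near the stationary point, which explains why the final estimate features $\|p\|_{1,M}$ and $\|q\|_{1,M}$ rather than sup-norms of $p$ and $q$ themselves.

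The main obstacle is to carry out the stationary-phase asymptotics with \emph{uniform-in-$\lambda$} control on the remainder, given that the phase $\phi$ lies only in $\mcal{B}_2$ (so $\phi$ itself may grow quadratically and $\nabla\phi$ linearly). The nondegeneracy condition~\eqref{eqn:fio_tech_nondegeneracy} together with global boundedness of $D^2\phi$ is what makes this feasible; one must track how the Kuranishi change of variables perturbs the amplitude in a controlled way and verify that the subleading amplitude remains in the class $\mcal{B}$ to which Theorem~\ref{thm:fujiwara_bddness} applies. The gain of $\lambda^{-1}$ in the final bound reflects one full step in the semiclassical expansion past the (cancelled) principal symbol.
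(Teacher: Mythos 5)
You should note at the outset that the paper contains no proof of this statement: it is quoted (with a change of normalization) from Asada--Fujiwara, so the only meaningful comparison is with the original argument. Your outline --- Kuranishi reduction of both compositions to oscillatory integral operators with the common phase $\phi$, cancellation of the principal amplitudes via the identity $p(x,\partial_x\phi(x,y),x)=q(y,-\partial_y\phi(x,y),y)$ encoded in the canonical relation, and Theorem~\ref{thm:fujiwara_bddness} applied to what survives --- is indeed the standard route and is essentially how such Egorov-type statements are proved. Two points you mention only in passing are where the actual work lies. First, since $p,q\in\mcal{B}_1$ may grow linearly, the compositions $\opn{Op}(\lambda p,\lambda)A(\lambda)$ and $A(\lambda)\opn{Op}(\lambda q,\lambda)$ are not themselves bounded FIOs, so Fujiwara's theorem can be invoked only \emph{after} the cancellation. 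Second, because of that growth one does not run a full stationary-phase expansion but rather a first-order Taylor expansion of the symbol in $\theta$ about $\theta=\Psi$, converting the factor $(\theta-\Psi)$ into $\lambda^{-1}$ times a $\theta$-derivative by integration by parts in the nondegenerate $(z,\tilde\theta)$ integral; this is exactly why only the seminorms $\|p\|_{1,M},\|q\|_{1,M}$ appear and why the surviving amplitude lands back in $\mcal{B}$, and it needs to be carried out explicitly rather than asserted.

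The genuine gap is your $\lambda$-bookkeeping at the end. With the normalization $\opn{Op}(\lambda p,\lambda)$ appearing in the statement, the principal amplitudes you cancel are of size $\lambda$ (they are $\lambda\,a(x,y)\,p(x,\partial_x\phi(x,y),x)$ and $\lambda\,a(x,y)\,q(y,-\partial_y\phi(x,y),y)$), so one step of the expansion beyond them leaves an amplitude of size $O(1)$ built from first derivatives of $a$, $p$, $q$ --- not $O(\lambda^{-1})$. Your argument therefore yields $\|R(\lambda)\|_{L^2\to L^2}\lesssim\|a\|_{0,M}(\|p\|_{1,M}+\|q\|_{1,M})$, and the extra factor $\lambda^{-1}$ you claim would require the subprincipal terms of the two compositions to cancel as well, which is false in general. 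Concretely, in $d=1$ take $\phi(x,y)=xy$, $a(x,y)=\sin x$, $p(x,\theta,y)=\theta$, $q(x,\theta,y)=y$; then $\chi(y,\eta)=(-\eta,y)$ so the compatibility hypothesis holds, $\opn{Op}(\lambda p,\lambda)=D$, $\opn{Op}(\lambda q,\lambda)=\lambda X$, and the difference is $R(\lambda)f(x)=(\lambda/2\pi i)^{1/2}\int e^{i\lambda xy}(-i\cos x)f(y)\,dy$, whose operator norm equals $1$ for every $\lambda$, while $\lambda^{-1}\|a\|_{0,M}(\|p\|_{1,M}+\|q\|_{1,M})\to 0$. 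So the $\lambda^{-1}$ gain belongs with the normalization $\opn{Op}(p,\lambda)$ (i.e.\ divide both symbols by $\lambda$), and as stated with $\opn{Op}(\lambda p,\lambda)$ the correct conclusion of your argument --- and the $O(1)$ bound that suffices for the application in Lemma~\ref{lma:strong_conv_unif_bddness} --- is the one without the factor $\lambda^{-1}$. Either restate the theorem in the $\opn{Op}(p,\lambda)$ normalization before claiming that gain, or drop the factor from what your proof is meant to deliver; as written, the last sentence of your sketch asserts an estimate the preceding construction does not (and cannot) produce.
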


\section{Local Theory}
\label{section:local_theory}

We record some standard local-wellposedness results for
\eqref{eqn:nls_approx_quadratic}. These are immediate analogues of the
local theory for the scale-invariant equation
\eqref{eqn:nls_wo_potential}, as detailed in the
lecture notes~\cite{claynotes}. Essentially the same proofs work here with the help of~\ref{lma:sobolev_embedding} and
Corollaries \ref{cor:fractional_chain_rule},
\ref{cor:fractional_product_rule}.

\begin{prop}[Local wellposedness]
\label{prop:lwp}
Let $u_0 \in \Sigma(\mf{R}^d)$ and fix a compact time interval $0 \in I \subset 
\mf{R}$. Then there 
exists a constant $\eta_0 = \eta_0(d, |I|)$ such that whenever $\eta < \eta_0$ 
and 
\[
\| H^{\fr{1}{2}} e^{-itH} u_0 \|_{L^{\fr{2(d+2)}{d-2}}_t 
L^{\fr{2d(d+2)}{d^2+4}}_x (I \times \mf{R}^d)} 
\le \eta,
\]
there exists a unique solution $u: I \times \mf{R}^d \to \mf{C}$ to 
\eqref{eqn:nls_approx_quadratic} which satisfies the bounds
\begin{align*}
\| H^{\fr{1}{2}}  u \|_{L^{\fr{2(d+2)}{d-2}}_t L^{\fr{2d(d+2)}{d^2+4}}_x (I 
\times \mf{R}^d)} \le 2\eta \quad \text{and} \quad
\| H^{\fr{1}{2}} u \|_{S(I)} \lesssim \| u_0\|_{\Sigma} + 
\eta^{\fr{d+2}{d-2}}.
\end{align*}
\end{prop}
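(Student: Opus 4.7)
The plan is to carry out a standard contraction-mapping argument for the Duhamel integral formulation
\[
\Phi(u)(t) = e^{-itH} u_0 - i\mu \int_0^t e^{-i(t-s)H} F(u(s)) \, ds, \quad F(z) = |z|^{4/(d-2)} z,
\]
on a closed ball in the Banach space $X(I)$ of functions $u : I \times \mf{R}^d \to \mf{C}$ with both $\| H^{1/2} u \|_{S(I)}$ and $\| H^{1/2} u \|_{Y(I)}$ finite, where $Y(I) = L^{2(d+2)/(d-2)}_t L^{2d(d+2)/(d^2+4)}_x$ is the ``critical'' Strichartz space tailored to the hypothesis. Specifically, I would work in the subset $B \subset X(I)$ cut out by $\|H^{1/2} u\|_{Y(I)} \le 2\eta$ and $\|H^{1/2} u\|_{S(I)} \le C(\|u_0\|_{\Sigma} + \eta^{(d+2)/(d-2)})$ for a suitable $C$ depending only on $d$ and $|I|$.

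The first step is to apply the Strichartz inequality of Lemma~\ref{lma:strichartz} to $\Phi(u)$, which reduces the matter to bounding $\|H^{1/2} F(u)\|_{N(I)}$, since the linear term $H^{1/2} e^{-itH} u_0$ is controlled by $\|u_0\|_{\Sigma}$ in $S(I)$ (thanks to the commutation of $H^{1/2}$ with the propagator and unitarity on $L^2$) and by $\eta$ in $Y(I)$ by the smallness assumption. Next, the fractional chain rule of Corollary~\ref{cor:fractional_chain_rule} applied with $\gamma = 1/2$, together with H\"{o}lder's inequality in spacetime, yields
\[
\|H^{1/2} F(u)\|_{N(I)} \lesssim \|u\|_{L^{2(d+2)/(d-2)}_{t, x}(I \times \mf{R}^d)}^{4/(d-2)} \,\|H^{1/2} u\|_{Y(I)}.
\]
The Sobolev embedding of Lemma~\ref{lma:sobolev_embedding} then upgrades $\|u\|_{L^{2(d+2)/(d-2)}_{t,x}}$ to $\|H^{1/2} u\|_{Y(I)}$, producing a total weight of $\eta^{(d+2)/(d-2)}$ on the right-hand side. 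Choosing $C$ appropriately and $\eta$ small, both ball conditions close and $\Phi$ maps $B$ to itself.

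For contraction, I would bound $|F(u) - F(v)| \lesssim (|u|^{4/(d-2)} + |v|^{4/(d-2)}) |u - v|$ and run the analogous Strichartz--H\"{o}lder--Sobolev chain on the difference $\Phi(u) - \Phi(v)$ in the single norm $\|u - v\|_{Y(I)}$ (no fractional derivative landing on the difference), using the ball constraints to produce a prefactor of order $\eta^{4/(d-2)}$. Taking $\eta \le \eta_0(d, |I|)$ forces this prefactor below $1/2$, giving a unique fixed point and the claimed bounds.

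In substance this is a direct translation of the standard proof for the scale-invariant equation~\cite{claynotes}, once one grants the tools of Section~\ref{subsection:notation}, so there is no genuine obstacle. The only point that differs from the pure dispersive setting is the $|I|$-dependence of the Strichartz constant in Lemma~\ref{lma:strichartz}, which is precisely the reason $\eta_0$ is permitted to depend on $|I|$; since all the remaining estimates are multiplicative in $\eta$, this dependence does not interfere with the fixed-point scheme.
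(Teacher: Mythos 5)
Your proposal is correct and is essentially the paper's own proof: the paper simply notes that the standard contraction-mapping argument for the scale-invariant equation (as in \cite{claynotes}) carries over verbatim once one has Lemma~\ref{lma:strichartz}, Lemma~\ref{lma:sobolev_embedding}, and Corollaries~\ref{cor:fractional_chain_rule}--\ref{cor:fractional_product_rule}, which is exactly the scheme you execute, including the contraction in the undifferentiated norm and the $|I|$-dependence entering only through the Strichartz constant.
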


\begin{cor}[Blowup criterion]
\label{cor:blowup_criterion}
Suppose $u : (T_{\text{min}}, T_{\text{max}}) \times \mf{R}^d \to \mf{C}$ is a 
maximal lifespan 
solution to \eqref{eqn:nls_approx_quadratic}, and fix $T_{\text{min}} < t_0 < T_{\text{max}}$. 
If $T_{\text{max}} < 
\infty$, then 
\[
\| u\|_{L^{\fr{2(d+2)}{d-2}}_{t, x}( [t_0, T_{\text{max}} ))} = \infty.
\]
If $T_{\text{min}} > -\infty$, then
\[
\| u \|_{L^{\fr{2(d+2)}{d-2}}_{t, x} ( (T_{\text{min}}, t_0])} = \infty.
\]
\end{cor}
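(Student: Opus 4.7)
The plan is to argue by contrapositive: assuming $T_{\text{max}} < \infty$ and $\|u\|_{L^{2(d+2)/(d-2)}_{t,x}([t_0, T_{\text{max}}))} < \infty$, I will extend $u$ past $T_{\text{max}}$ using the local theory, contradicting maximality. The case of $T_{\text{min}}$ follows by reversing time. My three steps are: (i) bootstrap to a global Strichartz bound on $[t_0, T_{\text{max}})$, (ii) upgrade this to a strong $\Sigma$-limit of $u(t)$ as $t \uparrow T_{\text{max}}$, and (iii) use the local theory to extend past $T_{\text{max}}$.

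For step (i), I partition $[t_0, T_{\text{max}})$ into finitely many sub-intervals $J_k = [\tau_k, \tau_{k+1}]$ on which $\|u\|_{L^{2(d+2)/(d-2)}_{t,x}(J_k)} \le \eta$, with $\eta$ to be fixed small. Strichartz (Lemma~\ref{lma:strichartz}) applied to Duhamel's formula, together with the fractional chain rule (Corollary~\ref{cor:fractional_chain_rule}), yields
\[
\|H^{1/2} u\|_{S(J_k)} \lesssim \|u(\tau_k)\|_\Sigma + \eta^{4/(d-2)} \|H^{1/2} u\|_{S(J_k)}.
\]
Absorbing the second term for small $\eta$ and iterating over the finite partition give both $\sup_{t \in [t_0, T_{\text{max}})} \|u(t)\|_\Sigma < \infty$ and finiteness of $\|H^{1/2} u\|_{L^{2(d+2)/(d-2)}_t L^{2d(d+2)/(d^2+4)}_x([t_0, T_{\text{max}}))}$.

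For step (ii), I apply Strichartz on sub-intervals $[t_1, t_2] \subset [t_0, T_{\text{max}})$ to obtain
\[
\|u(t_2) - e^{-i(t_2 - t_1)H} u(t_1)\|_\Sigma \lesssim \|u\|_{L^{2(d+2)/(d-2)}_{t,x}([t_1, t_2])}^{4/(d-2)} \|H^{1/2} u\|_{L^{2(d+2)/(d-2)}_t L^{2d(d+2)/(d^2+4)}_x([t_1, t_2])},
\]
whose right-hand side tends to zero as $t_1, t_2 \uparrow T_{\text{max}}$ by absolute continuity. Combined with the strong continuity of $e^{-itH}$ on $\Sigma$, this forces $u(t)$ to converge in $\Sigma$ to some limit $u_* \in \Sigma$.

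The main subtlety I anticipate lies in step (iii): being energy-critical, the lifespan in Proposition~\ref{prop:lwp} is not simply a function of $\|u_*\|_\Sigma$ but instead requires smallness of $\|H^{1/2} e^{-itH} u_*\|$ in the appropriate space-time Strichartz norm. Fixing any reference interval $I_0 \ni 0$ with its constant $\eta_0(d, |I_0|)$, dominated convergence shows that this Strichartz norm vanishes as the time interval shrinks, so the smallness hypothesis of Proposition~\ref{prop:lwp} is met on some $[-\delta, \delta]$. Shifting time by $T_{\text{max}}$ and invoking uniqueness on $[T_{\text{max}} - \delta, T_{\text{max}})$, the resulting local solution extends $u$ beyond $T_{\text{max}}$, yielding the desired contradiction.
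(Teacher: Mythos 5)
Your proposal is correct and follows essentially the same route as the paper, which does not write out a proof but delegates Corollary~\ref{cor:blowup_criterion} to the standard energy-critical local theory (via \cite{claynotes}), i.e.\ exactly the subdivision--bootstrap--extension argument you give using Lemma~\ref{lma:strichartz}, Corollary~\ref{cor:fractional_chain_rule}, and Proposition~\ref{prop:lwp}. The points you gloss---the a priori finiteness needed to absorb on the final half-open subinterval (handled by working on compact subintervals and letting the endpoint tend to $T_{\text{max}}$), and the fact that the $\Sigma$-limit of $u(t)$ really comes from $e^{-itH}$ preserving the $Q(H)$ norm rather than strong continuity alone---are routine and do not affect the argument.
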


\begin{prop}[Stability]
\label{prop:stability}
Fix $t_0 \in I \subset \mf{R}$ an interval of unit length and let $\tilde{u}: 
I \times \mf{R}^d \to 
\mf{C}$ be 
an approximate solution to \eqref{eqn:nls_approx_quadratic} in the 
sense that 
\[
i\partial_t \tilde{u} = H u \pm |\tilde{u}|^{\fr{4}{d-2}} \tilde{u} + e
\]
for some function $e$. Assume that 
\begin{equation}
\label{eqn:stability_prop_hyp_1} 
\| \tilde{u}\|_{L^{\fr{2(d+2)}{d-2}}_{t,x}} \le L,
\quad \| H^{\fr{1}{2}}u \|_{L^\infty_t L^2_x} \le E,
\end{equation}
and that for some $0 < \varepsilon < \varepsilon_0(E, L)$ one has
\begin{equation}
\label{eqn:stability_prop_hyp_2} 
\| H^{1/2}(\tilde{u}(t_0) - u_0) \|_{L^2} + \| H^{\fr{1}{2}} e \|_{N(I)} \le \varepsilon,
\end{equation}
Then there exists a unique solution $u : I \times \mf{R}^d 
\to \mf{C}$ to \eqref{eqn:nls_approx_quadratic} with $u(t_0) = u_0$ 
and which further satisfies 
the estimates
\begin{equation}
\| \tilde{u} -  u \|_{L^{\fr{2(d+2)}{d-2}}_{t,x} } +
\| H^{\fr{1}{2}} (\tilde{u} - u) \|_{S(I)} \lesssim C(E, L) 
\varepsilon^c
\end{equation}
where $0 < c = c(d) < 1$ and $C(E, L)$ is a function which is nondecreasing
in each variable.
\end{prop}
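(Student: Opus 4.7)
The plan is to mirror the standard stability argument for the scale-invariant energy-critical NLS (as presented in, e.g., \cite{claynotes}), replacing $|\nabla|$ throughout by $H^{1/2}$. All the functional-analytic ingredients needed---Strichartz on unit-length intervals (Lemma~\ref{lma:strichartz}), Sobolev embedding for $H^{\gamma}$ (Lemma~\ref{lma:sobolev_embedding}), and the fractional chain and product rules for $H^{\gamma}$ (Corollaries~\ref{cor:fractional_chain_rule} and~\ref{cor:fractional_product_rule})---are already in hand, and the hypothesis $|I| = 1$ keeps the Strichartz constant uniformly bounded. The sign $\pm$ in front of the nonlinearity plays no role, so both cases may be treated simultaneously.

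The first step is to partition $I$ into $J = J(L, \eta)$ subintervals $I_j = [t_j, t_{j+1}]$ such that
\[
\|\tilde u\|_{L^{2(d+2)/(d-2)}_{t,x}(I_j)} \le \eta,
\]
where $\eta = \eta(E)$ is a small parameter to be chosen. On each $I_j$, the difference $w = u - \tilde u$ satisfies
\[
(i\partial_t - H) w = \mp\bigl[F(\tilde u + w) - F(\tilde u)\bigr] - e, \qquad F(z) = |z|^{4/(d-2)} z,
\]
so Strichartz gives
\[
\|H^{1/2} w\|_{S(I_j)} \lesssim \|H^{1/2} w(t_j)\|_{L^2} + \bigl\|H^{1/2}[F(\tilde u + w) - F(\tilde u)]\bigr\|_{N(I_j)} + \|H^{1/2} e\|_{N(I_j)}.
\]
Using the fractional chain and product rules together with Sobolev embedding, the nonlinear term is controlled by a small multiple of $\|H^{1/2} w\|_{S(I_j)}$ provided $\eta$ is taken small compared to $E$. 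A standard continuity argument on $I_j$ then yields $\|H^{1/2} w\|_{S(I_j)} \lesssim \|H^{1/2} w(t_j)\|_{L^2}^{c} + \|H^{1/2} e\|_{N(I)}^{c}$ for some $c = c(d) \in (0, 1]$. Since $\|H^{1/2} w(t_{j+1})\|_{L^2} \le \|H^{1/2} w\|_{S(I_j)}$, iterating this bound across the $J$ subintervals gives the claimed estimate with $C(E, L)$ growing in $J = J(E, L)$.

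The principal technical subtlety lies in the nonlinear estimate for $H^{1/2}[F(\tilde u + w) - F(\tilde u)]$. When $d \ge 7$ the power $4/(d-2) < 1$ makes $F$ only H\"older continuous, so one cannot simply write $F(\tilde u + w) - F(\tilde u) = w \int_0^1 F'(\tilde u + sw)\, ds$ and apply the chain rule directly. Instead one splits $F(\tilde u + w) - F(\tilde u) = F'(\tilde u) w + [F(\tilde u + w) - F(\tilde u) - F'(\tilde u) w]$ and estimates the remainder by a case analysis on the relative size of $|w|$ and $|\tilde u|$, which is the source of the fractional exponent $c(d) < 1$. Crucially, this part of the argument is purely about the structure of the nonlinearity and is insensitive to the presence of $V$, so the dimension-by-dimension analysis in \cite{claynotes} transfers verbatim once one uses the $H^{1/2}$-based chain, product, and Sobolev estimates in place of their $(-\Delta)^{1/2}$-based counterparts.
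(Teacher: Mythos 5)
Your proposal is correct and follows essentially the same route as the paper, which simply observes that the standard stability argument for the scale-invariant equation (as in \cite{claynotes}) transfers verbatim once $|\nabla|$ is replaced by $H^{1/2}$, using the Strichartz estimate on unit-length intervals, the Sobolev embedding of Lemma~\ref{lma:sobolev_embedding}, and the fractional chain and product rules of Corollaries~\ref{cor:fractional_chain_rule} and~\ref{cor:fractional_product_rule}. Your additional remarks on subdividing $I$, the bootstrap on each subinterval, and the H\"older-continuity issue for $d \ge 7$ are exactly the content of the cited standard proof.
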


\section{Concentration compactness}
\label{section:conc_compactness}




Let $0 \in I$ be a compact interval so that $|I| \le \delta_0$, where
$\delta_0$ is the constant in Theorem~\ref{thm:fujiwara_propagator}.
As a basic building block in our analysis, we need suitable profile
decompositions for the linear and nonlinear equations. The discussion
here focuses on the linear case which already contains most of the
subtleties. In view of the perturbative theory in
Section~\ref{section:local_theory}, we seek to characterize
initial data with nontrivial linear evolutions, i.e. which come close
to saturating the the Strichartz inequality
\[
\| e^{-itH} f\|_{L^{\fr{2(d+2)}{d-2}}_{t,x} (I \times \mf{R}^d)}
\lesssim \| H^{1/2} f\|_{L^2}.
\]

A substantial part of our previous work on the harmonic oscillator was
devoted to constructing profile decompositions for
$H = -\tfr{1}{2}\Delta + \tfr{1}{2}|x|^2$. We closely follow that
exposition but highlight a key technical difference in the
present setting. As
alluded to in the introduction, we must compare the linear evolutions
of a highly concentrated initial state under the propagators
$e^{-itH}$ and $e^{\fr{it\Delta}{2}}$ with and without a potential,
respectively (see Proposition~\ref{prop:strong_convergence}
below). For the harmonic oscillator we relied on the Mehler formula to
write
\[
e^{-itH} = m_t(x) e^{i\fr{i\sin(t)\Delta}{2}} m_t(x)
\]
where $m_t(x) = \exp (i(\tfr{\cos t - 1}{2\sin t}) x^2)$, which
clearly manifests the relation between the two propagators. Here, we
shall instead appeal to the general parametrix in
Theorem~\eqref{thm:fujiwara_propagator} and apply the estimates from
Section~\ref{subsection:fio}.



\begin{define}
\label{def:frame}
A \emph{frame} is a sequence $(t_n, x_n, N_n) \in I \times \mf{R}^d
\times 2^{\mf{N}}$ conforming to one of the following scenarios:
\begin{enumerate}
\item \label{enum:frame_1} $N_n \equiv 1, \ t_n \equiv 0$, and $x_n \equiv
  0$.
\item \label{enum:frame_2} $N_n \to \infty$ and $ N_n^{-1}
  V(x_n)^{1/2} \to r_\infty \in [0, \infty)$.
\end{enumerate}
\end{define}

\begin{rmk}
The quantity $N_n^{-1} V(x_n)^{1/2}$ is the analog of the ratio $N_n^{-1}|x_n|$
that was considered in~\cite{me_quadratic_potential}.
\end{rmk}

These parameters will specify the temporal center, spatial center, and
(inverse) length scale of a function. The hypothesis that $V$ grows
essentially quadratically ensures that $|x_n| \lesssim N_n$, which
reflects the fact that we only consider functions obeying some uniform
bound in $Q(H)$, and such functions cannot be centered arbitrarily far
from the origin. We need to augment the frame $\{(t_n, x_n, N_n)\}$
with an auxiliary parameter $N_n'$, which corresponds to a sequence of
spatial cutoffs adapted to the frame.
\begin{define}
\label{def:aug_frame}
An \emph{augmented frame} is a sequence $(t_n, x_n, N_n, N_n') \in I \times \mf{R}^d
\times 2^{\mf{N}} \times \mf{R}$ belonging to one of the following types:
\begin{enumerate}
\item \label{enum:aug_frame_1} $N_n \equiv 1, \ t_n \equiv 0, \ x_n \equiv
  0, \ N_n' \equiv 1$.
\item \label{enum:aug_frame_2} $N_n \to \infty, \ N_n^{-1}
  V(x_n)^{1/2} \to r_\infty \in [0, \infty)$, and either
\begin{enumerate}
\item \label{enum:aug_frame_2_1} $N_n' \equiv 1$ if  $r_\infty > 0$, or
\item \label{enum:aug_frame_2_2} 
$N_n^{1/2} \le N_n' \le N_n, \ N_n^{-1} V(x_n)^{1/2} (\tfr{N_n}{N_n'})
\to 0$, and $\tfr{N_n}{N_n'} \to \infty$ if $r_\infty = 0$.
\end{enumerate}
\end{enumerate}
\end{define}

Given an augmented frame $(t_n, x_n, N_n, N_n')$, we define scaling and
translation operators on functions of space and of spacetime by
\begin{equation}
\label{eqn:inv_strichartz_frameops_1}
\begin{split}
(G_n \phi)(x) &= N_n^{\fr{d-2}{2}} \phi(N_n(x-x_n))\\
 (\tilde{G}_n f)(t, x) &= N_n^{\fr{d-2}{2}} f(N_n^2(t - t_n), N_n(x-x_n)).
\end{split}
\end{equation}
We also define spatial cutoff operators $S_n$ by
\begin{equation}
\label{eqn:inv_strichartz_frameops_2}
S_n \phi = \left\{ \begin{array}{cc} \phi, & \text{for frames of
      type} \ \ref{enum:aug_frame_1} \quad (\text{i.e.} \ N_n \equiv 1),\\
\chi(\tfr{N_n}{N_n'} \cdot) \phi, &  \text{for
  frames of type} \ \ref{enum:aug_frame_2} \quad (\text{i.e.} \ N_n \to \infty),
\end{array}\right.
\end{equation}
where $\chi$ is a smooth compactly supported function equal to $1$ on
the ball $\{|x| \le 1\}$. The following mapping properties of these
operators are elementary:
\begin{equation}
\label{eqn:inv_strichartz_frameops_properties}
\begin{split}
\lim_{n \to \infty} S_n = I \ \text{strongly in} \ \dot{H}^1 \text{and in} \ Q(H),\\
\limsup_{n \to \infty} \| G_n\|_{Q(H) \to Q(H)} < \infty.
\end{split}
\end{equation}

The next technical lemma is the counterpart of
\cite[Lemma~\ref{quadratic-lma:approximation_lemma}]{me_quadratic_potential}
and is proved in the same manner (in particular we use the equivalence of norms
furnished by Proposition~\ref{prop:equivalence_of_norms}).
\begin{lma}[Approximation]
\label{lma:approximation_lemma}
Let $(q, r)$ be an admissible pair of exponents with $2 \le r <
d$, and let $\mcal{F} = \{(t_n, x_n, N_n, N_n')\}$ be an augmented
frame of type 2.
\begin{enumerate}
\item Suppose $\mcal{F}$ is of type \ref{enum:aug_frame_2_1} in Definition
  \ref{def:aug_frame}. Then for $\{f_n\} \subseteq L^q_t
  H^{1, r}_x(\mf{R} \times \mf{R}^d)$, we have
\[
\limsup_{n} \| H^{1/2} \tilde{G}_n S_n f_n \|_{L^q_t L^r_x} \lesssim
\limsup_{n} \| f_n \|_{L^q_t H^{1,r}_x}. 
\]
\item Suppose $\mcal{F}$ is of type \ref{enum:aug_frame_2_2} and $f_n \in L^q_t
  \dot{H}^{1,r}_x (\mf{R} \times \mf{R}^d)$. Then
\[
\limsup_{n} \| H^{1/2} \tilde{G}_n S_n f_n \|_{L^q_t L^r_x} \lesssim 
\limsup_{n} \| f_n \|_{L^q_t \dot{H}^{1,r}_x}.
\]
\end{enumerate}
Here $H^{1, r}(\mf{R}^d)$ and $\dot{H}^{1, r}(\mf{R}^d)$ denote the
inhomogeneous and homogeneous $L^r$ Sobolev spaces, respectively,
equipped with the norms 
\[
\| f\|_{H^{1, r}} = \| \langle \nabla \rangle\|_{L^r(\mf{R}^d)}, \quad
\|f\|_{\dot{H}^{1, r}} = \| |\nabla| f\|_{L^r(\mf{R}^d)}.
\]
\end{lma}

We come to the main results of this section.
\begin{prop}[Inverse Strichartz]
\label{prop:inv_strichartz}
Let $I$ be a compact interval containing $0$ of length at most $\delta_0$, and
suppose $f_n$ is a sequence of functions in $Q(H)$ satisfying
\[
0 < \varepsilon \le \| e^{-itH} f_n\|_{L^{\fr{2(d+2)}{d-2}}_{t,x}(I
  \times \mf{R}^d)}
\lesssim \|H^{1/2}f_n\|_{L^2} \le A < \infty.
\]
Then, after passing to a subsequence, there exists an augmented frame 
\[
\mcal{F} =
\{ (t_n, x_n, N_n, N_n') \}
\]
and a sequence of functions $\phi_n\in
Q(H)$ such that one of the following holds:
\begin{enumerate}
\item \label{enum:inv_strichartz_case_1} $\mcal{F}$ is of type
  \ref{enum:aug_frame_1} (i.e. $N_n \equiv 1$) and $\phi_n = \phi$ where $\phi \in Q(H)$
  is a weak limit of $f_n$ in $Q(H)$.
\item \label{enum:inv_strichartz_case_2} $\mcal{F}$ is of type
  \ref{enum:aug_frame_2}, either $t_n \equiv 0$ or $N_n^2 t_n \to \pm
  \infty$, and $\phi_n = e^{it_n H} G_n S_n \phi$
  where 
$\phi \in \dot{H}^1(\mf{R}^d)$ is a weak limit of $G_n^{-1} e^{-it_n H} f_n$ in
$\dot{H}^1$. Moreover, if $\mcal{F}$ is of type
\ref{enum:aug_frame_2_1}, then $\phi$ also belongs to $L^2(\mf{R}^d)$.
\end{enumerate}
The functions $\phi_n$ have the following properties:
\begin{gather}
 \liminf_{n} \| H^{1/2} \phi_n \|_{L^2} \gtrsim A \left( \tfr{\varepsilon}{A} 
\right)^{\fr{d(d+2)}{8}} \label{eqn:inv_strichartz_nontriviality_in_Sigma}\\
 \lim_{n \to \infty} \|f_n\|_{L^{\fr{2d}{d-2}}}^{\fr{2d}{d-2}} - \|f_n - 
\phi_n\|_{L^{\fr{2d}{d-2}}}^{\fr{2d}{d-2}} - \| \phi_n 
\|_{L^{\fr{2d}{d-2}}}^{\fr{2d}{d-2}}  = 0. \label{eqn:inv_strichartz_potential_energy_decoupling}\\
 \lim_{n \to \infty}  \|H^{1/2}f_n\|_{L^2}^2 - \|  H^{1/2}(f_n - \phi_n)\|_{L^2}^2 - 
\| H^{1/2}\phi_n \|_{L^2}^2  = 0 \label{eqn:inv_strichartz_decoupling_in_Sigma}
\end{gather}

\end{prop}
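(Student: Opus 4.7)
The plan is to follow the template of the inverse Strichartz proof in \cite{me_quadratic_potential}, the key change being to replace applications of Mehler's formula with direct analysis of the Fujiwara representation from Theorem \ref{thm:fujiwara_propagator}. The overall argument proceeds in four stages: (i) a refined Strichartz inequality that upgrades the bound on $\|e^{-itH} f_n\|_{L^{2(d+2)/(d-2)}_{t,x}}$ to the existence of a bubble of concentration; (ii) extraction of frame parameters $(t_n, x_n, N_n)$ by passing to a subsequence; (iii) construction of the profile $\phi$ as a weak limit; and (iv) verification of the decoupling identities.

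For stage (i), I would build Littlewood--Paley projectors $P_M = \psi(H/M^2)$ via the Borel functional calculus, which are bounded on every $L^p$ with $1 < p < \infty$ by Theorem \ref{thm:hebisch}, and use them to decompose $f_n$ dyadically. Interpolating the Strichartz bound against a weaker Besov-type norm then produces a dyadic scale $N_n$ and a point $(t_n, x_n) \in I \times \mf{R}^d$ at which $|e^{-it_n H} P_{N_n} f_n(x_n)| \gtrsim N_n^{(d-2)/2}$. After a subsequence, either $N_n$ is bounded (and may be normalized to $N_n \equiv 1$, $t_n \equiv 0$, $x_n \equiv 0$ using the tightness furnished by the uniform $Q(H)$ bound), giving case \ref{enum:inv_strichartz_case_1}; or $N_n \to \infty$, in which case the uniform $Q(H)$ bound forces $|x_n| \lesssim N_n$ so that by hypothesis \ref{enum:potential_hyp_1} the ratio $N_n^{-1} V(x_n)^{1/2}$ is bounded, and one more subsequence arranges convergence to some $r_\infty \in [0, \infty)$.

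For stages (iii)--(iv) in the case $N_n \to \infty$, I would take $\phi$ to be a weak $\dot{H}^1$-limit of $g_n = G_n^{-1} e^{-it_n H} f_n$ and show nontriviality by testing $g_n$ against a unit-scale bump function. This is precisely the step where Mehler's formula was used in \cite{me_quadratic_potential}, and where the new ingredients enter: when $|t_n|$ stays bounded (so that we remain within the Fujiwara time window), Theorem \ref{thm:fujiwara_propagator} expresses $e^{-it_n H}$ as an oscillatory integral whose phase and amplitude, after conjugation by $G_n$, converge on compact sets to those of the free propagator $e^{it_n \Delta /2}$, because the subquadratic bound on $V$ makes all second and higher spatial derivatives negligible at scale $N_n^{-1}$. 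Theorems \ref{thm:fujiwara_bddness} and \ref{thm:egorov} then allow us to propagate test functions and pseudodifferential symbols through this comparison quantitatively, recovering the pairing estimates that were previously obtained from the explicit Mehler kernel. The alternative regime $N_n^2 t_n \to \pm\infty$ reduces to the bounded case by time translation, using the $Q(H)$-isometry of $e^{-itH}$ to absorb the translate into the initial data defining $\phi$.

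The dichotomy between subcases \ref{enum:aug_frame_2_1} and \ref{enum:aug_frame_2_2} reflects whether the rescaled potential has a nondegenerate limit: if $r_\infty > 0$, the profile $\phi$ automatically sits in $L^2$ and the cutoff $S_n \equiv 1$ suffices; if $r_\infty = 0$, the profile only lies in $\dot{H}^1$, so we introduce the auxiliary spatial cutoff $S_n$ at scale $N_n/N_n'$, with $N_n'$ chosen in the window $N_n^{1/2} \le N_n' \le N_n$ so that $G_n S_n \phi$ remains uniformly bounded in $Q(H)$ by \eqref{eqn:inv_strichartz_frameops_properties} while $S_n \to I$ strongly in $\dot{H}^1$. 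The decoupling identity \eqref{eqn:inv_strichartz_potential_energy_decoupling} follows from Brezis--Lieb applied to the $L^{2d/(d-2)}$ norm, and \eqref{eqn:inv_strichartz_decoupling_in_Sigma} follows from the weak convergence $g_n - G_n^{-1} \phi_n \rightharpoonup 0$ in $\dot{H}^1$ together with Proposition \ref{prop:equivalence_of_norms} and Lemma \ref{lma:approximation_lemma} to handle the $V^{1/2}$-weighted piece of $\|H^{1/2}\cdot\|_{L^2}^2$. The principal obstacle throughout is the quantitative comparison of $e^{-it_n H}$ with $e^{it_n \Delta/2}$ after the $G_n$-rescaling in the $N_n \to \infty$ regime: this is exactly where Mehler's formula was used before, and replacing it with the Fujiwara representation plus the Asada--Fujiwara oscillatory integral estimates is the substantive technical adjustment required by the present setting.
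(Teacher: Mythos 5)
Your proposal follows essentially the same route as the paper: Littlewood--Paley theory adapted to $H$ via Hebisch's multiplier theorem, a refined Strichartz inequality, frame extraction, and weak-limit profiles, with the only substantive new ingredient being the replacement of Mehler's formula by Fujiwara's oscillatory-integral representation together with the Asada--Fujiwara estimates to compare $e^{-it_nH}$ with the free flow on concentrated data --- exactly the strategy the paper carries out in Proposition~\ref{prop:strong_convergence}. The one slight imprecision is that after the $G_n$-conjugation the limiting operator is not the bare free propagator but carries the constant modulation $e^{-it_\infty r_\infty^2}$ arising from $t_nV(x_n)$, which is harmless for the weak limits and the decoupling statements claimed in this proposition.
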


\begin{proof}
We recall that the proof of the analogous result
in~\cite[Section~\ref{quadratic-subsection:inv_strichartz}] {me_quadratic_potential} used the following ingredients:
\begin{itemize}
\item Littlewood-Paley theory adapted to the operator
  $H = - \tfr{1}{2}\Delta + \tfr{1}{2} |x|^2$, which depended on a
  spectral multiplier theorem (Theorem~\ref{thm:hebisch}).
  
\item A refined Strichartz inequality, proved using the Littlewood-Paley theory.
\item Convergence properties of \emph{equivalent} and
  \emph{orthogonal} frames, in particular, the comparison of the
  linear flows generated by the Hamiltonians for the free particle and
  the harmonic oscillator, when acting on concentrated initial
  data. It was here that we invoked the Mehler
  formula~\eqref{eqn:mehler}.

\end{itemize}

Once suitable analogues for these components are obtained, the rest of
the proof carries over without difficulty, and we refer the reader
to~\cite{me_quadratic_potential} for the details. Adapting the first
two to our situation requires little more than replacing all instances
of $\tfr{1}{2}|x|^2$ in the proofs with $V$. The third requires
elaboration, however, and will be the subject of the next section.
\end{proof}

\begin{prop}[Linear profile decomposition]
\label{prop:lpd}
Let $0 \in I$ be an interval with $|I| \le \delta_0$, and let $f_n$ be
a bounded sequence in $Q(H)$. After passing to a subsequence, there
exists $J^* \in \{0, 1, \dots\} \cup \{\infty\}$ such that for each
finite $1 \le j \le J^*$, there exist an augmented frame $\mcal{F}^j =
\{(t_n^j, x_n^j, N_n^j, (N_n^j)')\}$ and a function $\phi^j$ with the
following properties.
\begin{itemize}
\item Either $t_n^j \equiv 0$ or $(N_n^j)^2 (t_n^j) \to \pm \infty$ as
  $n \to \infty$.
\item $\phi^j$ belongs to $Q(H), \ H^1$, or $\dot{H}^1$ depending on
  whether $\mcal{F}^j$ is of type \ref{enum:aug_frame_1},
  \ref{enum:aug_frame_2_1}, or \ref{enum:aug_frame_2_2},
  respectively. 
\end{itemize}
For each finite $J \le J^*$, we have a decomposition 
\begin{equation}
\label{eqn:lpd_decomposition}
f_n = \sum_{j=1}^J e^{it_n^j H} G_n^j S_n^j \phi^j + r_n^J,
\end{equation}
where $G_n^j, \ S_n^j$ are the $\dot{H}^1$-isometry and spatial cutoff
operators associated to $\mcal{F}^j$. Writing $\phi^j_n$ for $e^{it_n^j H}
G_n^j S_n^j \phi^j$, this decomposition has the following properties:
\begin{gather}
\label{eqn:lpd_bubble_maximality}
(G_n^J)^{-1} e^{-it_n^J H} r_n^J \overset{\dot{H}^1}{\rightharpoonup}
0 \quad \text{for all} \ J \le J^*,\\
\label{eqn:lpd_sigma_decoupling}
\sup_{J} \lim_{n \to \infty} \Bigl | \| H^{1/2} f_n \|_{L^2}^2 - \sum_{j=1}^J \|H^{1/2}
   \phi^j_n\|_{L^2}^2 - \| H^{1/2} r_n^J\|_{L^2}^2 \Bigr | = 0,\\
\label{eqn:lpd_potential_energy_decoupling}
\sup_{J} \lim_{n \to \infty} \Bigl | \|f_n\|_{L^{\fr{2d}{d-2}}_x}^{\fr{2d}{d-2}}
  - \sum_{j=1}^J \| \phi^j_n\|_{L^{\fr{2d}{d-2}}_x}^{\fr{2d}{d-2}} -
  \| r_n^J \|_{L^{\fr{2d}{d-2}}_x}^{\fr{2d}{d-2}} \Bigr | = 0.
\end{gather}
Whenever $j \ne k$, the frames $\{ (t_n^j, x_n^j, N_n^j)\}$ and $\{ (t_n^k, x_n^k,
N_n^k)\}$ are orthogonal:
\begin{equation}
\label{eqn:lpd_orthogonality_of_frames}
\lim_{n \to \infty} \tfr{N_n^j}{N_n^k} + \tfr{N_n^k}{N_n^j} + N_n^j N_n^k |t_n^j
- t_n^k| + \sqrt{N_n^j N_n^k} |x_n^j - x_n^k| = \infty.
\end{equation}
Finally, we have
\begin{equation}
\label{eqn:lpd_vanishing_of_remainder}
\lim_{J \to J^*} \limsup_{n \to \infty} \| e^{-it_n H} r_n^J
\|_{L^{\fr{2(d+2)}{d-2}}_{t,x}} = 0,
\end{equation}
\end{prop}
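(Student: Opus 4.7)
The plan is to iterate Proposition~\ref{prop:inv_strichartz} to extract bubbles one at a time. Set $r_n^0 = f_n$, and at stage $J$ let $\varepsilon_J = \limsup_n \| e^{-itH} r_n^{J-1} \|_{L^{2(d+2)/(d-2)}_{t,x}(I \times \mf{R}^d)}$. If $\varepsilon_J = 0$ terminate with $J^* = J-1$; otherwise, apply Proposition~\ref{prop:inv_strichartz} along a subsequence to produce an augmented frame $\mcal{F}^J$, a profile $\phi^J$ (in $Q(H)$, $H^1$, or $\dot H^1$ depending on the frame type), and $\phi_n^J = e^{it_n^J H} G_n^J S_n^J \phi^J$. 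Set $r_n^J = r_n^{J-1} - \phi_n^J$. The maximality property~\eqref{eqn:lpd_bubble_maximality} at stage $J$ holds because $(G_n^J)^{-1} e^{-it_n^J H} \phi_n^J \rightharpoonup \phi^J$ in $\dot H^1$ (using that $S_n \to I$ strongly, cf.~\eqref{eqn:inv_strichartz_frameops_properties}), while by definition $\phi^J$ is the weak limit of $(G_n^J)^{-1} e^{-it_n^J H} r_n^{J-1}$. A diagonal extraction produces a single subsequence valid for every $J$.

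Next, one proves orthogonality~\eqref{eqn:lpd_orthogonality_of_frames} by induction. Suppose $\mcal{F}^1,\ldots,\mcal{F}^{J-1}$ are pairwise orthogonal but $\mcal{F}^J$ fails to be orthogonal to some earlier $\mcal{F}^k$. After passing to a further subsequence, the composite $(G_n^J)^{-1} e^{-it_n^J H} e^{it_n^k H} G_n^k$ converges in the strong operator topology on $\dot H^1$ to a nontrivial unitary, so $(G_n^J)^{-1} e^{-it_n^J H} \phi_n^k$ cannot vanish weakly. Combined with the decomposition $r_n^{J-1} = r_n^{k-1} - \phi_n^k - \cdots - \phi_n^{J-1}$ and the already established maximality~\eqref{eqn:lpd_bubble_maximality} at stage $k$, this would force $\phi^J$ to carry a nontrivial contribution coming from $\phi^k$, contradicting the inductive hypothesis. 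The decouplings~\eqref{eqn:lpd_sigma_decoupling} and~\eqref{eqn:lpd_potential_energy_decoupling} then follow inductively from their single-profile versions in Proposition~\ref{prop:inv_strichartz}, with the cross terms controlled by orthogonality of frames and the approximation/mapping properties of Lemma~\ref{lma:approximation_lemma} and~\eqref{eqn:inv_strichartz_frameops_properties}.

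The main obstacle is establishing~\eqref{eqn:lpd_vanishing_of_remainder}. From the $Q(H)$-decoupling applied at each finite $J$ one obtains
\[
\sum_{j=1}^{J} \liminf_n \| H^{1/2} \phi_n^j \|_{L^2}^2 \;\le\; \limsup_n \|H^{1/2} f_n\|_{L^2}^2 \;\le\; A^2,
\]
so $\liminf_n \|H^{1/2} \phi_n^J\|_{L^2} \to 0$ as $J \to J^*$. The quantitative nontriviality~\eqref{eqn:inv_strichartz_nontriviality_in_Sigma} applied at stage $J$ forces
$\liminf_n \|H^{1/2} \phi_n^J\|_{L^2} \gtrsim A_J (\varepsilon_J / A_J)^{d(d+2)/8}$, where $A_J = \limsup_n \|H^{1/2} r_n^{J-1}\|_{L^2} \le A$, so $\varepsilon_J \to 0$ as $J \to J^*$, which is precisely~\eqref{eqn:lpd_vanishing_of_remainder}. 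The subtlest point of the whole argument is the strong-operator convergence needed in the orthogonality step, since Mehler's formula is no longer available; here one substitutes the Fujiwara integral representation of Theorem~\ref{thm:fujiwara_propagator} together with the oscillatory integral estimates of Section~\ref{subsection:fio}, as advertised in the introduction.
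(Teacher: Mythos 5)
Your route is the same as the paper's (which defers the details to \cite{me_quadratic_potential}): extract frames and profiles by iterating Proposition~\ref{prop:inv_strichartz}, get \eqref{eqn:lpd_bubble_maximality} from the fact that $\phi^J$ is by construction the weak limit of $(G_n^J)^{-1}e^{-it_n^JH}r_n^{J-1}$ together with $S_n^J\to I$, obtain \eqref{eqn:lpd_sigma_decoupling} and \eqref{eqn:lpd_potential_energy_decoupling} by telescoping the single-stage decouplings, and deduce \eqref{eqn:lpd_vanishing_of_remainder} by playing \eqref{eqn:inv_strichartz_nontriviality_in_Sigma} against the summability of $\liminf_n\|H^{1/2}\phi_n^j\|_{L^2}^2$; since $d(d+2)/8>1$, the bound $\liminf_n\|H^{1/2}\phi_n^J\|_{L^2}\gtrsim \varepsilon_J^{d(d+2)/8}A^{1-d(d+2)/8}$ does force $\varepsilon_J\to0$. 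All of that is correct and is exactly the intended argument, including the observation that the only genuinely new input is the material of Sections~\ref{subsection:fio}--\ref{subsection:convergence} replacing Mehler's formula.

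The one step that does not close as you wrote it is the orthogonality assertion \eqref{eqn:lpd_orthogonality_of_frames}. If $\mathcal{F}^J$ fails to be orthogonal to some $\mathcal{F}^k$ with $k<J$, the contradiction is not that ``$\phi^J$ carries a nontrivial contribution from $\phi^k$, contradicting the inductive hypothesis'': nothing in the inductive hypothesis (pairwise orthogonality of $\mathcal{F}^1,\dots,\mathcal{F}^{J-1}$) forbids such a contribution. The correct endgame is that equivalence of $\mathcal{F}^k$ and $\mathcal{F}^J$ forces $\phi^J=0$: write $r_n^{J-1}=r_n^{k}-\sum_{i=k+1}^{J-1}\phi_n^i$; maximality \eqref{eqn:lpd_bubble_maximality} at stage $k$ transfers, via the equivalence of the two frames, to $(G_n^J)^{-1}e^{-it_n^JH}r_n^k\rightharpoonup0$, while $(G_n^J)^{-1}e^{-it_n^JH}\phi_n^i\rightharpoonup0$ for $k<i<J$ by Corollary~\ref{cor:weak_conv_cor}, since $\mathcal{F}^i$ is orthogonal to $\mathcal{F}^k$ by induction and hence to the equivalent frame $\mathcal{F}^J$; thus the weak limit defining $\phi^J$ vanishes, contradicting \eqref{eqn:inv_strichartz_nontriviality_in_Sigma}. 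Note also that the transfer step is not a formal consequence of strong operator convergence of $(G_n^J)^{-1}e^{-it_n^JH}e^{it_n^kH}G_n^k$ (Proposition~\ref{prop:strong_convergence}): applying a strongly convergent sequence of operators to a weakly null sequence requires strong convergence of the adjoints, which is exactly what the approximate adjoint identity of Lemma~\ref{lma:approximate_adjoint}, combined with Proposition~\ref{prop:strong_convergence} for the reversed composite (legitimate because $t_n^J-t_n^k\to0$ for equivalent type~\ref{enum:aug_frame_2} frames), supplies. With these two repairs your sketch coincides with the paper's proof.
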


\begin{proof}
  The argument is similar to the one for as
  in~\cite[Proposition~\ref{quadratic-prop:lpd}]{me_quadratic_potential}. One
  inductively applies inverse Strichartz to extract the frames
  $\mcal{F}^j$ and profiles $\phi^j$. To prove the decoupling
  assertion~\eqref{eqn:lpd_orthogonality_of_frames}, one uses the
  convergence lemmas discussed in the next section, which completely
  parallel the ones used in~\cite{me_quadratic_potential}.
\end{proof}

\begin{rmk}
When $H = -\tfr{1}{2}\Delta + \tfr{1}{2}|x|^2$, one can
improve~\eqref{eqn:lpd_sigma_decoupling} to
\[
\begin{split}
&\Bigl | \|\nabla f_n \|_{L^2}^2 - \sum_{j=1}^J \| \nabla \phi_n^j \|_{L^2}^2 -
\| \nabla r_n^J\|_{L^2}^2 \Bigr | \to 0\\
 &\Bigl | \|x f_n \|_{L^2}^2 - \sum_{j=1}^J \| x \phi_n^j \|_{L^2}^2 -
\| x r_n^J\|_{L^2}^2 \Bigr | \to 0.
\end{split}
\]
The proof of this relies on the fact that $e^{-itH}$ conjugates $\nabla$
and $x$ according to Heisenberg's equations. We used this stronger
assertion to treat the focusing equation in an earlier draft
of~\cite{me_quadratic_potential}, but were unable to extend it to the
present setting.  It would be interesting to know whether one has
separate decoupling of kinetic and potential energies for more general
potentials.
\end{rmk}

\subsection{Convergence of linear propagators}
\label{subsection:convergence}

In this section we prove the key
Proposition~\ref{prop:strong_convergence}, which compares the linear
propagators $e^{\fr{it\Delta}{2}} \phi$ and $e^{it (\fr{\Delta}{2} - V)}
\phi$ for $\phi$ highly concentrated.


While the proposition is simply a translation of
\cite[Lemma~\ref{quadratic-lma:strong_convergence}]{me_quadratic_potential},
its proof is more involved and requires a closer study of the
underlying classical dynamics.

\begin{define}
\label{def:equiv_frames}
We say two frames $\mcal{F}^1 = \{ (t_n^1, x_n^1, N_n^1)\}$ and $\mcal{F}^2 =
\{ (t_n^2, x_n^2, N_n^2)\}$ (where the superscripts are indices, not
exponents) are \emph{equivalent} if
\[
\tfr{N_n^1}{N_n^2} \to R_\infty \in (0, \infty), \ N_n^1(x_n^2 - x_n^1) \to
x_\infty \in \mf{R}^d, \  (N_n^1)^2 (t_n^1 - t_n^2) \to
t_\infty \in \mf{R}.
\]
If any of the above statements fail, we say that $\mcal{F}_1$ and $\mcal{F}_2$
are \emph{orthogonal}. Note that replacing the  $N_n^1$ in the second and third 
expressions above by $N_n^2$ yields an equivalent definition of
orthogonality.

Two augmented frames $(t_n, x_n, N_n, N_n')$ and $(\tilde{t}_n,
\tilde{x}_n, \tilde{N}_n, \tilde{N}_n')$ are said to be equivalent if their underlying
frames $(t_n, x_n, N_n)$ and $(\tilde{t}_n, \tilde{x}_n, \tilde{N}_n)$
are equivalent.
\end{define}

\begin{prop}[Strong convergence]
\label{prop:strong_convergence}
Suppose $\mcal{F}^M = (t_n^M, x_n, M_n)$ and $\mcal{F}^N = (t_n^N,
y_n, N_n)$ are equivalent frames. Define
\begin{alignat*}{2}
&R_\infty = \lim_{n \to \infty} \tfr{M_n}{N_n} &\quad &t_\infty = \lim_{n \to
  \infty} M_n^2 (t_n^M - t_n^N), \\
 &x_\infty = \lim_{n \to \infty} \ M_n(y_n-x_n), &\quad &r_\infty = \lim_n M_n^{-1} V(x_n)^{1/2};
\end{alignat*}
(The last limit exists by the definition of a frame.) Let $G_n^M, G_n^N$ be the scaling and translation operators associated
with the frames $\mcal{F}^M$ and $\mcal{F}^N$ respectively. Then the sequence
$(e^{-it_n^N H} G_n^N)^{-1} e^{-it_n^M H} G_n^M$ converges in the
strong operator topology on $B(\Sigma, \Sigma)$ to the
operator $U_\infty$ defined by
\[
U_\infty \phi = e^{-it_\infty (r_{\infty})^2}
R_\infty^{\fr{d-2}{2}} [e^{ \fr{it_\infty \Delta}{2} } \phi]( R_\infty
\cdot + x_\infty).
\]
\end{prop}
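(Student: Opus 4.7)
The group property rewrites
\[
U_n := (e^{-it_n^N H} G_n^N)^{-1} e^{-it_n^M H} G_n^M = (G_n^N)^{-1} e^{-is_n H} G_n^M
\]
with $s_n = t_n^M - t_n^N$. If both $\mcal{F}^M$ and $\mcal{F}^N$ are of type~\ref{enum:frame_1}, then $M_n = N_n = 1$, $x_n = y_n = 0$, $t_n^M = t_n^N = 0$, so $U_n = U_\infty = I$. Otherwise equivalence rules out mixed types and both frames are of type~\ref{enum:frame_2}: $M_n, N_n \to \infty$ and $|s_n| = O(M_n^{-2}) \to 0$, placing $U_n$ within the small-time regime of Theorem~\ref{thm:fujiwara_propagator}.

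\textbf{Pointwise limit.} Substitute Fujiwara's representation into $U_n \phi(z)$ and change variables $y = x_n + M_n^{-1} w$, $x = y_n + N_n^{-1} z$ to obtain
\begin{equation*}
U_n \phi(z) = \frac{(M_n/N_n)^{(d-2)/2}}{(2\pi i \, s_n M_n^2)^{d/2}} \int_{\mf{R}^d} e^{i\Phi_n(z,w)} a_n(z,w)\, \phi(w)\, dw,
\end{equation*}
where $\Phi_n = |N_n^{-1}z + y_n - x_n - M_n^{-1}w|^2/(2 s_n) + s_n \omega(s_n, N_n^{-1}z + y_n, x_n + M_n^{-1}w)$ and $a_n$ is the rescaled amplitude. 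Assume first $t_\infty \ne 0$. The kinetic part of $\Phi_n$ converges uniformly on compacts of $(z,w)$ to $|R_\infty z + x_\infty - w|^2/(2t_\infty)$, using $M_n^2 s_n \to t_\infty$, $M_n/N_n \to R_\infty$, and $M_n(y_n - x_n) \to x_\infty$. For the correction, Taylor expand $\omega$ around $(x_n, x_n)$: Theorem~\ref{thm:fujiwara_propagator} bounds $|D^k \omega| \le C_k(1+|x|+|y|)^{\max(2-k,0)}$ uniformly, and together with $|x_n| \lesssim M_n$ (from $V(x_n)^{1/2}/M_n$ bounded and $V(x) \ge \delta|x|^2$) and shifts of size $M_n^{-1}$, the first-order and higher-order Taylor terms, after multiplication by $s_n$, contribute only $O(M_n^{-2})$. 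For the base term, integrating the Lagrangian along the short closed trajectory $x(0) = x(s_n) = x_n$, which has $\xi(0) = O(s_n |x_n|)$ by Hamilton's equations, gives $S(s_n, x_n, x_n) = -V(x_n) s_n + O(s_n^2 |x_n|^2)$, so $s_n \omega(s_n, x_n, x_n) \to -t_\infty r_\infty^2$. Combined with $a_n \to 1$ and the prefactor limit $R_\infty^{(d-2)/2}(2\pi i t_\infty)^{-d/2}$, this yields pointwise convergence $U_n \phi(z) \to U_\infty \phi(z)$ for Schwartz $\phi$.

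\textbf{Upgrading to strong convergence.} By density it suffices to treat Schwartz $\phi$; the requisite uniform bound $\|U_n \phi\|_\Sigma \lesssim \|\phi\|_\Sigma$ follows from regarding $U_n$ as a Fourier integral operator and applying Theorems~\ref{thm:fujiwara_bddness} and~\ref{thm:egorov}. Combine pointwise convergence with norm convergence and invoke Brezis-Lieb on each factor of $\Sigma$: the $L^2$ norm is immediate from the change of variables, $\|U_n \phi\|_{L^2} = (N_n/M_n)\|\phi\|_{L^2} \to R_\infty^{-1}\|\phi\|_{L^2}$; $\|X U_n \phi\|_{L^2}$ is handled by the same recentering; and $\|\nabla U_n \phi\|_{L^2}$ uses the $\dot{H}^1$-isometry of $(G_n^N)^{-1}$, the $Q(H)$-unitarity of $e^{-is_n H}$, and the identity $\|\nabla f\|_{L^2}^2 = 2\|H^{1/2} f\|_{L^2}^2 - 2\|V^{1/2} f\|_{L^2}^2$, ultimately reducing to the convergence $\|V^{1/2} G_n^M \phi\|_{L^2}^2 \to r_\infty^2 \|\phi\|_{L^2}^2$, which follows from the Taylor expansion of $V$ near $x_n$.

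\textbf{The case $t_\infty = 0$ and main obstacle.} When $t_\infty = 0$ the Fujiwara kernel degenerates, so split $U_n \phi = (G_n^N)^{-1} G_n^M \phi + (G_n^N)^{-1}[e^{-is_n H} - I] G_n^M \phi$. The first term converges in $\Sigma$ to $U_\infty \phi = R_\infty^{(d-2)/2} \phi(R_\infty \cdot + x_\infty)$ by pure scaling; for the second, Duhamel combined with $\|H G_n^M \phi\|_{L^2} \lesssim M_n^2$ and $s_n M_n^2 \to 0$ yields vanishing $\Sigma$-norm. The chief obstacle throughout is the phase analysis in the main case, where one must simultaneously track the decay $s_n \sim M_n^{-2}$, the growth $|x_n| \sim M_n$, and the qualitative --- not explicit --- bounds on the action $\omega$; this is precisely where Fujiwara's quantitative control $|D^k \omega| \le C_k(1+|x|+|y|)^{\max(2-k,0)}$ and the oscillatory integral toolkit of Asada-Fujiwara replace the closed-form Mehler formula used in~\cite{me_quadratic_potential}.
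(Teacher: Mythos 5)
Your setup — the reduction to $(G_n^N)^{-1}e^{-is_nH}G_n^M$, the rescaled Fujiwara representation, the identification of the limiting phase via $s_n\omega(s_n,x_n,x_n)\to -t_\infty r_\infty^2$ plus $O(M_n^{-2})$ Taylor remainders, and the uniform $\Sigma$-bound via Fujiwara/Asada--Fujiwara — parallels the paper, which packages exactly this information in Lemma~\ref{lma:refined_action}, the estimates \eqref{eqn:strong_conv_estimates}, and Lemma~\ref{lma:strong_conv_unif_bddness}. The divergence, and the gap, is in your final upgrading step. The paper never passes through norm convergence: it compares the two propagators at the level of kernels (writing the free evolution at time $M_n^2t_n$ as an oscillatory integral with the same $\lambda_n$), shows the kernel difference vanishes locally uniformly, gains $|x|^{-N}$ decay outside a ball by non-stationary phase, and concludes in $\Sigma$ by dominated convergence. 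You instead propose pointwise convergence plus convergence of the $L^2$, $\dot H^1$, and weighted norms, then Brezis--Lieb/Radon--Riesz componentwise. That architecture is legitimate, but the norm-convergence claims for the gradient and especially the weighted component are where the content lies, and your reductions drop the propagator $e^{-is_nH}$.

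Concretely: after the $\dot H^1$-isometry and $Q(H)$-unitarity, the quantity you must compute is $\|V^{1/2}e^{-is_nH}G_n^M\phi\|_{L^2}$, not $\|V^{1/2}G_n^M\phi\|_{L^2}$; this one is repairable by a commutator estimate showing the potential energy moves by $O(M_n^{-2})$ over times $|s_n|\lesssim M_n^{-2}$, but it is not the reduction you stated. The weighted component is the real problem: $\|XU_n\phi\|_{L^2}^2=N_n^4\,\bigl\| |z-y_n|\,e^{-is_nH}G_n^M\phi\bigr\|_{L^2}^2$, a quantity of size $M_n^{-4}$ that must be resolved to relative precision $o(1)$, and the correct limit is $R_\infty^{-4}\bigl\| |v-x_\infty|\,e^{it_\infty\Delta/2}\phi\bigr\|_{L^2}^2=R_\infty^{-4}\|(X-x_\infty+t_\infty D)\phi\|_{L^2}^2$, not $R_\infty^{-4}\||v-x_\infty|\phi\|_{L^2}^2$: over the window $|s_n|\sim |t_\infty|M_n^{-2}$ the packet spreads by an amount comparable to its own width $M_n^{-1}$, a unit-size effect after rescaling. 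So ``handled by the same recentering'' either computes the wrong limit or silently presupposes the very kernel analysis you are trying to avoid; the crude bound $\|[X,e^{-is_nH}]\|_{\Sigma\to L^2}=O(s_n)$ produces an error of the same order as the main term and cannot be discarded. A fix within your framework is a first-order Heisenberg/Egorov expansion $e^{is_nH}Xe^{-is_nH}=X+s_nD+E_n$ with $\|E_nG_n^M\phi\|_{L^2}=O(s_n^2)$, but that argument is absent from the sketch. A smaller instance of the same scaling issue appears in your $t_\infty=0$ case: $(G_n^N)^{-1}$ amplifies $L^2$ norms by $N_n\sim M_n$ and weighted norms by more, so Duhamel with the stated bound $\|HG_n^M\phi\|_{L^2}\lesssim M_n^2$ (the correct scaling is $\lesssim M_n$) and $s_nM_n^2\to0$ does not by itself give vanishing of the $\Sigma$-norm; one again needs the commutator structure relative to the center $y_n$.
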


\begin{proof}

Write $(e^{-it_n^N H} G_n^N)^{-1} e^{-it_n^M H} G_n^M = (G_n^N)^{-1}
G_n^M (G_n^M) e^{-it_n H} G_n^M$ where $t_n = t_n^M - t_n^M$. As
$(G_n^N)^{-1} G_n^M$ converges strongly to the operator $f \mapsto
R_\infty^{\fr{d-2}{2}} f ( R_\infty \cdot + x_\infty)$, it suffices to
show that 
\begin{equation}
\label{eqn:strong_conv_reduced}
(G_n^M)^{-1} e^{-it_n H} G_n^M \to e^{-it_\infty (r_\infty)^2}
e^{\fr{it_\infty \Delta}{2}}.
\end{equation}

Recall from
Theorem~\ref{thm:fujiwara_propagator} that the phase in the Fourier
integral formula for $e^{-itH}$ is the classical action and has the form
\[
S(t, x, y) = \tfr{|x-y|^2}{2t} + t \omega(t, x, y), \quad \omega(t, \cdot,
\cdot) \in \mcal{B}_2.
\]
We first refine this asymptotic to identify the limit of the sequence
and then establish convergence using the oscillatory integral
theory of Section~\ref{subsection:fio}.

The leading terms of the action are obtained by replacing the
classical trajectories with straight lines
in the integral~\eqref{eqn:action_def}. Proceeding in the spirit of Fujiwara~\cite{fujiwara_fundamental_solution}, we have the following lemma.

\begin{lma}
\label{lma:refined_action}
Let $H(\xi, x) = \tfr{1}{2}|\xi|^2 + V(x)$ with $V$ subquadratic, and
let $S(t, x, y)$ be the action (which is well-defined for all $x$ and
$y$ so long as $|t| \le \delta_0$ where $\delta_0$ is the constant in Theorem~\ref{thm:fujiwara_propagator}). Then
\[
S(t, x, y) = \fr{|x-y|^2}{2t} - \int_0^t V(y+ (\tfr{x-y}{t}) \tau) \,
d\tau + O \Bigl (t^3 (1+|x|^2 + |y|^2) \Bigr ).
\]
\end{lma}

\begin{proof}
  The system \eqref{eqn:hamilton_flow} may be written equivalently as
  \begin{equation}
\label{eqn:integrated_hamilton_flow}
\begin{split}
\xi(t) &= \eta - \int_0^t \partial_x V (x(\theta)) \, d\theta, \\
x(t) &= y + \int_0^t \xi(\tau) d\tau = y + t\eta - \int_0^t
(t-\theta) \partial_x V (x(\theta) ) \, d\theta.
\end{split}
\end{equation}
As $\partial_xV$ grows at most linearly, Gronwall's inequality implies
that for all initial data $y, \eta$ we have
\begin{equation}
\nonumber
|x(t)| \le C (1 + |y| + |t\eta|).
\end{equation}
Fix a time $t > 0$ and positions $x, y \in \mf{R}^d$. By Proposition
\ref{prop:classical_trajectories}, there is a unique initial velocity
$\eta = \eta(t, x, y)$ such that the solution $(x(\tau), \xi(\tau) )$
to \eqref{eqn:integrated_hamilton_flow} satisfies $x(0) = y$ and $x(t) = x$.

Referring to the definition~\eqref{eqn:action_def} of the action, we
estimate the error incurred by replacing the true trajectory by the
straight line path from $y$ to $x$.
Rearranging the above expression for $x(t)$, we have 
\begin{equation}
\label{eqn:refined_action_initial_momentum}
\eta = \fr{x-y}{t} + \fr{1}{t}  \int_0^t (t - \theta) \partial_x
V(x(\theta)) \, d\theta.
\end{equation}
For $\tau$ between $0$ and $t$,
\[
|x(\tau)| \le |y| + \Bigl |\fr{x-y}{t} \tau \Bigr |  + C \int_0^t |t-\theta| (1 +
|x(\theta)|) \, d\theta,
\]
hence $|x(\tau)| \le C(1 + |x| + |y|)$.
The preceding computations reveal that
\[
\begin{split}
\Bigl |x(\tau) - y - \tau (\fr{x-y}{t}) \Bigr | &\le \fr{\tau}{t} \int_0^t |t-\theta|
|\partial_x V (x(\theta) )| \, d\theta + \int_0^\tau |\tau-\theta|
|\partial_x V (x(\theta))| \, d\theta\\
&\le C (\tau t + \tau^2) (1 + |x| + |y|).
\end{split}
\]
By the fundamental theorem of calculus, 
\begin{equation}
\label{eqn:refined_action_potential_error}
\begin{split}
\int_0^t |V(x(\tau)) - V(y + \tau (\tfr{x-y}{t}))| \, d \tau &\le C \int_0^t (
\tau t + \tau^2) (1 + |x| + |y|)^2 d\tau \\
&\le C t^3 (1 + |x| + |y|)^2.
\end{split}
\end{equation}

Next, by combining the first line of
\eqref{eqn:integrated_hamilton_flow} with
\eqref{eqn:refined_action_initial_momentum}, we find that
\[
\xi(\tau) = \fr{x-y}{t}  + \fr{1}{t} \int_0^t (t-\theta) \partial_x V
(x(\theta) ) \, d\theta - \int_0^\tau \partial_x V ( x(\theta) ) \, d\theta.
\]
It is easy to see that second and third terms are bounded by $O(t(1+|x|
+ |y|))$. Therefore,
\[
\begin{split}
\int_0^t \fr{1}{2} |\xi(\tau)|^2 \, d\tau &= \fr{|x-y|^2}{2t} +
\fr{x-y}{t} \int_0^t (t - \theta) \partial_x V (x(\theta)) \, d\theta
\\
&- \fr{x-y}{t} \int_0^t \int_0^\tau \partial_x V (x(\theta)) \, d\theta
d\tau  + O(t^3 (1+|x| + |y|)^2)\\
&= \fr{|x-y|^2}{2t} + O(t^3(1+|x|+|y|)^2).
\end{split}
\]
Combining this with \eqref{eqn:refined_action_potential_error} establishes
the lemma.
\end{proof}

By Theorem~\ref{thm:fujiwara_propagator} and a change of variable,
\begin{equation}
\label{eqn:strong_conv_kernel}
\begin{split}
(G_n^M)^{-1} e^{-it_n H} G_n^M f(x) 
= \left(\fr{\lambda_n}{2\pi i} \right)^{\fr{d}{2}} \int_{\mf{R}^d}
e^{i\lambda_n \phi_n(x, y)} a_n(x, y) f(y) \, dy,
\end{split}
\end{equation}
where
\begin{align*}
\lambda_n &= (M_n^2 t_n)^{-1}\\
a_n(x, y) &=  a(t_n, x_n + M_n^{-1} x, x_n + M_n^{-1} y)\\
\phi_n(x, y) &= \tfr{1}{2} |x-y|^2 + \lambda_n^{-1} t_n \omega(t_n, x_n
+ M_n^{-1} x, x_n + M_n^{-1} y)\\
&= \phi_0(x, y) + \lambda_n^{-1} t_n \omega_n(x, y).
\end{align*}
Theorem~\ref{thm:fujiwara_propagator} and 
Lemma~\ref{lma:refined_action} imply that these quantities
obey the following estimates:
\begin{equation}
\label{eqn:strong_conv_estimates}
\begin{split}
&t_n \omega_n(x, y) = -\int_0^{t_n} V(x_n +
M_n^{-1} y + \tfr{x-y}{M_n t_n}\tau) \, d\tau + O( t_n^3 (|x_n|^2 +
M_n^{-2} |x|^2 + M_n^{-2}|y|^2))\\
&= - t_n V_n(x_n) + O(
M_n^{-2} (1 + |x|^2 + |y|^2) ),\\
&|\nabla^k_{x, y} \omega_n (x, y)| \lesssim \left\{\begin{array}{cc}
M_n^{-1} (1 + |x_n + M_n^{-1} x| + |x_n + M_n^{-1} y|), & k = 1\\
M_n^{-k}, & k \ge 2\end{array}\right.\\
&|\nabla^m_{x, y} [a_n(x,y)-1] | \lesssim_k M_n^{-2-k} \quad \text{for all} \quad k
\ge 0.\\
\end{split}
\end{equation}

We need the following adaptation of \cite[Proposition~4.15]{fujiwara_fundamental_solution}.
\begin{lma}
\label{lma:strong_conv_unif_bddness}
The operators $(G_n^M)^{-1} e^{-it_n H} G_n^M$ are uniformly bounded on $\Sigma$.
\end{lma}

\begin{proof}
  Let $\chi_n: (y, -\partial_y \phi_n ) \mapsto (x, \partial_x \phi_n)$
be the canonical transformation
generated by the phase function $\phi_n$. In terms of the variables
$(y, \eta)$, we have
\[
\begin{split}
\chi_n(y, \eta) &= (y + \eta, \eta) + \lambda_n^{-1} t_n ( \partial_y
\omega_n, \partial_x \omega_n + \partial_y \omega_n)(x(t_n, y, \eta), y)\\
&= (y + \eta, \eta) + (r_{1, n}(y, \eta), r_{2, n}(y, \eta)).
\end{split}
\]

First we show that 
\begin{equation}
\label{eqn:strong_conv_eqn1}
\| \nabla (G_n^M)^{-1} e^{-it_n H} G_n^M f\|_{L^2} \lesssim \| f\|_{\Sigma}.
\end{equation}
Put $p(x, \theta, y) = \theta$ and $q_n(x, \theta, y) = \theta +
r_{2, n}(y, \theta)$. By construction,
\[
p(x, \xi, x)|_{(x, \xi) = \chi_n(y, \eta)} = q_n(y, \eta, y).
\]
By the representation \eqref{eqn:strong_conv_kernel} and Theorem~\ref{thm:egorov}, 
\begin{equation}
\label{eqn:strong_conv_commutator}
\begin{split}
D (G_n^M)^{-1} e^{-it_n H} G_n^M &=  \opn{Op}(\lambda_n p,  \lambda_n)
(G_n^M)^{-1} e^{-it_n H} G_n^M \\
&= (G_n^M)^{-1} e^{-it_n
  H} G_n^M \opn{Op}(\lambda_n q_n, \lambda_n) + R_n(\lambda_n),
\end{split}
\end{equation}
where we write $D = \tfr{1}{i} \nabla$. In light of the estimates \eqref{eqn:strong_conv_estimates} and
Theorem~\ref{thm:fujiwara_bddness}, it suffices to obtain a uniform bound
\begin{equation}
\nonumber
\|\opn{Op}(\lambda_n q_n, \lambda_n)\|_{ \Sigma \to L^2} \lesssim 1
\end{equation}
By definition
\[
\opn{Op}(\lambda_n q_n, \lambda_n)f (x) = Df + \opn{Op}(\lambda_n r_{2, n}, \lambda_n).
\]
Using \eqref{eqn:strong_conv_estimates} and Proposition
\ref{prop:classical_trajectories}, we see that
\[
\begin{split}
&\lambda_n r_{2, y}(y, \eta) = t_n(\partial_x\omega_n + \partial_y
\omega_n )(t_n, x(t_n, 0, 0), 0)\\
&+ t_n y \int_0^1 (\partial^2_{xy} \omega_n) (t_n, x(t_n, sy, s\eta), sy)
\tfr{\partial x}{\partial y} + (\partial^2_{y} \omega_n) (t_n, x(t_n, sy, s\eta), sy) \, ds\\
&+ t_n \eta \int_0^1 (\partial^2_{xy} \omega_n) (t_n, x(t_n, sy, s\eta), sy)
\tfr{\partial x}{\partial \eta} \, ds\\
&= c_n  + y r_{2,n}^1(y, \eta) + \eta r_{2,n}^2(y, \eta),
\end{split}
\]
where $|c_n| \lesssim M_n^{-2}$ and $\| D^k r_{2,n}^1 \|_{L^\infty}
\lesssim M_n^{-4}, \ \|
D^k r_{2,n}^2\|_{L^\infty} \lesssim M_n^{-6}$ for all $k$. Thus
\[
\begin{split}
&\opn{Op}(\lambda_n r_{2,n}, \lambda_n) = c_n I + \opn{Op}( y
r_{2,n}^1(y, \eta), \lambda_n ) + \opn{Op}(\eta r_{2,n}^2 (y, \eta), \lambda_n )\\
&= c_n I + \opn{Op}(r_{2,n}^1, \lambda_n) X +
\opn{Op}(\lambda_n^{-1} r_{2,n}^2,
\lambda_n) D + \opn{Op}(\lambda_n^{-1} (D_y r_{2,n}^{2}), \lambda_n).
\end{split}
\]
The Calder\'{o}n-Vaillancourt theorem now
implies 
\[
\| \opn{Op}(\lambda_n r_{2,n}, \lambda_n) f\|_{L^2} \lesssim
 M_n^{-2} \|f\|_{L^2} + M_n^{-4}\|xf\|_{L^2} + M_n^{-6} \|D f\|_{L^2} \lesssim M_n^{-2}\|f\|_{\Sigma}.
\]
Altogether we obtain \eqref{eqn:strong_conv_eqn1}. 

By setting $p(x, \theta, y) = x, \ q(x, \theta, y) = y + \theta
+ r_{1, n}(y, \eta)$ and making a similar analysis as above, we obtain
\[
\| x(G_n^M)^{-1} e^{-it_n H} G_n^M f\|_{L^2} \lesssim \| f\|_{\Sigma}.
\]
This concludes the proof of the lemma.
\end{proof}

We now verify the limit~\eqref{eqn:strong_conv_reduced}. As $e^{\fr{iM_n^2 t_n \Delta}{2}} \to
e^{\fr{it_\infty \Delta}{2}}$ strongly, it suffices to show that
\[
(G_n^M)^{-1} e^{-it_n H} G_n^M f  - e^{-i t_\infty (r_\infty)^2}
  e^{\fr{iM_n^2 t_n\Delta}{2}} f
\]
converges to $0$ for all $f \in \Sigma$.
By
Lemma \ref{lma:strong_conv_unif_bddness} we may assume $f
\in C^\infty_c$. 
The above difference may be written as
\[
\begin{split}
  \left(\tfr{\lambda_n}{2\pi
      i}\right)^{\fr{d}{2}} \int e^{i\lambda_n
    \phi_n} [a_n - 1] f(y) \, dy &+ \left( \tfr{\lambda_n}{2\pi i} \right)^{\fr{d}{2}} \int [
  e^{i\lambda_n \phi_n} -  e^{-it_\infty (r_\infty)^2}e^{ i \lambda_n \phi_0}] f(y) \, dy\\
    &= A_n f + B_n f.
\end{split}
\]
Using Theorem \ref{thm:fujiwara_bddness} and the estimates \eqref{eqn:strong_conv_estimates}, one argues as in the
proof of Lemma \ref{lma:strong_conv_unif_bddness} to see that
$\|A_nf\|_{\Sigma} \lesssim M_n^{-2}\|f\|_{\Sigma}$. 

It remains to bound $B_nf$. By hypothesis $f$ is supported in some
ball $B(0, R)$, and the estimates~\eqref{eqn:strong_conv_estimates}
show that the integral kernel of $B_n$ converges to $0$ in $C^\infty_{loc}$. It follows that $|xB_n f|$ and $|\nabla B_n
f|$ converge to $0$ locally uniformly. On the other hand, 
integration by parts reveals that for all $n$ sufficiently
large,
\[
|xB_nf| + |\nabla B_nf | \lesssim_N |x|^{-N}
\]
for any $N>0$ and for all $|x| \ge 4R$. Hence $\|B_nf\|_{\Sigma} \to
0$ by dominated convergence. This completes the proof of the
proposition.
\end{proof}

In the remainder of this section we collect other lemmata
regarding equivalent and orthogonal frames. They can be proved in much
the same manner as their counterparts in
\cite[Section~\ref{quadratic-subsection:convergence}]{me_quadratic_potential}.

\begin{cor}
\label{cor:strong_conv_cor}
Let $\{ (t_n^M, x_n, M_n, M_n')\}$ and $\{ t_n^N, y_n, N_n, N_n')\}$
be equivalent augmented frames. Let $S_n^M, \ S_n^N$ be the associated
spatial cutoff operators. Then

\begin{equation}
\label{eqn:strong_conv_cor_eqn1}
\lim_{n \to \infty} \| e^{-it_n^M H} G_n^M S_n^M \phi - e^{-it_n^N H}
G_n^N S_n^N U_\infty \phi
\|_{\Sigma} = 0
\end{equation}
and
\begin{equation}
\label{eqn:strong_conv_cor_eqn2}
\lim_{n\to \infty} \| e^{-it_n^M H} G_n^M S_n^M \phi - e^{-it_n^N H} G_n^N U_\infty S_n^N \phi
\|_{\Sigma} = 0
\end{equation}
whenever $\phi \in H^1$ if the frames conform to case
\ref{enum:aug_frame_2_1} and $\phi \in \dot{H}^1$ if they conform to
case \ref{enum:aug_frame_2_2} in Definition \ref{def:aug_frame}. 
\end{cor}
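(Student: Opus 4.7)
The strategy mirrors the one used in \cite{me_quadratic_potential}: reduce both equalities to Proposition~\ref{prop:strong_convergence} together with the strong convergence of the cutoffs $S_n^M, S_n^N \to I$ in the appropriate topology. Factoring $e^{-it_n^N H} G_n^N$ out of each difference, the two statements become estimates on
\[
e^{-it_n^N H} G_n^N [T_n S_n^M \phi - S_n^N U_\infty \phi] \quad \text{and} \quad e^{-it_n^N H} G_n^N [T_n S_n^M \phi - U_\infty S_n^N \phi],
\]
where $T_n := (e^{-it_n^N H} G_n^N)^{-1} e^{-it_n^M H} G_n^M$. Since $e^{-it_n^N H}$ is an isometry of $Q(H) \sim \Sigma$, and a direct computation (using the coercivity constraint $|y_n| \lesssim N_n$ from hypothesis \ref{enum:potential_hyp_2}) shows that $G_n^N$ maps $\dot{H}^1$ (resp.\ $H^1$) continuously into $\Sigma$, it suffices to show that the brackets tend to zero in $\dot{H}^1$ in case \ref{enum:aug_frame_2_2}, or in $H^1$ in case \ref{enum:aug_frame_2_1}.

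To prove the decay of the bracket, I would use the decomposition
\[
T_n S_n^M \phi - S_n^N U_\infty \phi = T_n(S_n^M - I)\phi + (T_n - U_\infty)\phi + (I - S_n^N) U_\infty \phi
\]
(and analogously for the second bracket with $U_\infty(I - S_n^N)\phi$ as the last term). The outer two terms vanish by the strong convergence $S_n \to I$ combined with uniform boundedness of $T_n$ and $U_\infty$ on the relevant space. For the middle term, approximate $\phi$ by Schwartz functions (dense in both $H^1$ and $\dot{H}^1$) and apply Proposition~\ref{prop:strong_convergence}, which yields $\Sigma$-convergence (hence the weaker $\dot{H}^1$ or $H^1$ convergence) on Schwartz data; uniform bounds on $T_n$ and $U_\infty$ then propagate this convergence to general $\phi$.

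The main obstacle is establishing uniform boundedness of the sequence $\{T_n\}$ on the correct space, since Proposition~\ref{prop:strong_convergence} provides this only on $\Sigma$. I would adapt the Egorov-type argument from Lemma~\ref{lma:strong_conv_unif_bddness}, conjugating $D$ (and if needed the multiplier $X$) through $T_n$ via Theorem~\ref{thm:egorov} and controlling the commutator errors using Theorem~\ref{thm:fujiwara_bddness}; the key input is that the symbols generated by $T_n$ differ from those of the canonical transformation generating $U_\infty$ by terms of order $M_n^{-2}$ or better. For statement (2), the additional task is to verify that $[U_\infty, S_n^N] \to 0$ in operator norm on $\dot{H}^1$; this is routine given the explicit representation of $U_\infty$ as a free Schr\"{o}dinger evolution composed with rigid scaling and translation, combined with the stabilization (case \ref{enum:aug_frame_2_1}) or compatible shrinking (case \ref{enum:aug_frame_2_2}) of the cutoff scale.
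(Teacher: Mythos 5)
Your reduction contains a step that fails, and it is precisely the step the cutoffs $S_n$ exist to repair. You claim that ``a direct computation \dots shows that $G_n^N$ maps $\dot{H}^1$ (resp.\ $H^1$) continuously into $\Sigma$,'' and you use this to replace $\Sigma$-smallness of the original difference by $\dot{H}^1$- or $H^1$-smallness of the bracket $T_n S_n^M\phi - S_n^N U_\infty\phi$. This is false: a function in $\dot H^1$, or even in $H^1$, need not satisfy $\|xf\|_{L^2}<\infty$, so $G_n^N f$ need not lie in $\Sigma$ at all. The scaling computation $\|x G_n^N f\|_{L^2}^2 = N_n^{-2}\int |y_n + N_n^{-1}z|^2 |f(z)|^2\,dz$ only becomes uniformly controlled by $\|f\|_{\dot H^1}$ (or $\|f\|_{H^1}$) when $f$ carries the spatial cutoff, because then the support restriction together with the augmented-frame constraints ($N_n^{-1}V(x_n)^{1/2}(N_n/N_n')\to 0$, $N_n^{1/2}\le N_n'$ in case \ref{enum:aug_frame_2_2}) bounds the weight. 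In other words, it is $G_n^N S_n^N$, not $G_n^N$, that is uniformly bounded from $\dot H^1$ (resp.\ $H^1$) into $\Sigma$, and this is exactly the content of the approximation machinery (Lemma~\ref{lma:approximation_lemma} and its fixed-time analogue) that the paper's proof invokes. The same defect infects your treatment of the middle term: for raw $\dot H^1$ data the operator $T_n=(e^{-it_n^N H}G_n^N)^{-1}e^{-it_n^M H}G_n^M$ is not just of unknown uniform bound, it is not even well defined, since $G_n^M\phi$ may fail to be in the form domain of $H$; and no Egorov-type upgrade of Lemma~\ref{lma:strong_conv_unif_bddness} can give a uniform $\dot H^1\to\dot H^1$ bound, because $e^{-itH}$ does not act on $\dot H^1$ --- any such estimate would require control of the weighted norm $\|V^{1/2}\phi\|_{L^2}$, which is unavailable.

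The workable (and the paper's) route keeps the cutoffs attached throughout the density argument rather than stripping them off in a reduction. For $\phi\in C_c^\infty$ one has $S_n^M\phi=\phi$ and $S_n^N U_\infty\phi = U_\infty\phi$ (resp.\ $U_\infty S_n^N\phi=U_\infty\phi$) for all large $n$, and Proposition~\ref{prop:strong_convergence} gives both limits directly in $\Sigma$. For general $\phi$ one approximates by $\psi\in C_c^\infty$ in $\dot H^1$ or $H^1$ and controls the error terms, which all have the form $e^{-it_n H}G_n S_n(\phi-\psi)$ or $e^{-it_n H}G_n S_n U_\infty(\phi-\psi)$, uniformly in $n$ in $\Sigma$, using the uniform $\dot H^1\to\Sigma$ (resp.\ $H^1\to\Sigma$) bounds for $G_nS_n$ furnished by the frame constraints together with the conservation of the $Q(H)$ norm; the commutator $[U_\infty,S_n^N]\to 0$, which you correctly identify as needed for \eqref{eqn:strong_conv_cor_eqn2}, is handled the same way. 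Your proposal, by discarding $S_n^N$ before estimating, has no mechanism to convert $\dot H^1$-smallness back into $\Sigma$-smallness, so the argument as written does not close.
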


\begin{proof}
Run an approximation argument using
Lemma~\ref{lma:approximation_lemma} in the manner of \cite[Corollary~\ref{quadratic-cor:strong_conv_cor}]{me_quadratic_potential}.
\end{proof}

The following ``approximate adjoint'' identity is the analogue
of~\cite[Lemma~\ref{quadratic-lma:approximate_adjoint}]{me_quadratic_potential}.
\begin{lma}
\label{lma:approximate_adjoint}
Suppose the frames $\{(t_n^M, x_n, M_n)\}$ and $\{ (t_n^N,
y_n, N_n)\}$ are equivalent. Put $t_n = t_n^M - t_n^N$. Then for $f, g
\in \Sigma$ we have
\[
 \langle (G_n^N)^{-1} e^{-it_n H} G_n^M f, g \rangle_{\dot{H}^1} = \langle 
f, (G_n^M)^{-1} e^{it_n H} G_n^N g \rangle_{\dot{H}^1} + R_{n} (f, g),
\]
where $|R_n (f, g)| \le C|t_n| \| G_n^M f\|_{\Sigma} \| G_n^N g\|_{\Sigma}$.  
\end{lma}

\begin{proof}
The proof of Lemma \ref{lma:strong_conv_unif_bddness} yields the
following commutator estimate:
\[
\| [D, e^{-itH}] \|_{\Sigma \to L^2} = O(t).
\]
We have
\[
\begin{split}
&\langle D(G_n^N)^{-1} e^{-it_n H} G_n^M f, D g \rangle_{L^2} = \langle Df, D(G_n^M)^{-1} e^{it_n H} G_n^N g\rangle_{L^2} + R_n(f, g)
\end{split}
\]
where $R_n(f, g) = \langle [D,
e^{-it_n H}] G_n^M f, D G_n^N g \rangle_{L^2} - \langle D G_n^M f, [D,
e^{it_n H} ] G_n^N g \rangle_{L^2}$. The claim then follows from
Cauchy-Schwarz and the above estimate.
\end{proof}

The next lemma is a converse to Proposition~\ref{prop:strong_convergence}.
\begin{lma}[Weak convergence]
\label{lma:weak_convergence}
Assume the frames $\mcal{F}^M = \{ (t_n^M, x_n, M_n)\}$ and $\mcal{F}^N
= \{ (t_n^N, y_n, N_n) \}$ are orthogonal. 
Then, for any $f \in \Sigma$,
\[
(e^{-it_n^N H} G_n^N)^{-1} e^{-it_n^M H} G_n^M f \to 0 \quad \text{weakly in} \quad \dot{H}^1.
\]

\end{lma}

\begin{proof}


  Put $t_n = t_n^M - t_n^N$, and suppose that $|M_n^2 t_n|
  \to \infty$. Then 
  \[
  \| (G_n^N)^{-1} e^{-it_n H} G_n^M
  f\|_{L^{\fr{2d}{d-2}}} \to 0
  \]
  for $f \in C^\infty_c$ by a change of
  variables and the dispersive estimate, thus for general $f \in
  \Sigma$ by a density argument. Therefore $(G_n^N)^{-1} e^{-it_n H}
  G_n^M f$ converges weakly in $\dot{H}^1$ to $0$. We consider next
  the case where $M_n^{2} t_n \to t_\infty \in \mf{R}$. The
  orthogonality of $\mcal{F}^M$ and $\mcal{F}^N$ implies that either
  $N_n^{-1} M_n$ converges to $0$ or $\infty$, or $M_n |x_n-y_n|$
  diverges as $n \to \infty$. In either case, one verifies easily that
  the operators $(G_n^N)^{-1} G_n^M$ converge to zero in the weak
  operator topology on $B(\dot{H}^1, \dot{H}^1)$. Applying
  Proposition~\ref{prop:strong_convergence}, we see that $(G_n^N)^{-1}
  e^{-it_n H} G_n^M f = (G_n^N)^{-1} G_n^M (G_n^M)^{-1} e^{-it_n H}
  G_n^M f$ converges to zero weakly in~$\dot{H}^1$.
\end{proof}

\begin{cor}
\label{cor:weak_conv_cor}
Let $\{(t_n^M, x_n, M_n, M_n')\}$ and $ \{ (t_n^N, y_n, N_n,
N_n')\}$ be augmented frames such that $\{(t_n^M, x_n, M_n)\}$ and
$\{(t_n^N, y_n, N_n)\}$ are orthogonal. Let $G_n^M, \ S_n^M$ and $G_n^N, S_n^N$ be the associated operators.
Then 
\[
(e^{-it_n^N H} G_n^N)^{-1} e^{-it_n^M H} G_n^M S_n^M \phi \rightharpoonup 0 \quad
\text{in} \quad \dot{H}^1
\]
whenever $\phi \in H^1$ if $\mcal{F}^M$ is of type
\ref{enum:aug_frame_2_1} and $\phi \in \dot{H}^1$ if $\mcal{F}^M$ is
of type \ref{enum:aug_frame_2_2}.
\end{cor}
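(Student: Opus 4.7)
The plan is to extend Lemma~\ref{lma:weak_convergence} --- which treats profiles in $\Sigma$ without the cutoff $S_n^M$ --- to the stated setting by a density argument, using uniform boundedness of the operator sequence to absorb profiles of lower regularity.

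The first step is to establish the uniform bound
\[
\sup_n \|(e^{-it_n^N H} G_n^N)^{-1} e^{-it_n^M H} G_n^M S_n^M \phi\|_{\dot{H}^1} \lesssim \|\phi\|_*,
\]
where $\|\cdot\|_* = \|\cdot\|_{H^1}$ when $\mcal{F}^M$ is of type~\ref{enum:aug_frame_2_1} and $\|\cdot\|_* = \|\cdot\|_{\dot{H}^1}$ when it is of type~\ref{enum:aug_frame_2_2}. Since $(G_n^N)^{-1}$ is an isometry on $\dot{H}^1$, $e^{\pm itH}$ preserves $\|H^{1/2} \cdot\|_{L^2}$, and $\|f\|_{\dot{H}^1} \lesssim \|H^{1/2} f\|_{L^2}$ by Proposition~\ref{prop:equivalence_of_norms}, this reduces to controlling $\|H^{1/2} G_n^M S_n^M \phi\|_{L^2}$, which is precisely the static analog of Lemma~\ref{lma:approximation_lemma} applied to a constant sequence.

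Next, pick a sequence $\phi_k \in C_c^\infty(\mf{R}^d) \subset \Sigma$ with $\phi_k \to \phi$ in $\|\cdot\|_*$ (possible since $d \ge 3$). For each fixed $k$, Lemma~\ref{lma:weak_convergence} applied to $\phi_k \in \Sigma$ gives $(e^{-it_n^N H} G_n^N)^{-1} e^{-it_n^M H} G_n^M \phi_k \rightharpoonup 0$ in $\dot{H}^1$ as $n \to \infty$, while the strong convergence $S_n^M \to I$ on $Q(H)$ from~\eqref{eqn:inv_strichartz_frameops_properties} combined with the same isometry/norm-equivalence chain as above yields
\[
\|(e^{-it_n^N H} G_n^N)^{-1} e^{-it_n^M H} G_n^M (S_n^M \phi_k - \phi_k)\|_{\dot{H}^1} \lesssim \|S_n^M \phi_k - \phi_k\|_{Q(H)} \to 0.
\]
Decomposing $S_n^M \phi = S_n^M(\phi - \phi_k) + (S_n^M \phi_k - \phi_k) + \phi_k$ and pairing with a test function $g \in \dot{H}^1$, the first summand contributes $O(\|\phi - \phi_k\|_*)$ uniformly in $n$ by the Step~1 bound, and the remaining two vanish as $n \to \infty$ for each fixed $k$. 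A standard $\varepsilon/k/n$ argument then concludes the weak convergence.

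The main technical point is the uniform bound of Step~1: one must verify that $G_n^M S_n^M$ maps the appropriate Sobolev space boundedly into $Q(H)$. This is exactly where the augmented-frame calibration $N_n^{-1} V(x_n)^{1/2} (N_n/N_n') \to 0$ and the subquadratic growth of $V$ enter --- precisely as they do in the proof of Lemma~\ref{lma:approximation_lemma} --- so the work is to trace through a chain of isometric and bounded operations rather than to prove anything substantively new.
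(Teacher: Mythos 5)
Your argument is correct and follows essentially the same route as the paper: reduce to $\phi \in C_c^\infty$, where Lemma~\ref{lma:weak_convergence} applies directly (the paper notes $S_n^M\phi = \phi$ for large $n$, while you invoke $S_n \to I$ in $Q(H)$ -- a trivial variation), and then run a density argument powered by the uniform $\dot H^1$ bound coming from the frame-operator mapping properties and Lemma~\ref{lma:approximation_lemma}, just as in the paper's appeal to the approximation argument of Corollary~\ref{cor:strong_conv_cor}. No substantive differences to report.
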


\begin{proof}
If $\phi \in C^\infty_c$, then $S^M_n \phi = \phi$ for all large $n$, and the claim
follows from Lemma~\ref{lma:weak_convergence}. The case of general $\phi$ in
$H^1$ or $\dot{H}^1$ then follows from an approximation argument
similar to the one used in the proof of
Corollary~\ref{cor:strong_conv_cor}. 
\end{proof}

\section{The case of concentrated initial data}
\label{section:concentrated_initial_data}

With the main complications out of the way, we sketch the
rest of the wellposedness argument in the remaining two sections. The
next step is to rule out blowup for
equation~\eqref{eqn:nls_approx_quadratic} when the initial data is
highly concentrated in space.

\begin{prop}
\label{prop:localized_initial_data}
Let $I = [-\delta_0/2, \delta_0/2]$, where  $\delta_0$ is the constant in
Theorem~\ref{thm:fujiwara_propagator}. Assume that
Conjecture~\ref{thm:nls_wo_potential_gwp} holds. Let
  \[
  \mcal{F} = \{ (t_n, x_n, N_n, N_n')\}
  \]
  be an augmented frame with $t_n \in I$ and $N_n \to \infty$, such
  that either $t_n \equiv 0$ or $N_n^2 t_n \to \pm \infty$; that is,
  $\mcal{F}$ is type \ref{enum:aug_frame_2_1} or
  \ref{enum:aug_frame_2_2} in Definition \ref{def:aug_frame}. Let
  $G_n, \tilde{G}_n$, and $S_n$ be the associated operators as defined
  in \eqref{eqn:inv_strichartz_frameops_1} and
  \eqref{eqn:inv_strichartz_frameops_2}. Suppose $\phi$ belongs to
  $H^1$ or $\dot{H}^1$ depending on whether $\mcal{F}$ is type
  \ref{enum:aug_frame_2_1} or \ref{enum:aug_frame_2_2}
  respectively. Then, for $n$ sufficiently large, there is a unique
  solution $u_n : I \times \mf{R}^d \to \mf{C}$ to the defocusing
  equation~\eqref{eqn:nls_approx_quadratic}, $\mu = 1$, with initial
  data
\[
u_n(0)=  e^{it_n H} G_n S_n \phi.
\]
This solution satisfies a spacetime bound
\[
\limsup_{n \to \infty} S_I(u_n) \le C( E(u_n)).
\]
Suppose in addition that $\{ (q_k, r_k) \}$ is any finite collection
of admissible pairs with $2 < r_k < d$. Then for each $\varepsilon >
0$ there exists $\psi^{\varepsilon} \in C^\infty_c(\mf{R} \times
\mf{R}^d)$ such that
\begin{equation}
\label{eqn:approximation_by_smooth_functions}
\limsup_{n \to \infty} \sum_k \| H^{1/2}(u_n - \tilde{G}_n [e^{-itN_n^{-2}V(x_n)}  \psi^{\varepsilon}]) \|_{L^{q_k}_t
  L^{r_k}_x (I \times \mf{R}^d)} < \varepsilon.
\end{equation}

Assuming also that $\| \nabla \phi\|_{L^2} < \| \nabla W\|_{L^2}$ and
$E_{\Delta}(\phi) < E_{\Delta}(W)$, we have the same conclusion as above for the
focusing equation~\eqref{eqn:nls_approx_quadratic}, $\mu =
-1$.
\end{prop}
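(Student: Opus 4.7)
\smallskip

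The plan is to construct $u_n$ by comparing it, via the stability theory of Proposition~\ref{prop:stability}, to an approximate solution obtained from a solution of the scale-invariant equation~\eqref{eqn:nls_wo_potential}. Because the data is concentrated in a ball of radius $\sim N_n^{-1}$ around $x_n$, on such a ball the potential $V$ is essentially the constant $V(x_n)$, and the only effect of this constant on the dynamics is to impose a global phase. More precisely, first use Conjecture~\ref{conjecture:nls_wo_potential_gwp} to produce a global solution $v$ to~\eqref{eqn:nls_wo_potential} with $S_\mathbb{R}(v) < \infty$: when $t_n \equiv 0$ take $v(0) = \phi$, and when $N_n^2 t_n \to \pm \infty$ take $v$ to be the solution that scatters to $e^{\fr{it\Delta}{2}} \phi$ as $t \to \mp \infty$ (existence is guaranteed by the scattering part of the conjecture; for type \ref{enum:aug_frame_2_1} one also verifies $v \in L^\infty_t L^2_x$ using that $\phi \in L^2$). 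In the focusing case, the hypotheses $E_\Delta(\phi) < E_\Delta(W)$ and $\|\nabla \phi\|_{L^2} < \|\nabla W\|_{L^2}$ are exactly what is needed to invoke the focusing part of the conjecture.

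Next, approximate $v$ in $L^\infty_t \dot{H}^1_x \cap L^{2(d+2)/(d-2)}_{t,x}$ by $\psi^\varepsilon \in C^\infty_c(\mathbb{R} \times \mathbb{R}^d)$, and define the candidate approximate solution
\[
\tilde{u}_n(t,x) = e^{-i(t - t_n) V(x_n)} \tilde{G}_n \psi^\varepsilon(t, x).
\]
A direct computation using the scaling identities for $\tilde{G}_n$ and the fact that $\psi^\varepsilon$ approximately solves~\eqref{eqn:nls_wo_potential} shows
\[
(i\partial_t - H) \tilde{u}_n - \mu |\tilde{u}_n|^{4/(d-2)} \tilde{u}_n = -\bigl(V(x) - V(x_n)\bigr)\tilde{u}_n + e^{-i(t-t_n)V(x_n)} \tilde{G}_n e_{\psi^\varepsilon},
\]
where $e_{\psi^\varepsilon}$ is the (small) error by which $\psi^\varepsilon$ fails to solve~\eqref{eqn:nls_wo_potential}. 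The term $\tilde{G}_n e_{\psi^\varepsilon}$ is handled by the approximation Lemma~\ref{lma:approximation_lemma} combined with Strichartz and the smallness of $e_{\psi^\varepsilon}$ (by density).

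The main work is bounding the \emph{potential error} $(V - V(x_n))\tilde{u}_n$ in $\|H^{1/2}(\cdot)\|_{N(I)}$. On the support of $\tilde{u}_n$ we have $|x - x_n| \lesssim N_n^{-1}$ times the size of the support of $\psi^\varepsilon$, so a Taylor expansion of $V$ about $x_n$ combined with the subquadratic estimate $|\nabla V(x_n)| \lesssim 1 + |x_n| \lesssim \max(1, N_n)$ and $\|D^2 V\|_\infty \lesssim 1$ gives a pointwise bound $|V - V(x_n)| \lesssim N_n^{-1}|x_n|\cdot \mathbf{1}_{\mathrm{supp}\,\tilde{G}_n \psi^\varepsilon} + N_n^{-2}(\cdot)$. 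One then uses the equivalence of norms (Proposition~\ref{prop:equivalence_of_norms}) to replace $H^{1/2}$ by $(-\Delta)^{1/2}$ and $V^{1/2}$, the fractional product rule (Corollary~\ref{cor:fractional_product_rule}) to distribute derivatives, and H\"older in time on the compact interval $I$; the smallness comes either from $N_n \to \infty$ (in the type~\ref{enum:aug_frame_2_2} case where $N_n^{-1} V(x_n)^{1/2} \to 0$, the relevant product $N_n^{-1}|x_n| \to 0$) or from the trivial estimate $N_n^{-1} V(x_n)^{1/2} \lesssim 1$ compensated by the shrinking spatial support in the $L^{r}_x$ norms when $r < \infty$. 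This is the step I expect to be the main obstacle, since the boundary case $r_\infty > 0$ requires careful bookkeeping, essentially using the $L^2$ component of the $\phi \in H^1$ data to avoid losing derivatives.

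To match initial data, one writes $u_n(0) = e^{it_n H} G_n S_n \phi$ while $\tilde{u}_n(t_n) = \tilde{G}_n \psi^\varepsilon(t_n)$; Corollary~\ref{cor:strong_conv_cor} applied to the frame $(t_n, x_n, N_n)$ compared with the trivial frame (in the sense there) shows that after rescaling these differ by an operator converging strongly to the identity on the appropriate Sobolev space, and then the density of $\psi^\varepsilon$ inside $v$ at the initial time (or in the scattering limit, if $N_n^2 t_n \to \pm\infty$) gives $\|H^{1/2}(u_n(0) - \tilde{u}_n(0))\|_{L^2} \to 0$. With the error controlled and the data matched, Proposition~\ref{prop:stability} yields the exact solution $u_n$ on $I$, the spacetime bound depending only on $E(u_n)$, and the refined approximation~\eqref{eqn:approximation_by_smooth_functions} by letting $\varepsilon \to 0$ after $n \to \infty$.
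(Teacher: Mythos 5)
Your overall strategy---compare $u_n$ via Proposition~\ref{prop:stability} to a rescaled, phase-modulated solution of the scale-invariant equation, with the potential error $(V - V(x_n))\tilde{u}_n$ controlled by Taylor expansion and the choice of $v$ (including the sign reversal when $N_n^2 t_n \to \pm\infty$ and the focusing hypotheses) --- is the correct starting point and agrees with the paper. But there is a genuine gap in how you build the approximate solution. You take $\tilde{u}_n = e^{-i(t-t_n)V(x_n)}\tilde{G}_n\psi^\varepsilon$ with $\psi^\varepsilon \in C^\infty_c(\mf{R}\times\mf{R}^d)$ and assert that the error $e_{\psi^\varepsilon}$ by which $\psi^\varepsilon$ fails to solve \eqref{eqn:nls_wo_potential} is small ``by density.'' It cannot be: after rescaling, the relevant time interval is $|s| \lesssim \delta_0 N_n^2 \to \infty$, so you need global-in-time control of $v$; and since $v$ scatters rather than decays in $\dot{H}^1$ (its kinetic energy stays bounded below by a constant depending on $E_\Delta(\phi) > 0$), any spacetime-compactly-supported approximant must switch the solution off, incurring an equation error of order one in the dual Strichartz norm --- if that error were small, free-NLS stability would force $\|v(t)\|_{\dot{H}^1}$ to be small beyond the time support of $\psi^\varepsilon$, a contradiction. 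Thus the hypothesis $\|H^{1/2}e\|_{N(I)} \le \varepsilon$ of the stability proposition is violated no matter how $\psi^\varepsilon$ is chosen. A second, related failure: when $N_n^2 t_n \to \pm\infty$, your $\tilde{u}_n(0)$ vanishes identically for large $n$ (the rescaled time $-N_n^2 t_n$ leaves the support of $\psi^\varepsilon$), whereas $u_n(0) = e^{it_n H}G_n S_n\phi$ has $\dot{H}^1$-norm comparable to $\|\phi\|_{\dot{H}^1}$, so the data-matching hypothesis fails outright and Corollary~\ref{cor:strong_conv_cor} cannot repair it.

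The paper's construction avoids both problems by using the nonlinear rescaled profile only inside the concentration window: for $|t - t_n| \le T N_n^{-2}$ the approximate solution is $e^{-i(t-t_n)V(x_n)}\tilde{G}_n[S_n P_{\le \tilde{N}_n'} v]$ (with a frequency cutoff $P_{\le \tilde{N}_n'}$ allowing an extra derivative in the error analysis), while for $T N_n^{-2} \le |t - t_n| \le \delta_0$ one propagates the edge states by the exact linear flow $e^{-itH}$. Because $v$ scatters, by rescaled time $\pm T$ its evolution is essentially free, so the gluing and exterior nonlinear errors vanish in the limit $T \to \infty$ after $n \to \infty$, and the data matching at $t = 0$ in the case $N_n^2 t_n \to \pm\infty$ follows from the scattering of $v$ combined with the comparison between $e^{-itH}$ and the rescaled free flow developed in Section~\ref{subsection:convergence}. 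Your analysis of the potential error inside the window is fine; the missing ingredient is this window-plus-linear-flow gluing, which is what makes the stability argument applicable on the whole unit-length interval $I$.
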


\begin{proof}[Proof sketch]
  We only give a rough idea as one can proceed just as in
  Proposition~\ref{quadratic-section:localized_initial_data} of
  \cite{me_quadratic_potential} and replace every instance of
  $\tfr{1}{2}|x_n|^2$ wih $V(x_n)$. The idea is to show that for $n$
  large enough, one can fashion a sufficiently accurate approximate
  solution $\tilde{u}_n$ on the interval $I$ in the sense of
  Proposition~\ref{prop:stability}, such that $S_I(\tilde{u}_n)$ are
  bounded. This bound will then be transferred to the exact solution
  $u_n$ by the stability theory.

  While $u_n$ remains highly concentrated (over time scales on the
  order of $N_n^{-2}$), it will be approximated by a modified
  solution to the scale-invariant
  equation~\eqref{eqn:nls_wo_potential} (whose solutions admit global
  spacetime bounds). By the time this
  approximation breaks down, the solution $u_n$ will have dispersed to
  such an extent that the evolution of $u_n$ is essentially
  linear.

  If $t_n \equiv 0$, let $v$ be the global
  solution to \eqref{eqn:nls_wo_potential} furnished by
  Conjecture~\ref{conjecture:nls_wo_potential_gwp} with $v(0) =
  \phi$. If $N_n^2 t_n \to \pm \infty$, let $v$ be the (unique)
  solution to~\eqref{eqn:nls_wo_potential} which scatters in
  $\dot{H}^1$ to $e^{\fr{it\Delta}{2}} \phi$ as $t \to \mp
  \infty$. Note the reversal of signs.

  The approximate solution is defined as follows. Let $\tilde{G}_n$
  and $S_n$ be the operators defined in
  \eqref{eqn:inv_strichartz_frameops_1} and
  \eqref{eqn:inv_strichartz_frameops_2}, and define for each $n$ a
  Littlewood-Paley cutoff
\[
P_{\le
  \tilde{N}_n'} = \varphi(-\Delta / (\tilde{N}_n')^2), \quad \tilde{N}_n' =
(\tfr{N_n}{N_n'})^{\fr{1}{2}},
\]
where $\varphi : \mf{R} \to \mf{R}$
denotes a smooth function equal to $1$ on the ball $B(0, 1)$ and
supported in $B(0, 1.1)$. Fix a large $T > 0$, and define
\begin{equation}
\label{eqn:localized_initial_data_v_n-tilde_defn}
\tilde{v}_n^T(t) = \left\{ \begin{array}{cc}  e^{ -itV(x_n)} \tilde{G}_n[
    S_n P_{\le \tilde{N}_n'} v] (t+t_n)  & |t| \le TN_n^{-2}\\
e^{-i(t-TN_n^{-2}) H} \tilde{v}_n^T(TN_n^{-2}), & TN_n^{-2} \le t \le \delta_0\\
e^{-i(t+TN_n^{-2})H} \tilde{v}_n^T(-TN_n^{-2}), & -\delta_0 \le t \le -TN_n^{-2}
\end{array}\right..
\end{equation}

Inside the ``window of concentration'', $\tilde{v}_n^T$ is
essentially a modulated solution to \eqref{eqn:nls_wo_potential} with cutoffs
applied in both space, to place the solution in $C_t \Sigma_x$, and
frequency, to enable taking an extra derivative in the error analysis
for the stability theory. The time translation by $t_n$ is needed to undo
the time translation built into the operator $\tilde{G}_n$; see
\eqref{eqn:inv_strichartz_frameops_1}. 

Essentially the same computations as
in~\cite{me_quadratic_potential} yield the estimate
\begin{gather*}
\limsup_n \| H^{1/2} \tilde{v}_n^T \|_{L^\infty_t L^2_x([-\delta_0, \delta_0])}
+ \| \tilde{v}_n^T \|_{L^{\fr{2(d+2)}{d-2}}_{t,x}([-\delta_0,
  \delta_0] \times \mf{R}^d)} \lesssim C(\| \phi\|_{\dot{H}^1}),
\end{gather*}
uniformly in $T$; one also sees that
\begin{gather*}
\lim_{T \to \infty} \limsup_{n} \| H^{1/2}[ (i\partial_t -
H)(\tilde{v}_n^T) - F(\tilde{v}_n^T)] \|_{N([-\delta_0, \delta_0])} =
0,
\end{gather*}
where $F(z) = \mu |z|^{\fr{4}{d-2}} z$ is the nonlinearity. 
\[
\lim_{T \to \infty} \limsup_{n} \|H^{1/2}[ \tilde{v}_n^T (-t_n) -
u_n(0)] \|_{L^2_x} = 0.
\]
Thus, for some fixed large $T$ and all large $n$,
$\tilde{u}_n (t, x) := \tilde{v}_n^T(t - t_n, x)$ is an approximate
solution on the time interval $[-\delta_0/2, \delta_0/2]$ in the sense
of Proposition~\ref{prop:stability}. Thus one obtains the first part
of Proposition~\ref{prop:localized_initial_data}. The last claim
regarding approximation by smooth functions is proven by applying
Lemma~\ref{lma:approximation_lemma} to the functions $\tilde{v}_n^T$
in the manner of
\cite[Lemma~\ref{quadratic-lma:approximation_by_smooth_functions}]{me_quadratic_potential}. 
\end{proof}

\section{A compactness property for blowup sequences}
\label{section:palais-smale}

In this section we give a Palais-Smale condition on blowup sequences
of solutions to \eqref{eqn:nls_approx_quadratic}. 
This will quickly lead to the proof of Theorem~\ref{thm:nls_approx_quadratic_gwp}. 

For a maximal solution $u$ to \eqref{eqn:nls_approx_quadratic}, 
define 
\[
S_*(u) = \sup \{ S_I(u) : I \text{ is an open interval with } |I| \le 1\},
\]
where we set $S_I(u) = \infty$ if $u$ is not 
defined on $I$.
Set 
\[
\begin{split}
\Lambda_d(E) &= \sup \{ S_*(u) : u \text{ solves \eqref{eqn:nls_approx_quadratic}, } \mu = +1, \ E(u) = E\}\\
\Lambda_f(E) &= \sup \{ S_*(u) : u \text{ solves
  \eqref{eqn:nls_approx_quadratic}, } \mu = -1, \ E(u) = E, \\
&\| \nabla u(0)\|_{L^2} < 
\| \nabla W \|_{L^2}\}.
\end{split}
\]
Finally, define
\[
\begin{split}
\mcal{E}_d &= \{ E : \Lambda_d(E) < \infty\}, \quad
\mcal{E}_f 
= \{ E : \Lambda_f(E) < \infty\},
\end{split}
\]
By the local theory,
Theorem~\ref{thm:nls_approx_quadratic_gwp} is equivalent to the
assertions
\[
\mcal{E}_d = [0, \infty), \quad \mcal{E}_f = [0, E_{\Delta}(W) ).
\]

Suppose Theorem~\ref{thm:nls_approx_quadratic_gwp} failed. By the small data theory, 
$\mcal{E}_d, \ \mcal{E}_f$ are nonempty and open, and the failure of 
Theorem~\ref{thm:nls_approx_quadratic_gwp} implies the existence of a critical 
energy $E_c > 0$, with $E_c < E_{\Delta}(W)$ in the focusing case such 
that 
$\Lambda_d(E), \ \Lambda_f (E) = \infty$ for $E > E_c$ and
$\Lambda_d(E), \ \Lambda_f(E) < \infty$ for all $E < E_c$. 
We have the following compactness property.


\begin{prop}[Palais-Smale]
\label{prop:palais-smale}
Assume Conjecture~\ref{thm:nls_wo_potential_gwp} holds. Suppose that $u_n :
(t_n - \delta_0, t_n + \delta_0 ) \times \mf{R}^d \to \mf{C}$ is a
sequence of solutions with
\[
\lim_{n \to \infty} E(u_n) =E_c, \ \lim_{n \to \infty} S_{(t_n-\delta_0, t_n]} (u_n) = 
\lim_{n \to \infty} S_{ [t_n, t_n+\delta_0)} (u_n) = \infty,
\]
where $\delta_0$ is the constant in
Theorem~\ref{thm:fujiwara_propagator}. In the focusing case, assume
also that $E_c < E_{\Delta}(W)$ and $\| \nabla u_n (t_n)\|_{L^2} < \|
\nabla W\|_{L^2}$. Then there exists a subsequence such that
$u_n(t_n)$ converges in $ Q(H)$.
\end{prop}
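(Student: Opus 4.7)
The plan is to apply the linear profile decomposition to the sequence $\{u_n(t_n)\}$, then show that in the limit exactly one profile survives, that this profile is of type \ref{enum:aug_frame_1} (so the frame is trivial: $N_n \equiv 1$, $x_n \equiv 0$, $t_n \equiv 0$), and that the remainder vanishes in $Q(H)$. After translating time I may assume $t_n \equiv 0$. Passing to a subsequence and invoking Proposition~\ref{prop:lpd} on $I = [-\delta_0, \delta_0]$ gives
\[
u_n(0) = \sum_{j=1}^J \phi_n^j + r_n^J, \qquad \phi_n^j = e^{it_n^j H} G_n^j S_n^j \phi^j,
\]
with asymptotic decoupling of the $Q(H)$- and $L^{2d/(d-2)}$-norms, pairwise orthogonality of the frames, and vanishing of $e^{-itH} r_n^J$ in $L^{2(d+2)/(d-2)}_{t,x}$ as $J \to J^*$.

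Next I would associate to each profile a nonlinear evolution $v_n^j$ on $I$: for a type \ref{enum:aug_frame_1} profile I take the solution to~\eqref{eqn:nls_approx_quadratic} with data $\phi^j$, and for a type \ref{enum:aug_frame_2} profile I invoke Proposition~\ref{prop:localized_initial_data}. Assume for contradiction that more than one profile carries positive energy, or that a single profile carries energy strictly below $E_c$. Then $Q(H)$-decoupling and the definition of $E_c$ yield uniform spacetime bounds on each $v_n^j$: for type \ref{enum:aug_frame_1} profiles this is the induction hypothesis below $E_c$, and for type \ref{enum:aug_frame_2} profiles it is Proposition~\ref{prop:localized_initial_data}. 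In the focusing case the kinetic-energy hypothesis transfers to each $\phi^j$ by decoupling together with $E_c < E_\Delta(W)$. I would then form the approximate solution
\[
\tilde u_n(t) = \sum_{j \le J} v_n^j(t) + e^{-itH} r_n^J
\]
and verify the hypotheses of Proposition~\ref{prop:stability}. Pairwise asymptotic orthogonality of distinct $v_n^j$ in the spacetime norms that appear in the nonlinear error $(i\partial_t - H)\tilde u_n - F(\tilde u_n)$ follows from Corollary~\ref{cor:weak_conv_cor} together with \eqref{eqn:approximation_by_smooth_functions}, which reduces estimates on type \ref{enum:aug_frame_2} profiles to decoupling statements for smooth compactly supported functions in the scale-invariant setting. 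Stability then produces $\limsup_n S_I(u_n) < \infty$, contradicting $S_{[0,\delta_0)}(u_n) \to \infty$.

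It follows that $J^* = 1$, the lone profile $\phi_n^1$ carries the entire critical energy, and $\|H^{1/2} r_n^1\|_{L^2} \to 0$. If that surviving profile were of type \ref{enum:aug_frame_2}, Proposition~\ref{prop:localized_initial_data} applied directly to $u_n$ would again produce a uniform bound on $S_I(u_n)$, a contradiction. Hence the surviving profile is of type \ref{enum:aug_frame_1}, so $\phi_n^1 = \phi$ for some $\phi \in Q(H)$ and
\[
u_n(0) - \phi = r_n^1 \longrightarrow 0 \quad \text{strongly in } Q(H).
\]

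The hard part will be the nonlinear decoupling used in the second paragraph: one must verify that products of distinct nonlinear profiles $v_n^j$ and $v_n^k$ are negligible in the $H^{1/2}$-weighted dual Strichartz norm controlling the cross terms of $F(\tilde u_n)$. This is where the approximation clause \eqref{eqn:approximation_by_smooth_functions} and the convergence lemma Corollary~\ref{cor:strong_conv_cor} do the essential work: they reduce the estimate to a decoupling computation between smooth, compactly supported substitutes placed in orthogonal frames, which can be handled by explicit change of variables just as in~\cite{me_quadratic_potential}.
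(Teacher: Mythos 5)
Your outline is the standard profile-decomposition plus nonlinear-profiles plus stability argument, which is essentially the same route the paper takes: its own ``proof'' simply defers to the presentation following the Palais--Smale proposition in \cite{me_quadratic_potential}, exactly where you also send the hard cross-term decoupling. The only ingredients the paper singles out that you leave implicit are the local smoothing estimate for $e^{-itH}$ used in that decoupling step, and the energy-trapping argument in the focusing case, which is needed both to get $Q(H)$-boundedness of $u_n(t_n)$ before invoking Proposition~\ref{prop:lpd} and to convert the vanishing energy of the remainder into vanishing $Q(H)$-norm; both are contained in the citation you rely on.
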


\begin{proof}
We refer to the presentation following Proposition~\ref{quadratic-prop:palais-smale}
in~\cite{me_quadratic_potential}. The
proof uses a local smoothing estimate for the propagator $e^{-itH}$,
which can be obtained via a multiplier argument just as in
Corollary~\ref{quadratic-cor:local_smoothing_cor}
of~\cite{me_quadratic_potential}. In the focusing case, one also uses
energy trapping arguments (see
Section~\ref{quadratic-section:focusing_blowup} of
\cite{me_quadratic_potential}) to see that the hypotheses are in
fact equivalent to $\| H^{1/2} u_n(t_n) \|_{L^2} < \| \nabla W\|_{L^2}$.
\end{proof}

\begin{proof}[Proof of Theorem~\ref{thm:nls_approx_quadratic_gwp}]
  Suppose the theorem failed, and let $E_c$ be as above. Then, after
  applying suitable time translations, there is
  a sequence of solutions $u_n$ with $ E(u_n) \to E_c$ and $
  S_{(-\delta_0 / 4, \delta_0/4)}(u_n) \to \infty.  $ Choose $t_n$
  such that $S_{(-\delta_0 / 4, t_n)} (u_n) = \tfr{1}{2} S_{(-\delta_0
    / 4, \delta_0 / 4)} (u_n)$. By
  Proposition~\ref{prop:palais-smale}, after passing to a subsequence
  we have $\| u(t_n) - \phi\|_{\Sigma} \to 0$ for some $\phi \in
  \Sigma$. Then $E(\phi) = \lim_{n} E(u_n(t_n)) = E_c$.

  Let $v: (-T_{min}, T_{max}) \to \mf{C}$ be the maximum-lifespan solution
  to~\eqref{eqn:nls_approx_quadratic} with $v(0) = \phi$. By comparing
  $v(t,x)$ with the solutions $u_n(t + t_n, x)$ and applying
  Proposition~\ref{prop:stability}, we see that $S_{(0, \delta_0 /
    2)}(v) = S_{(-\delta_0/2, 0)}(v) = \infty$. Thus $-\delta_0/2 \le
  -T_{min} < T_{max} \le \delta_0/2$. But the orbit $\{v(t)\}_{t \in
    (-T_{min}, T_{max})}$ is a precompact subset of $\Sigma$, by
  Proposition~\ref{prop:palais-smale}, so there is some sequence of
  times $t_n$ increasing to $T_{max}$ such that $v(t_n)$ converges in
  $\Sigma$ to some $\psi$. By considering a local solution with
  initial data $\psi$ and invoking stability theory, we see that $v$
  can actually be extended to some larger interval $(-T_{min}, T_{max}
  + \eta)$, in contradiction to the maximality of $v$.
\end{proof}

\bibliographystyle{abbrv}

\bibliography{bibliography}

\end{document}